\documentclass[12pt,a4paper]{amsart}
\usepackage{amsmath,amsfonts,amssymb,amsthm}
\usepackage{dsfont,mathrsfs}
\usepackage{curves,epic,eepic}

\setlength{\topmargin}{0cm}
\setlength{\textwidth}{15cm}
\setlength{\oddsidemargin}{0.5cm}
\setlength{\evensidemargin}{0.5cm}
\setlength{\unitlength}{1mm}
\setlength{\parskip}{0em}

\newtheorem{thm}{Theorem}[section]
\newtheorem{lemma}[thm]{Lemma}
\newtheorem{prop}[thm]{Proposition}
\newtheorem{cor}[thm]{Corollary}
\theoremstyle{definition}
\newtheorem{defi}[thm]{Definition}
\newtheorem{rem}[thm]{Remark}
\newtheorem{ex}[thm]{Example}
\newtheorem{con}[thm]{Construction}

\def\Z{\mathds Z}
\def\Q{\mathds Q}
\def\C{\mathds C}
\def\phi{\varphi}
\def\id{{\it id}}
\def\A{\mathds A}
\def\P{\mathds P}
\def\G{\mathds G}
\def\O{\mathcal O}

\DeclareMathOperator{\Spec}{Spec}
\DeclareMathOperator{\rk}{rk}
\DeclareMathOperator{\Desc}{Desc}

\begin{document}

\title[Generalisations of Losev-Manin moduli spaces]
{On generalisations of Losev-Manin moduli spaces
for classical root systems}

\author[Victor Batyrev]{Victor Batyrev}
\address{Mathematisches Institut, Universit\"at T\"ubingen,\newline
Auf der Morgenstelle 10, 72076 T\"ubingen, Germany}
\email{batyrev@everest.mathematik.uni-tuebingen.de}

\author[Mark Blume]{Mark Blume}
\thanks{The second author was supported 
by DFG-Schwerpunkt 1388 Darstellungstheorie.}
\address{Mathematisches Institut, Universit\"at M\"unster,\newline
Einsteinstrasse 62, 48149 M\"unster, Germany}
\email{mark.blume@uni-muenster.de}

\dedicatory{In memory of Eckart Viehweg}

\begin{abstract}
Losev and Manin introduced fine moduli spaces $\overline{L}_n$ 
of stable $n$-pointed chains of projective lines. The moduli 
space $\overline{L}_{n+1}$ is isomorphic to the toric variety 
$X(A_n)$ associated with the root system $A_n$, which is part of 
a general construction to associate with a root system $R$ of 
rank $n$ an $n$-dimensional smooth projective toric variety $X(R)$. 
In this paper we investigate generalisations of the Losev-Manin
moduli spaces for the other families of classical root systems.
\end{abstract}

\vspace*{-6mm}

\maketitle

\thispagestyle{empty}

\enlargethispage{5mm}

\section*{Introduction}

In \cite{LM00} Losev and Manin introduced fine moduli spaces
$\overline{L}_n$ of stable $n$-pointed chains of projective lines.
These Losev-Manin moduli spaces are similar to the moduli spaces
$\overline{M}_{0,n+2}$, but whereas $\overline{M}_{0,n+2}$
parametrises trees of projective lines with $n+2$ marked points
that are not allowed to coincide, the moduli space $\overline{L}_n$
parametrises chains of projective lines with two poles and
$n$ marked points that may coincide.

The Losev-Manin moduli space $\overline{L}_{n+1}$ has the structure
of an $n$-dimensional smooth projective toric variety such that the
boundary divisors parametrising reducible curves correspond to the
torus invariant divisors; it coincides with the toric variety $X(A_n)$
associated with the root system $A_n$. This is part of a general
construction to associate with a root system $R$ of rank $n$ an
$n$-dimensional smooth projective toric variety $X(R)$ (\cite{Kl85},
\cite{Pr90}). In the introduction to \cite{LM00} the authors asked
about generalisations of the moduli spaces $\overline{L}_n$ for the
other families of classical root systems. In the present paper
we address this problem.

Concerning the family of root systems of type $B$ we present a
variant of the Losev-Manin moduli problem by considering chains
of projective lines of odd length with an involution permuting
the two poles having one marked point $s_0$ invariant under
the involution and $n$ pairs of marked points $s_i^\pm$ that
are interchanged by the involution.
We show that these pointed curves admit a fine moduli space
$\overline{L}_n^{0,\pm}$ which is isomorphic to the toric variety
$X(B_n)$ such that the boundary divisors of the moduli space get
identified with the torus invariant divisors.

It is well known that for the Losev-Manin moduli spaces,
as for the moduli spaces $\overline{M}_{0,n}$, the universal
curve over $\overline{L}_{n+1}$ is the next moduli space
$\overline{L}_{n+2}$ together with a natural forgetful morphism
$\overline{L}_{n+2}\to\overline{L}_{n+1}$.
In \cite{BB11} we developed functorial properties of the
toric varieties $X(R)$ with respect to maps of root systems
and observed that this morphism $\overline{L}_{n+2}\!=\!X(A_{n+1})
\,\to\,\overline{L}_{n+1}\!=\!X(A_{n})$ is induced by the inclusion
of root systems $A_{n}\to A_{n+1}$. Furthermore, the $n\!+\!1$ sections
$X(A_{n})\to X(A_{n+1})$ come from  projections of root systems
$A_{n+1}\to A_{n}$ along the $n+1$ additional pairs of opposite
roots in $A_{n+1}$ not contained in $A_{n}$.

\pagebreak

All this generalises to the family of root systems of type $B$:
the morphism $X(B_{n+1})\to X(B_n)$ coming from the inclusion
of root systems $B_n\to B_{n+1}$ is flat and its fibres have
the structure of chains of projective lines of odd length.
The $2n+1$ additional pairs of opposite roots in $B_{n+1}$ give
$2n+1$ sections. There is a symmetry of $B_{n+1}$ fixing $B_n$
which induces an involution $I$ of $X(B_{n+1})$ over $X(B_n)$
such that the sections are grouped into $n$ pairs of sections
$s_i^\pm$ interchanged by the involution and one section $s_0$
invariant under the involution. We show that $X(B_{n+1})\to X(B_n)$
together with these sections and the involution $I$ forms the
universal family over the fine moduli space
$\overline{L}_n^{0,\pm}=X(B_n)$.
On the other hand, we will see that the toric varieties $X(R_n)$
for $R=C,D$ do not form fine moduli spaces of pointed reduced
curves having $X(R_{n+1})\to X(R_n)$ as universal family.

In the case of root systems of type $C$ the morphism
$X(C_{n+1})\to X(C_n)$ is flat with one-dimensional fibres
having the structure of $2n$-pointed chains of projective lines
of odd length with involution except that over a certain torus
invariant divisor nonreduced components occur.
On the one hand we can consider families of pointed curves as in
the $B_n$-case but without the section $s_0$ and thereby allowing
an additional involution as isomorphism. This gives rise to
a toric Deligne-Mumford stack $\mathcal X(C_n)$ which is an
orbifold having the toric variety $X(C_n)$ as coarse moduli
space with stacky points over the divisor determined by the
nonreduced fibres.
On the other hand we can describe $X(C_n)$ as a fine moduli
space $\overline{L}_n^\pm$ of $2n$-pointed chains of projective lines
of odd and even length with involution with each of the marked points
corresponding to a pair of opposite roots in $C_{n+1}\setminus C_n$
that defines a projection $C_{n+1}\to C_n$.
The universal family arises from $X(C_{n+1})\to X(C_n)$ by
contracting the nonreduced components in the fibres.

In the case of the remaining family of root systems of type $D$
the morphism $X(D_{n+1})\to X(D_n)$ is not flat. There are
$2$-dimensional fibres that occur over closures of certain torus
orbits of codimension $2$, over the other points as fibres we have
$2n$-pointed chains of projective lines with involution.

We observe that in the cases of all families of root systems
$R=A,B,C,D$ the torus fixed points of $X(R_n)$ correspond to
pointed curves having the form of the Dynkin diagram for the
root system $R_{n+1}$.

\medskip\enlargethispage{13mm}

{\bf Outline of the paper.} In the first sections 1--5 we deal
with the case of root systems of type $B$.
In section \ref{sec:moduliprobl} we formulate a moduli problem
of $(2n+1)$-pointed chains of projective lines called $B_n$-curves,
which is a variant of the Losev-Manin moduli problem.
In section \ref{sec:X(Bn)} we collect some facts about the toric
varieties $X(B_n)$ associated with root systems of type $B$.
Section \ref{sec:univcurve} is about the morphism
$X(B_{n+1})\to X(B_n)$, which, together with its sections and the
involution, forms a flat family of $B_n$-curves, and in section
\ref{sec:modspace} we prove that the toric variety
$\overline{L}_n^{0,\pm}\!=\!X(B_n)$ is a fine moduli space of $B_n$-curves
with universal family $X(B_{n+1})\to X(B_n)$.
To show that the moduli functor of $B_n$-curves is isomorphic
to the functor of the toric variety $X(B_n)$ we use the description
of the functor of toric varieties associated with root systems
given in \cite[1.3]{BB11}; our proof is a variation of our new
proof of the respective statement for root systems of type $A$
given in \cite[3.3]{BB11}.
In section \ref{sec:cohomBn} we present some results on the
(co)homology of the spaces $\overline{L}_n^{0,\pm}\!=\!X(B_n)$,
giving descriptions similar to the case of the Losev-Manin moduli
spaces $\overline{L}_{n+1}\!=\!X(A_n)$.

In the remaining sections \ref{sec:C_n} and \ref{sec:D_n} the cases
of the root systems of type $C$ and $D$ are investigated.

\pagebreak
\section{Pointed chains of projective lines with involution}
\label{sec:moduliprobl}

\begin{defi}
A {\sl chain of projective lines} of length $m$ over an algebraically
closed field $K$ is a projective curve $C=C_1\cup\ldots\cup C_m$
over $K$ such that every irreducible component $C_j$ of $C$ is a
projective line with poles $p^-_j,p^+_j$ and these components intersect
as follows: different components $C_i$ and $C_j$ intersect only if
$|i-j|=1$ and in this case $C_j,C_{j+1}$ intersect transversally in
the single point $p^+_j=p^-_{j+1}$.
For $p^-_1\in C_1$ and $p^+_m\in C_m$ we write $s_-$ and $s_+$.
Two chains of projective lines $(C,s_-,s_+)$ and $(C',s_-',s_+')$
are called isomorphic if there is an isomorphism $\phi\colon C\to C'$
such that $\phi(s_-)=s_-'$, $\phi(s_+)=s_+'$.
\end{defi}

\begin{defi}\label{def:chaininv}
A {\sl chain of projective lines with involution} $(C,I,s_-,s_+)$ is
a chain of projective lines together with an isomorphism $I\colon C\to C$
such that $I^2=\id_C$ and $I(s_-)=s_+$.
In this case we use the following notation: if the chain has odd length
denote by $(C_0,p^-_0,p^+_0)$ the central component; denote by
$(C_j,p^-_j,p^+_j)$, $(C_{-j},p^-_{-j},p^+_{-j})$
the pairs of $I$-conjugate components (i.e.\ $I(C_j)=C_{-j}$,
$I(p^-_j)=p^+_{-j}$, $I(p^+_j)=p^-_{-j}$) such that
$p^+_j=p^-_{j+1}$, $p^-_{-j}=p^+_{-(j+1)}$ and in case of odd lenght
$p^+_0=p^-_1$, $p^-_0=p^+_{-1}$ whereas in case of even length
$p^+_{-1}=p^-_1$. In particular, we have $s_-=p^-_{-m}$, $s_+=p^+_m$
if the  chain has length $2m$ or $2m+1$.
Two chains of projective lines with involution $(C,I,s_-,s_+)$ and
$(C',I',s_-',$ $s_+')$ are called isomorphic if there is an isomorphism
of chains of projective lines $\phi\colon(C,s_-,s_+)\to(C',s_-',s_+')$
such that $\phi\circ I=I'\circ\phi$.
\end{defi}

In the following we are concerned with certain compactifications
of the algebraic torus $(2\G_m)^n$ parametrising $n$ pairs
of points of the form $(z,\frac{1}{z})$ in $(\G_m,1)\subset
(\P^1,0,\infty,1)$, i.e.\ pairs of points which are interchanged
by the involution of $\P^1$ that fixes the point $1$ and interchanges
the two poles $0$ and $\infty$.
These compactifications, which will be associated with root systems,
parametrise isomorphism classes of certain pointed chains of projective
lines with an involution.
We now define the type of pointed curve which will be relevant
in the case of root systems of type $B$.

\begin{defi}
A {\sl $(2n\!+\!1)$-pointed chain of projective lines with involution}
$(C,I,s_-,$ $s_+,s_0,s_1^\pm,\ldots,s_n^\pm)$ is a chain of
projective lines with involution $(C,I,s_-,s_+)$ of odd length
together with (possibly coinciding) marked points $s_0,s_i^\pm\in C$
different from the poles such that $I(s_0)=s_0$, $I(s_i^-)=s_i^+$. 
Two $(2n\!+\!1)$\,-\,pointed chains of projective lines with involution
$(C,I,s_-,s_+,s_0, s_1^\pm,\ldots,s_n^\pm)$ and
$(C',I',s_-',s_+',s_0',{s_1'}^\pm,\ldots,{s_n'}^\pm)$ are called
isomorphic if there is an isomorphism $\phi\colon
(C,I,s_-,s_+)\to(C',I',s_-',s_+')$ of the underlying chains of
projective lines with involution such that $\phi(s_0)=s_0'$,\linebreak
$\phi(s_j^\pm)={s_j'}^\pm$. A $(2n+1)$-pointed chain of projective
lines with involution 
$(C,I,s_-,s_+,\linebreak s_0,s_1^\pm,\ldots,s_n^\pm)$ 
is called {\sl stable} if each component of $C$ contains at least 
one of the points $s_0,s_j^\pm$. A {\sl $B_n$-curve} over an
algebraically closed field $K$ is a stable $(2n+1)$-pointed chain of
projective lines over $K$.
\end{defi}

\medskip
\noindent
\begin{picture}(150,25)(0,0)

\put(75,6){\line(-1,0){20}}\put(75,6){\line(1,0){20}}

\put(97,12){\line(-3,-2){13}}\put(97,12){\line(3,2){13}}
\put(115,12){\line(-3,2){13}}\put(115,12){\line(3,-2){13}}
\put(133,12){\line(-3,-2){13}}\put(133,12){\line(3,2){13}}

\put(53,12){\line(-3,2){13}}\put(53,12){\line(3,-2){13}}
\put(35,12){\line(-3,-2){13}}\put(35,12){\line(3,2){13}}
\put(17,12){\line(-3,2){13}}\put(17,12){\line(3,-2){13}}

\filltype{white}
\put(8,18){\circle*{1.2}}
\put(13,20){\makebox(0,0)[b]{\Small$s_-\!=p_{-3}^-$}}
\put(26,6){\circle*{1.2}}
\put(31,4){\makebox(0,0)[t]{\Small$p_{-3}^+\!\!=\!p_{-2}^-$}}
\put(44,18){\circle*{1.2}}
\put(40,20){\makebox(0,0)[b]{\Small$p_{-2}^+\!\!=\!p_{-1}^-$}}
\put(62,6){\circle*{1.2}}
\put(58,4){\makebox(0,0)[t]{\Small$p_{-1}^+\!\!=\!p_{0}^-$}}

\put(142,18){\circle*{1.2}}
\put(138,20){\makebox(0,0)[b]{\Small$p_3^+\!=\!s_+$}}
\put(124,6){\circle*{1.2}}
\put(121,4){\makebox(0,0)[t]{\Small$p_{2}^+\!\!=\!p_{3}^-$}}
\put(106,18){\circle*{1.2}}
\put(110,20){\makebox(0,0)[b]{\Small$p_{1}^+\!\!=\!p_{2}^-$}}
\put(88,6){\circle*{1.2}}
\put(92,4){\makebox(0,0)[t]{\Small$p_{0}^+\!\!=\!p_{1}^-$}}

\filltype{black}
\put(75,6){\circle*{1.2}}\put(75,5){\makebox(0,0)[t]{\Small$s_0$}}
\put(66,6){\circle*{1.2}}\put(67,7){\makebox(0,0)[b]{\Small$s_5^+$}}
\put(84,6){\circle*{1.2}}\put(85,7){\makebox(0,0)[b]{\Small$s_5^-$}}
\put(17,12){\circle*{1.2}}\put(16,11){\makebox(0,0)[t]{\Small$s_4^-$}}
\put(133,12){\circle*{1.2}}\put(135,11){\makebox(0,0)[t]{\Small$s_4^+$}}
\put(53,12){\circle*{1.2}}\put(51,11){\makebox(0,0)[t]{\Small$s_2^+$}}
\put(97,12){\circle*{1.2}}\put(99,11){\makebox(0,0)[t]{\Small$s_2^-$}}
\put(29,8){\circle*{1.2}}\put(32,9){\makebox(0,0)[t]{\Small$s_3^+$}}
\put(121,8){\circle*{1.2}}\put(117,9){\makebox(0,0)[t]{\Small$s_3^-$}}
\put(38,14){\circle*{1.2}}\put(41,14){\makebox(0,0)[t]{\Small$s_1^-$}}
\put(112,14){\circle*{1.2}}\put(109,14){\makebox(0,0)[t]{\Small$s_1^+$}}

\put(75,20){\makebox(0,0)[c]{\large$I$}}
\put(62,14){\vector(-2,-1){1}}\put(88,14){\vector(2,-1){1}}
\qbezier[50](62,14)(75,19)(88,14)

\put(54,21){\vector(-2,-1){1}}\put(96,21){\vector(2,-1){1}}
\qbezier[80](54,21)(75,27)(96,21)
\end{picture}

\medskip

\begin{defi}
Let $Y$ be a scheme. A {\sl $B_n$-curve over $Y$} is a collection
$(\pi\colon C\to Y,I,$ $s_-,s_+,s_0,s_1^\pm,\ldots,s_n^\pm)$, where $C$
is a scheme, $\pi$ is a flat proper morphism of schemes, $I\colon
C\to C$ an involution over $Y$ and
$s_-,s_+,s_0,s_1^\pm,\ldots,s_n^\pm\colon Y\to C$ are sections such
that for any geometric point $y$ of $Y$ the collection
$(C_y,I_y,(s_-)_y,(s_+)_y,(s_0)_y,(s_1^\pm)_y,$
$\ldots,(s_n^\pm)_y)$ is a $B_n$-curve over $y$. An isomorphism of
$B_n$-curves over $Y$ is an isomorphism of $Y$-schemes that is
compatible with the involution and the sections. We define the {\sl
moduli functor of $B_n$-curves} as the functor
\[
\begin{array}{crcl}
\underline{\overline{L}_n^{0,\pm}}:\;&(\textit{schemes})^\circ&
\;\to\;&(\textit{sets})\\
&Y&\;\mapsto\;&\left\{\textit{$B_n$-curves over $Y$}\right\}\,/\sim
\end{array}
\]
that associates to a scheme $Y$ the set of isomorphism classes
of $B_n$-curves over $Y$ and to a morphism of schemes
the map obtained by pulling back $B_n$-curves.
\end{defi}

We will show in section \ref{sec:modspace} that a fine moduli
space of $B_n$-curves $\overline{L}_n^{0,\pm}$ exists and that it
is isomorphic to the toric variety associated with the root
system $B_n$.

\medskip
\section{Toric varieties $X(B_n)$}
\label{sec:X(Bn)}

For a root system $R$ of rank $n$ we have the $n$-dimensional
smooth projective toric variety $X(R)$ associated with the fan
that consists of the Weyl chambers of the root system and their
faces (\cite{Kl85}, \cite{Pr90}, see also \cite[1.1]{BB11}).
Here we consider the particular case of root systems of type $B$.

\medskip

Let $E$ be an $n$-dimensional Euclidean space with basis $u_1,\ldots,u_n$.
The root system $B_n$ in $E$ consists of the following $2n^2$ roots:
\[\pm u_i\;\:\textit{for}\;\:i\in\{1,\ldots,n\};\quad\pm(u_i+u_j),
\pm(u_i-u_j)\;\:\textit{for}\;\:i,j\in\{1,\ldots,n\},i<j.\]
The root lattice $M(B_n)\cong\Z^n$ of the root system $B_n$ is the
lattice in $E$ generated by $u_1,\ldots,u_n$. The following is a
set of simple roots:
\[u_1-u_2,u_2-u_3,\ldots,u_{n-1}-u_n,u_n.\]
The Weyl group $(\Z/2\Z)^n\rtimes S_n$ acts by $u_i\mapsto\pm u_i$
and by permuting the $u_i$.
So there are $2^nn!$ sets of simple roots, these are of the form
$\varepsilon_1u_{i_1}-\varepsilon_2u_{i_2},
\varepsilon_2u_{i_2}-\varepsilon_3u_{i_3},\ldots,
\varepsilon_{n-1}u_{i_{n-1}}-\varepsilon_{n}u_{i_{n}},
\varepsilon_nu_{i_n}$ for orderings $i_1,\ldots,i_n$ of the set
$\{1,\ldots,n\}$ and signs $\varepsilon_1,\ldots,\varepsilon_n$.
For later use we list linear relations between positive roots
of $B_n$.

\begin{lemma}\label{le:linrel-B_n}
Let $B_n^+$ be the set of positive roots of $B_n$ corresponding to the
set of simple roots $u_1-u_2,u_2-u_3,\ldots,u_{n-1}-u_n,u_n$ and put
$\beta_{ij}=u_i-u_j$, $\gamma_{ij}=u_i+u_j$ for $i,j\in\{1,\ldots,n\}$,
$i\neq j$.
Then $B_n^+=\{u_1,\ldots,u_n\}\cup\{\beta_{ij}\:|\:i<j\}\cup
\{\gamma_{ij}\:|\:i\neq j\}$ and the tripels of positive roots
$\alpha,\beta,\gamma\in B_n^+$ satisfying $\alpha+\beta=\gamma$
are the following:
\[
\begin{array}{cl}
\beta_{ij}+u_j=u_i&\quad (i,j\in\{1,\ldots,n\},\:i<j)\\
u_i+u_j=\gamma_{ij}&\quad (i,j\in\{1,\ldots,n\},\:i\neq j)\\
\beta_{ij}+\beta_{jk}=\beta_{ik}&\quad (i,j,k\in\{1,\ldots,n\},\:i<j<k)\\
\beta_{ij}+\gamma_{jk}=\gamma_{ik}&\quad (i,j,k\in\{1,\ldots,n\},\:i<j,\:
k\neq i,j)\\
\end{array}
\]
\end{lemma}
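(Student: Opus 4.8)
The plan is to prove Lemma \ref{le:linrel-B_n} by a direct and exhaustive case analysis, which is essentially bookkeeping once the right organising principle is fixed. First I would establish the description of $B_n^+$. The simple roots $u_1-u_2,\ldots,u_{n-1}-u_n,u_n$ generate exactly the nonnegative integer combinations that are roots; since every root of $B_n$ is of the form $\pm u_i$ or $\pm(u_i\pm u_j)$, one checks that the ones expressible as nonnegative combinations of the listed simple roots are precisely $u_i$ (for all $i$), $\beta_{ij}=u_i-u_j$ (for $i<j$), and $\gamma_{ij}=u_i+u_j$ (for $i\neq j$; note $\gamma_{ij}=\gamma_{ji}$, so the constraint is merely $i\neq j$ to index the unordered pair). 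Indeed $u_i-u_j=(u_i-u_{i+1})+\cdots+(u_{j-1}-u_j)$ for $i<j$, $u_i=(u_i-u_{i+1})+\cdots+(u_{n-1}-u_n)+u_n$, and $u_i+u_j$ with $i<j$ equals $(u_i-u_j)+2u_j=\beta_{ij}+2(u_j-u_{j+1})+\cdots+2(u_{n-1}-u_n)+2u_n$, all with nonnegative coefficients; conversely $u_j-u_i$ with $i<j$, or $-(u_i+u_j)$, have a negative coefficient on some simple root, so are negative roots. This gives $|B_n^+|=n+\binom{n}{2}+\binom{n}{2}=n^2$, matching half of $2n^2$.

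Next I would enumerate all triples $\alpha+\beta=\gamma$ with $\alpha,\beta,\gamma\in B_n^+$. The clean way is to split on the shape of $\gamma$. \emph{Case $\gamma=u_i$:} write $u_i=\alpha+\beta$; since the total $u$-coefficient sum of $u_i$ is $1$ and each positive root has coefficient sum in $\{1,2\}$ (namely $1$ for the $u_k$, $2$ for $\beta_{kl}$ and $\gamma_{kl}$), wait—$\beta_{kl}=u_k-u_l$ has coefficient sum $0$. So I would instead track the vector of coordinates directly: $u_i$ has a single nonzero entry $+1$, so $\alpha+\beta=e_i$ forces, up to order, $\alpha=\beta_{ij}=u_i-u_j$ and $\beta=u_j$ for some $j>i$ (the only way to write $e_i$ as a sum of two roots from our list), giving the first family. \emph{Case $\gamma=\gamma_{ij}=u_i+u_j$ with $i<j$:} the only decompositions into two listed positive roots are $u_i+u_j$ itself, and $\beta_{ik}+\gamma_{kj}$-type sums: $(u_i-u_k)+(u_k+u_j)=u_i+u_j$ needs $i<k$ and $k\neq j$, i.e. $\beta_{ik}+\gamma_{kj}=\gamma_{ij}$; rewriting with $k$ as the summation variable and relabelling indices gives the fourth family $\beta_{ij}+\gamma_{jk}=\gamma_{ik}$ for $i<j$, $k\neq i,j$. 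One must check that no decomposition of the form $\gamma_{il}+\beta_{lj}$ with the ``$\gamma$'' first produces anything new—it doesn't, it's the same set of triples reindexed—and that $\beta+\beta$ cannot equal a $\gamma$ (coordinate parity: a $\gamma$ has two $+1$ entries, a sum of two $\beta$'s has entries summing to $0$). \emph{Case $\gamma=\beta_{ik}=u_i-u_k$ with $i<k$:} decompositions are $\beta_{ij}+\beta_{jk}$ with $i<j<k$ (the third family); the alternative $u_i+(u_j-u_k)$-type fails because $u_i$ is positive but then $\beta=u_i-\beta_{ik}=u_k$... actually $u_i + (-u_k)$ has $-u_k\notin B_n^+$, and $\gamma$-plus-something cannot have coefficient sum $0$. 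This exhausts the cases.

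The main obstacle—though it is more tedious than deep—is making the case analysis genuinely exhaustive without double-counting: one has to be careful that families three and four are stated with the index conventions that list each triple exactly once (hence $i<j<k$ in the third, and $i<j$ with $k\neq i,j$ in the fourth, where $\gamma_{jk}$ and $\gamma_{ik}$ are unordered-pair symbols), and one must confirm that reversing the roles of $\alpha$ and $\beta$ never yields a relation outside the four listed families. I would handle this by always writing the decomposition of $\gamma$ with a fixed convention (e.g. listing the ``$u_k$-type'' or ``$\beta$-type'' summand first) and then reading off the constraints; a short parity/coordinate-sum argument rules out the spurious combinations ($\beta+\beta=\gamma$, $u+u=\beta$, $\gamma+{\rm anything}$ equal to a shorter root, etc.). Once the organising split on the type of $\gamma$ is in place, each subcase is a one-line coordinate check, and assembling them gives exactly the four displayed families.
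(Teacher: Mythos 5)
Your case analysis is correct: splitting on the type of the sum $\gamma$ and using the coordinate-support and coordinate-sum constraints does exhaust all triples $\alpha+\beta=\gamma$ in $B_n^+$, and it recovers exactly the four families of the lemma together with the stated description of $B_n^+$. The paper gives no proof of this lemma at all—it is stated as a routine root-system computation—so your direct enumeration is precisely the intended verification; just clean up the two thinking-aloud passages (the ``wait---'' self-correction about coefficient sums, and the garbled sentence in the case $\gamma=\beta_{ik}$) so that the final argument reads as a single coherent coordinate check.
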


\medskip

Let $N(B_n)$ be the lattice dual to the root lattice $M(B_n)$ and
$v_1,\ldots,v_n$ the basis of $N(B_n)$ dual to $u_1,\ldots,u_n$.
The fan $\Sigma(B_n)$ is defined as the fan of Weyl chambers
in $N(B_n)$, i.e.\ its maximal cones are the Weyl chambers
$\sigma_S=S^\vee=\{v\in N(B_n)_\Q\:|\:\forall\:\alpha\in S\colon\:
\langle\alpha,v\rangle\geq 0\}$ for sets of simple roots $S$ of
the root system $B_n$ and all cones arise as faces of these.
For the set of simple roots $S=\{u_1-u_2,u_2-u_3,\ldots,u_{n-1}-u_n,u_n\}$
has the dual basis $v_1,v_1+v_2,\ldots,v_1+\ldots+v_n$ of $N(B_n)$,
the Weyl chamber $\sigma_S$ is equal to $\langle v_1,v_1+v_2,\ldots,
v_1+\ldots+v_n\rangle_{\Q_{\geq0}}$.
All Weyl chambers, i.e.\ all maximal cones of the fan $\Sigma(B_n)$,
arise as translates of $\sigma_S$ under the action of the Weyl group
on $N(B_n)_\Q$, thus they are generated by sets of elements of the form
$\varepsilon_1v_{i_1},\varepsilon_1v_{i_1}+\varepsilon_2v_{i_2},\ldots,
\varepsilon_1v_{i_1}+\ldots+\varepsilon_nv_{i_n}$ for orderings
$i_1,\ldots,i_n$ of the set $\{1,\ldots,n\}$ and signs
$\varepsilon_i\in\{\pm1\}$.
The fan $\Sigma(B_n)$ has $3^n-1$ one-dimensional cones generated by
the elements of the form $\varepsilon_1v_{i_1}+\ldots
+\varepsilon_kv_{i_k}$ for $k\in\{1,\ldots,n\}$.
These are via $v_B:=\sum_{\varepsilon_ii\in B}\varepsilon_iv_i$
$\leftrightarrow$ $B$ in bijection with the set $\mathcal B$ of all
subsets $\emptyset\neq B\subset\{\pm 1,\ldots,\pm n\}$ such that
$B\cap\{i,-i\}\neq\{i,-i\}$ for $i=1,\ldots,n$.
The one-dimensional cones for a family of such sets
$B^{(1)},\ldots,B^{(k)}$ form a higher dimensional cone whenever
they can be ordered such that $B^{(i_1)}\subsetneq\ldots\subsetneq
B^{(i_k)}$.

\medskip

We have the $n$-dimensional smooth projective toric variety $X(B_n)$
associated with the fan $\Sigma(B_n)$ with respect to the lattice
$N(B_n)$.
As usual, any element $u\in M(B_n)$ defines a character of the open
dense torus $T(B_n)$ (resp.\ a rational function on $X(B_n)$) denoted
by $x^u$.
The toric variety $X(B_n)$ has the following covering by affine spaces.
For any set $S$ of simple roots we have the maximal cone
$\sigma_S=S^\vee$ and the chart $U_S=\Spec\Z[\sigma_S^\vee\cap M(B_n)]
\cong\A^n$, for example if $S=\{u_1-u_2,u_2-u_3,\ldots,u_{n-1}-u_n,u_n\}$
then $\Z[\sigma_S^\vee\cap M(B_n)]=\Z[\frac{x_1}{x_2},\ldots,
\frac{x_{n-1}}{x_n},x_n]$. The Weyl group $W(B_n)=(\Z/2\Z)^n\rtimes S_n$
acts on $X(B_n)$, it permutes these affine charts.

\medskip

By \cite[1.2]{BB11} the closures of torus orbits in $X(B_n)$ are
isomorphic to products
$X(B_{n_1})\times X(A_{n_2})\times\ldots\times X(A_{n_k})$.
The torus invariant divisor for the one-dimensional cone generated
by $\varepsilon_1v_{i_1}+\ldots+\varepsilon_kv_{i_k}$ is isomorphic
to $X(B_{n-k})\times X(A_{k-1})$, in particular for
$\varepsilon_1v_{i_1}+\ldots+\varepsilon_nv_{i_n}$ we have a divisor
isomorphic to $X(A_{n-1})$.

\medskip
\section{The universal curve}
\label{sec:univcurve}

We construct a $B_n$-curve over $X(B_n)$, which later turns out to
be the universal curve over the moduli space of $B_n$-curves,
by using functorial properties of the toric varieties associated
with root systems developed in \cite[1.2]{BB11}. We fix the
following notations for roots of $B_n$ and $B_{n+1}$:
$\beta_{ij}=u_i-u_j$, $\gamma_{ij}=u_i+u_j$ for $i,j\in\{1,\ldots,n\}$,
$i\neq j$ and $\alpha_i^+=u_{n+1}+u_i$, $\alpha_i^-=u_{n+1}-u_i$
for $i\in\{1,\ldots,n\}$.

\begin{con}\label{con:univ-B_n-curve} (The universal $B_n$-curve.)\\
Consider the root subsystem $B_n\subset B_{n+1}$ consisting of the
roots in the subspace spanned by $u_1,\ldots,u_n$. The inclusion of
root systems $B_n\subset B_{n+1}$ determines a proper surjective
morphism $X(B_{n+1})\to X(B_n)$.

There are $2n+1$ additional pairs of opposite roots, the pairs
$\pm\alpha_i^+$ and $\pm\alpha_i^-$ for $i\in\{1,\ldots,n\}$ and
the pair $\pm u_{n+1}$. Each of these defines a projection onto
the root subsystem $B_n\subset B_{n+1}$ in the sense of \cite[1.2]{BB11},
thus the pairs $\pm\alpha_i^+$ and $\pm\alpha_i^-$ define
sections $s_i^+,s_i^-\colon X(B_n)\to X(B_{n+1})$
and an additional section $s_0\colon X(B_n)\to X(B_{n+1})$ is
given by the projection with kernel generated by $u_{n+1}$.

Further, we have two sections $s_\pm\colon X(B_n)\to X(B_{n+1})$
which are inclusions of $X(B_n)$ into $X(B_{n+1})$ as torus invariant
divisors (cf.\ \cite[Prop.\ 1.9, Rem.\ 1.11]{BB11}) corresponding
to the one-dimensional cones of the fan $\Sigma(B_{n+1})$ generated
by $\pm v_{n+1}$.
Locally, the image of $s_-$ (resp.\ $s_+$) is given by the equations
$x^{-\alpha_i^\pm}=0$, $x^{-u_{n+1}}=0$ (resp.\ $x^{\alpha_i^\pm}=0$,
$x^{u_{n+1}}=0$) on the affine charts of $X(B_{n+1})$ corresponding
to the sets of positive roots containing $-\alpha_i^\pm,-u_{n+1}$
(resp.\ $\alpha_i^\pm, u_{n+1}$).

There is an involution $I$ of $X(B_{n+1})$ over $X(B_n)$ corresponding
to the involution of $B_{n+1}$ which fixes $B_n\subset B_{n+1}$
determined by the linear map $u_i\mapsto u_i$ for $i\in\{1,\ldots,n\}$
and $u_{n+1}\mapsto-u_{n+1}$. This element of the Weyl group
$W(B_{n+1})$ is the reflection determined by the root $\pm u_{n+1}$.
The section $s_0$ is invariant under $I$, whereas for each
$i\in\{1,\ldots,n\}$ the sections $s_i^+$ and $s_i^-$ and also
$s_-$ and $s_+$ are interchanged.
\end{con}

\begin{prop}
The collection $(X(B_{n+1})\to X(B_n),I,s_-,s_+,s_0,s_1^\pm,\ldots,
s_n^\pm)$ of construction \ref{con:univ-B_n-curve} is a $B_n$-curve
over $X(B_n)$.
\end{prop}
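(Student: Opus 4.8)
The plan is to check, in turn, the conditions in the definition of a $B_n$-curve over $X(B_n)$: that $\pi\colon X(B_{n+1})\to X(B_n)$ of Construction~\ref{con:univ-B_n-curve} is flat and proper, that $I$ is an involution over $X(B_n)$, that the $s$'s are sections, and that every geometric fibre, with its involution and marked points, is a $B_n$-curve. Properness is immediate because $X(B_{n+1})$ and $X(B_n)$ are complete, and $\pi$ is surjective because the associated lattice map $\bar\phi\colon N(B_{n+1})\to N(B_n)$, $v_i\mapsto v_i$ for $i\le n$ and $v_{n+1}\mapsto 0$, is surjective and $\pi$ is closed. For flatness I would invoke the local criterion for flatness (a morphism from a Cohen--Macaulay scheme to a regular scheme all of whose fibres have the expected dimension is flat): since $X(B_{n+1})$ is smooth, hence Cohen--Macaulay, and $X(B_n)$ is regular, it suffices to see that every fibre of $\pi$ has dimension $1$. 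The inequality $\ge 1$ follows from $\dim X(B_{n+1})=\dim X(B_n)+1$ together with surjectivity; for $\le 1$, note that by the functorial description in \cite[1.2]{BB11} the map $\bar\phi$ carries each cone of $\Sigma(B_{n+1})$ onto a cone of $\Sigma(B_n)$ and lowers dimension by at most one, since $\ker\bar\phi=\Q v_{n+1}$ is a line meeting every (strongly convex) cone of $\Sigma(B_{n+1})$ in at most a ray; hence every torus orbit of $X(B_{n+1})$ maps onto a torus orbit of $X(B_n)$ with fibres of dimension $\le 1$, and therefore every fibre of $\pi$ has dimension $\le 1$.

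Next I would reduce the fibre analysis to the distinguished points. The whole collection is equivariant for the action of $T(B_{n+1})$ covering that of $T(B_n)$: indeed $I$ is induced by an element of $W(B_{n+1})$ and the sections $s_0,s_i^\pm$ by projections of root systems, so $I$ and all the sections are morphisms of toric varieties. Consequently, for every $\tau\in\Sigma(B_n)$ the fibre over an arbitrary geometric point of the orbit $O_\tau$, with its involution and marked points, is isomorphic to the one over the distinguished point $x_\tau$, and it is enough to treat $\pi^{-1}(x_\tau)$. Writing $\tau$ as a flag $B^{(1)}\subsetneq\dots\subsetneq B^{(d)}$ of sets in $\mathcal B_n$ with $d=\dim\tau$, the plan is to identify the torus orbits of $X(B_{n+1})$ meeting $\pi^{-1}(x_\tau)$ --- those $O_\sigma$ for which $\tau$ is the smallest cone of $\Sigma(B_n)$ containing $\bar\phi(\sigma)$ --- as precisely the cones obtained from this flag by inserting the element $n+1$ or $-(n+1)$, which fall into exactly $2d+1$ admissible strictly nested families. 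Reading off the face relations among these cones then shows that $\pi^{-1}(x_\tau)$, with its reduced structure, is a chain of projective lines of length $2d+1$, in particular of odd length, with maximal length $2n+1$ over the torus-fixed points (reproducing, as expected, the Dynkin diagram of $B_{n+1}$) and length $1$ over the open orbit; moreover the two torus-invariant divisors $V(\pm v_{n+1})$, that is, the images of $s_-$ and $s_+$, meet the chain exactly in its two poles. That this reduced chain is the scheme-theoretic fibre then follows from the flatness just established together with reducedness of one special fibre, or directly from the local toric equations on the charts $U_S$ of Section~\ref{sec:X(Bn)}.

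It then remains to deal with the involution and the marked points on the fibres. The involution $I$ is the reflection of $B_{n+1}$ in $\pm u_{n+1}$, acting on $N(B_{n+1})$ by $v_{n+1}\mapsto -v_{n+1}$ and fixing $N(B_n)$ pointwise, so it is an involution over $X(B_n)$; it exchanges the rays $\pm v_{n+1}$, hence the divisors $V(\pm v_{n+1})$, hence $s_-$ and $s_+$, so on each fibre it restricts to the involution of the chain exchanging its two poles, consistently with the odd length found above. Since $I$ interchanges $\alpha_i^+=u_{n+1}+u_i$ with $\alpha_i^-=u_{n+1}-u_i$ and preserves the line $\Q u_{n+1}$, it interchanges the corresponding projections, hence the sections, giving $I\circ s_0=s_0$ and $I\circ s_i^\pm=s_i^\mp$; in particular $s_0(x_\tau)$, being $I$-invariant, is automatically one of the two fixed points of $I$ on the (unique $I$-stable, central) component of the chain, hence distinct from its poles. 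The last and decisive step is to locate the remaining marked points: by tracking the images of the sections $s_i^\pm$ through the toric description above I would show that $s_i^\pm(x_\tau)$ lies in the smooth locus of a well-determined component of $\pi^{-1}(x_\tau)$, never at $s_\pm$, and that the resulting assignment of the $2n+1$ marked points $s_0,s_i^\pm$ to the $2d+1$ components is surjective --- so that each component carries a marked point and $\pi^{-1}(x_\tau)$ is a stable $(2n+1)$-pointed chain. Combining all of this gives the proposition.

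The main obstacle I anticipate is precisely this last combinatorial bookkeeping: enumerating the cones of $\Sigma(B_{n+1})$ that contribute to a given fibre, verifying that they glue into a chain of the asserted odd length, and --- hardest of all --- pinning down on which component each section $s_0,s_i^\pm$ lands, which is what yields the stability statement. Flatness and the involution properties are by comparison formal. I would organise the argument along the lines of the type-$A_n$ computation in \cite[3.3]{BB11}, treating first the torus-fixed points, where the fibre has the maximal length $2n+1$ and the combinatorics is most transparent, and then descending to the other orbits via the equivariance noted above.
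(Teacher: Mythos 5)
Your proposal is correct in outline, but it follows a genuinely different route from the paper. The paper disposes of flatness by an explicit computation on the affine toric charts, exactly as in the type-$A$ case (\cite[Prop.\ 3.7]{BB11}), and it never analyses the fibres orbit by orbit: instead it identifies each geometric fibre with an abstract $B_n$-curve through the embedding of $X(B_{n+1})$ into $P(B_{n+1}/B_n)_{X(B_n)}\cong(\P^1)^{2n+1}_{X(B_n)}$ and the equations (\ref{eq:univ-B_n-curveI}) parametrised by the universal $B_n$-data (remark \ref{rem:emb-univ-B_n-curve}), combined with proposition \ref{prop:emb-B_n-curve} (a $B_n$-curve over a field is cut out by exactly such equations for the data extracted from it) and lemma \ref{le:B_n-data--B_n-curve} (every $B_n$-data over a field arises from some $B_n$-curve, via the ordering $\prec$ and the partition $P_{-m}\sqcup\ldots\sqcup P_m$). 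That ordering argument is precisely what you call the ``hardest'' combinatorial bookkeeping: in the paper it simultaneously produces the odd chain length, the placement of the sections on the components, and stability, without ever enumerating cones of $\Sigma(B_{n+1})$. Your plan --- miracle flatness from equidimensionality of fibres (which your fan argument does correctly justify, since the image of every cone under $v_{n+1}\mapsto 0$ is again a cone of $\Sigma(B_n)$ of dimension at most one less), equivariant reduction to distinguished points $x_\tau$, and a direct count of the $2d+1$ cones over a $d$-dimensional $\tau$ --- is a legitimate alternative and in effect re-derives proposition \ref{prop:combtypeBn} and its higher-codimension analogues by hand; what the paper's route buys is that the same equations-plus-data machinery is reused verbatim in section \ref{sec:modspace} to prove representability, whereas your route buys a more geometric picture of the degeneration pattern.

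Two caveats. First, the inference ``the reduced chain is the scheme-theoretic fibre because the family is flat and one special fibre is reduced'' is not a valid principle: flatness together with reducedness of a single fibre says nothing about the others (already $\Spec k[x,t]/(x^2-t)\to\Spec k[t]$ in characteristic $2$ shows fibre reducedness does not propagate). To repair it you must either carry out the chart computation you mention as an alternative (which then also gives flatness, making miracle flatness redundant), or invoke the openness of the locus of geometrically reduced fibres for a proper flat morphism of finite presentation together with torus invariance of that locus and reducedness over the torus fixed points. Second, the decisive steps --- that the $2d+1$ one-dimensional orbit closures over $x_\tau$ glue into a chain, on which component each $s_0,s_i^\pm$ lands, and that every component receives a marked point --- are only announced, not proved; also note that the paper's charts are defined over $\Spec\Z$, so the dimension and Cohen--Macaulay/regularity statements in your flatness argument should be phrased for the arithmetic total spaces (which still works, but deserves a word). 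None of this is a fatal obstruction, but as written the proposal defers exactly the content that the paper's remark \ref{rem:emb-univ-B_n-curve}, proposition \ref{prop:emb-B_n-curve} and lemma \ref{le:B_n-data--B_n-curve} supply.
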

\begin{proof}
The morphism $X(B_{n+1})\to X(B_n)$ is proper. We can show that
it is flat by considering the covering of $X(B_{n+1})$ and
$X(B_n)$ by affine toric charts similar as in the case of root
systems of type $A$ (see \cite[Prop.\ 3.7]{BB11}).

That any fibre is a $B_n$-curve follows from the results below.
Remark \ref{rem:emb-univ-B_n-curve} describes the universal curve
in terms of equations, proposition \ref{prop:emb-B_n-curve} shows
that such equations define a $B_n$-curve.
It only remains to show that any $B_n$-data arises as in proposition
\ref{prop:emb-B_n-curve} from a $B_n$-curve. This is done in
lemma \ref{le:B_n-data--B_n-curve}.
\end{proof}

\begin{defi}
We call the object $(X(B_{n+1})\to X(B_n),I,s_-,s_+,s_0,s_1^\pm,\ldots,
s_n^\pm)$ of construction \ref{con:univ-B_n-curve} the universal
$B_n$-curve over $X(B_n)$.
\end{defi}

\begin{ex} We picture the universal curve $X(B_2)$ over
$X(B_1)\cong\P^1$ with the sections $s_-,s_+,s_0,s_1^\pm$.
The generic fibre is a $\P^1$, whereas the fibres over the
two torus fixed points $x^{-u}=0$ and $x^u=0$ of $X(B_1)$
are chains consisting of three projective lines.

\noindent
\begin{picture}(150,60)(0,0)
\put(35,-5){
\put(-10,40){\makebox(0,0)[l]{\large$X(B_2)$}}
\put(-10,10){\makebox(0,0)[l]{\large$X(B_1)$}}
\put(8,10){\line(1,0){64}}
\put(16,9){\line(0,1){2}}\put(18,8){\makebox(0,0)[t]{\Small$x^{-u}=0$}}
\put(64,9){\line(0,1){2}}\put(64,8){\makebox(0,0)[t]{\Small$x^u=0$}}
\put(17,28){\line(-1,3){3}}\put(17,28){\line(1,-3){3}}
\put(17,52){\line(1,3){3}}\put(17,52){\line(-1,-3){3}}
\put(15,40){\line(0,1){10}}\put(15,40){\line(0,-1){10}}
\put(63,28){\line(1,3){3}}\put(63,28){\line(-1,-3){3}}
\put(63,52){\line(-1,3){3}}\put(63,52){\line(1,-3){3}}
\put(65,40){\line(0,1){10}}\put(65,40){\line(0,-1){10}}
\dottedline{1}(8,22)(72,22)\dottedline{1}(8,58)(72,58)
\dottedline{1}(8,40)(72,40)
\put(24,59){\makebox(0,0)[b]{\Small$s_+$}}
\put(24,21){\makebox(0,0)[t]{\Small$s_-$}}
\put(19,41){\makebox(0,0)[b]{\Small$s_0$}}
\put(24,50){\makebox(0,0)[t]{\Small$s_1^+$}}
\put(24,30){\makebox(0,0)[b]{\Small$s_1^-$}}
\curve[8](8,26, 16,27, 24,29, 32,33,  40,40,
48,47, 56,51, 64,53, 72,54)
\curve[8](8,54, 16,53, 24,51, 32,47,  40,40,
48,33, 56,29, 64,27, 72,26)
}
\end{picture}

\medskip
\noindent
This universal curve is constructed using the root system $B_2$ with
its root subsystem $B_1=\{\pm u_1\}$ and the corresponding map of
fans $\Sigma(B_2)\to\Sigma(B_1)$.

\medskip

\noindent
\begin{picture}(150,60)(0,0)
\put(3,40){\makebox(0,0)[l]{\large$B_2$}}
\put(3,5){\makebox(0,0)[l]{\large$B_1$}}
\put(35,40){\vector(0,1){18}}\put(35,40){\vector(0,-1){18}}
\put(35,40){\vector(1,0){18}}\put(35,40){\vector(-1,0){18}}
\put(35,40){\vector(1,1){18}}\put(35,40){\vector(-1,1){18}}
\put(35,40){\vector(1,-1){18}}\put(35,40){\vector(-1,-1){18}}
\dottedline{1}(13,44)(57,44)\dottedline{1}(13,36)(57,36)
\dottedline{1}(13,44)(13,36)\dottedline{1}(57,44)(57,36)
\put(53,37){\makebox(0,0)[b]{\small$u_1$}}
\put(17,37){\makebox(0,0)[b]{\small$-u_1$}}
\put(51,54){\makebox(0,0)[l]{\small$\alpha_1^+\!=\!u_1\!+\!u_2$}}
\put(36,57){\makebox(0,0)[l]{\small$u_2$}}
\put(19,54){\makebox(0,0)[r]{\small$\alpha_1^-$}}
\put(51,26){\makebox(0,0)[l]{\small$-\alpha_1^-\!=\!u_1\!-\!u_2$}}
\put(34,23){\makebox(0,0)[r]{\small$-u_2$}}
\put(19,26){\makebox(0,0)[r]{\small$-\alpha_1^+$}}
\put(25,33){\vector(0,1){2}}\put(45,33){\vector(0,1){2}}
\dottedline{1}(25,8)(25,35)\dottedline{1}(45,8)(45,35)
\put(35,5){\vector(1,0){18}}\put(35,5){\vector(-1,0){18}}
\put(53,7){\makebox(0,0)[b]{\small$u$}}
\put(17,7){\makebox(0,0)[b]{\small$-u$}}
\drawline(35,4)(35,6)

\put(75,0){
\put(72,40){\makebox(0,0)[r]{\large$\Sigma(B_2)$}}
\put(72,5){\makebox(0,0)[r]{\large$\Sigma(B_1)$}}
\put(35,40){\vector(0,1){18}}\put(35,40){\vector(0,-1){18}}
\put(35,40){\vector(1,0){18}}\put(35,40){\vector(-1,0){18}}
\put(35,40){\vector(1,1){18}}\put(35,40){\vector(-1,1){18}}
\put(35,40){\vector(1,-1){18}}\put(35,40){\vector(-1,-1){18}}
\put(53,39){\makebox(0,0)[t]{\small$v_1$}}
\put(17,39){\makebox(0,0)[t]{\small$-v_1$}}
\put(51,54){\makebox(0,0)[l]{\small$v_1\!+\!v_2$}}
\put(36,57){\makebox(0,0)[l]{\small$v_2$}}
\put(19,54){\makebox(0,0)[r]{\small$-v_1\!+\!v_2$}}
\put(51,26){\makebox(0,0)[l]{\small$v_1\!-\!v_2$}}
\put(34,23){\makebox(0,0)[r]{\small$-v_2$}}
\put(19,26){\makebox(0,0)[r]{\small$-v_1\!-\!v_2$}}
\dottedline{1}(25,18)(25,12)\put(25,12){\vector(0,-1){2}}
\dottedline{1}(45,18)(45,12)\put(45,12){\vector(0,-1){2}}
\put(35,5){\vector(1,0){18}}\put(35,5){\vector(-1,0){18}}
\put(53,7){\makebox(0,0)[b]{\small$v$}}
\put(17,7){\makebox(0,0)[b]{\small$-v$}}
\drawline(35,4)(35,6)
}
\end{picture}
\end{ex}

By \cite[1.2]{BB11} pairs of opposite roots $\{\pm\alpha\}$ in a
root system $R$ give rise to morphisms $X(R)\to\P^1$. We write
$\P^1_{\{\pm\alpha\}}$ for the corresponding copy of $\P^1$ with
homogeneous coordinates $z_\alpha,z_{-\alpha}$ such that the
rational function $x^\alpha$ on $X(R)$ is the pull-back of
$\frac{z_\alpha}{z_{-\alpha}}$.
Further, the collection of these morphisms for all pairs of
opposite roots $\{\pm\alpha\}$ in $R$, i.e.\ root subsystems
isomorphic to $A_1$, defines a closed embedding
$X(R)\to\prod_{\{\pm\alpha\}\subseteq R}\P^1_{\{\pm\alpha\}}=:P(R)$.
By \cite[1.3]{BB11} the equations defining the image of $X(R)$
in $P(R)$ come from root subsystems of type $A_2$ in $R$ or
equivalently linear relations between positive roots of $R$.

\begin{rem}\label{rem:emb-univ-B_n-curve}
Consider $X(B_{n+1})$ and $X(B_n)$ as embedded $X(B_{n+1})\subseteq
P(B_{n+1})$, $X(B_n)\subseteq P(B_n)$.
Then the morphism $X(B_{n+1})\to X(B_n)$ is induced by
the projection onto the subproduct $P(B_{n+1})\to P(B_n)$.
The subvarieties $X(B_{n+1})$ (resp.\ $X(B_n)$) 
are determined by the homogeneous equations
$z_{\alpha}z_{\beta}z_{-\gamma}=z_{-\alpha}z_{-\beta}z_{\gamma}$
for roots $\alpha,\beta,\gamma$ such that $\alpha+\beta=\gamma$,
i.e.\ root subsystems of type $A_2$ in $B_{n+1}$ (resp.\ $B_n$).\\
If we first consider the product $P(B_{n+1})$ and the equations
coming from root subsystems of type $A_2$ in $B_n$, we have
\[
\begin{array}{rcl}
P(B_{n+1}/B_n)_{X(B_n)}&=&\big(\prod_{A_1\cong R\subseteq B_{n+1}
\setminus B_n}\P^1_{R}\big)_{X(B_n)}\\
&=&\big(\prod_{i=1}^n\P^1_{\{\pm\alpha_i^+\}}\times
\prod_{i=1}^n\P^1_{\{\pm\alpha_i^-\}}\times
\P^1_{\{\pm u_{n+1}\}}\big)_{X(B_n)}\\
\end{array}
\]
Therein, $X(B_{n+1})$ is the closed subvariety given by the equations
corresponding to root subsystems of type $A_2$ in $B_{n+1}$
which are not contained in $B_n$. We choose the set of positive 
roots $B_{n+1}^+$ corresponding to the set of simple roots
$\{u_{n+1}-u_1,u_1-u_2,u_2-u_3,\ldots,u_{n-1}-u_n,u_n\}$.
Then $B_{n+1}^+\setminus B_n^+=\{u_{n+1},\alpha_1^\pm,\ldots,
\alpha_n^\pm\}$ and we can write these equations as follows
\begin{equation}\label{eq:univ-B_n-curveI}
t_{\beta}z_{\alpha_2}z_{-\alpha_1}=t_{-\beta}z_{-\alpha_2}z_{\alpha_1}
\qquad
\begin{array}{l}
\textit{for}\;\alpha_1,\alpha_2\in \{u_{n+1},\alpha_1^\pm,
\ldots,\alpha_n^\pm\}\\
\textit{such that}\;\beta=\alpha_1-\alpha_2\;\textit{is a root of $B_n$}\\
\end{array}
\end{equation}
where $t_{\pm\beta}$ are the homogeneous coordinates of
$\P^1_{\{\pm\beta\}}$ (consider $X(B_n)$ as embedded in $P(B_n)$)
or equivalently the two generating sections of the line bundle
$\mathscr L_{\{\pm\beta\}}$ being part of the universal data on
$X(B_n)$ as defined in \cite[1.3]{BB11}.\\
The sections $s_i^\pm\colon X(B_n)\to X(B_{n+1})$ for
$i\in\{1,\ldots,n\}$ are given by the additional equations
$z_{\alpha_i^\pm}=z_{-\alpha_i^\pm}$, the section $s_0$ by
$z_{u_{n+1}}=z_{-u_{n+1}}$.
The sections $s_-$ (resp.\ $s_+$) are given by $z_{-u_{n+1}}=0$,
$z_{-\alpha_i^\pm}=0$ for $i=1,\ldots,n$ (resp.\ $z_{u_{n+1}}=0$,
$z_{\alpha_i^\pm}=0$ for $i=1,\ldots,n$).
\end{rem}

\begin{ex}
The universal $B_1$-curve $X(B_2)\!\subset\!(\P^1_{\{\pm\alpha_1^+\}}
\!\times\!\P^1_{\{\pm\alpha_1^-\}}\!\times\!\P^1_{\{\pm u_2\}})_{X(B_1)}$
over $X(B_1)$ is given by the homogeneous equations
\[
t_{u_1}z_{u_2}z_{-\alpha_1^+}=t_{-u_1}z_{-u_2}z_{\alpha_1^+},\quad
t_{u_1}z_{\alpha_1^-}z_{-u_2}=t_{-u_1}z_{-\alpha_1^-}z_{u_2}
\]
where $(\mathscr L_{\{\pm u_1\}},\{t_{u_1},t_{-u_1}\})$ is the
universal $B_1$-data on $X(B_1)\cong\P^1$. We picture the
$B_1$-curves defined by these equations for
$(t_{u_1}:t_{-u_1})=(0:1),(a:b),(1:0)$. If $(t_{u_1}:t_{-u_1})=(a:b)
\neq(0:1),(1:0)$ we have a projective line, we draw its projection
onto $\P^1_{\{\pm u_2\}}$ and write the sections in terms of
the homogeneous coordinates $z_{u_2},z_{-u_2}$.
Over the two torus fixed points of $X(B_1)$ the curve is a chain of
three projective lines in $\P^1_{\{\pm\alpha_1^+\}}\times
\P^1_{\{\pm\alpha_1^-\}}\times\P^1_{\{\pm u_2\}}$.

\medskip

\noindent
\begin{picture}(150,40)(0,0)
\put(4,0){\makebox(0,0)[l]{\small$(t_{u_1}:t_{-u_1})=(1:0)$}}
\put(5,5){
\put(20,35){\line(0,-1){25}}
\put(26,19){\line(-3,-2){26}}
\put(30,5){\line(-1,0){30}}
\filltype{white}
\put(20,30){\circle*{1.2}}
\put(22,30){\makebox(0,0)[l]{\small$s_+$}}
\put(25,5){\circle*{1.2}}
\put(25,3){\makebox(0,0)[t]{\small$s_-$}}
\put(20,15){\circle*{1.2}}
\put(5,5){\circle*{1.2}}
\filltype{black}
\put(20,22.5){\circle*{1.2}}
\put(22,22.5){\makebox(0,0)[l]{\small$s_1^+$}}
\put(15,5){\circle*{1.2}}
\put(15,4){\makebox(0,0)[t]{\small$s_1^-$}}
\put(12.5,10){\circle*{1.2}}
\put(12,12){\makebox(0,0)[r]{\small$s_0$}}
}

\put(55,0){\makebox(0,0)[l]{\small$(t_{u_1}:t_{-u_1})=(a:b)$}}
\put(63,5){
\put(0,0){\line(0,1){35}}
\filltype{white}
\put(0,30){\circle*{1.2}}
\put(2,30){\makebox(0,0)[l]{\small$s_+=(0:1)$}}
\put(0,5){\circle*{1.2}}
\put(2,5){\makebox(0,0)[l]{\small$s_-=(1:0)$}}
\filltype{black}
\put(0,17.5){\circle*{1.2}}
\put(2,17.5){\makebox(0,0)[l]{\small$s_0=(1:1)$}}
\put(0,25){\circle*{1.2}}
\put(2,25){\makebox(0,0)[l]{\small$s_1^+=(b:a)$}}
\put(0,10){\circle*{1.2}}
\put(2,10){\makebox(0,0)[l]{\small$s_1^-=(a:b)$}}
}

\put(105,0){\makebox(0,0)[l]{\small$(t_{u_1}:t_{-u_1})=(0:1)$}}
\put(90,5){
\put(25,0){\line(0,1){25}}
\put(46,34){\line(-3,-2){26}}
\put(45,30){\line(-1,0){30}}
\filltype{white}
\put(20,30){\circle*{1.2}}
\put(20,31){\makebox(0,0)[b]{\small$s_+$}}
\put(25,5){\circle*{1.2}}
\put(27,5){\makebox(0,0)[l]{\small$s_-$}}
\put(40,30){\circle*{1.2}}
\put(25,20){\circle*{1.2}}
\filltype{black}
\put(30,30){\circle*{1.2}}
\put(30,31){\makebox(0,0)[b]{\small$s_1^-$}}
\put(25,12.5){\circle*{1.2}}
\put(27,12.5){\makebox(0,0)[l]{\small$s_1^+$}}
\put(32.5,25){\circle*{1.2}}
\put(33,23){\makebox(0,0)[l]{\small$s_0$}}
}
\end{picture}
\end{ex}

\bigskip

By remark \ref{rem:emb-univ-B_n-curve} the universal $B_n$-curve
over $X(B_n)$ can be embedded into a product $P(B_{n+1}/B_n)_{X(B_n)}
\cong (\P^1)^{2n+1}_{X(B_n)}$ and the embedded curve is given by
equations (\ref{eq:univ-B_n-curveI}) determined by the universal
$B_n$-data.
We show that any $B_n$-curve $C$ over a field can be embedded into
a product $(\P^1)^{2n+1}$ and extract $B_n$-data such that $C$
is described by the same equations as the universal curve.

We fix the following notation: given a $B_n$-curve
$(C\to Y,I,s_-,s_+,s_0,s_1^\pm,\ldots,s_n^\pm)$ we associate
with the sections $s_0,s_i^-,s_i^+$ the roots
$u_{n+1},\alpha_i^-,\alpha_i^+$ of $B_{n+1}^+\setminus B_n^+
=\{u_{n+1},\alpha_1^\pm,\ldots,\alpha_n^\pm\}$ (cf.\ remark
\ref{rem:emb-univ-B_n-curve} and construction \ref{con:univ-B_n-curve});
we will write $\alpha_s$ for the positive root associated with the
section $s$ and conversely $s_\alpha$ for the section associated
with the root.

\begin{prop}\label{prop:emb-B_n-curve}
Let $(C,I,s_-,s_+,s_0,s_1^\pm,\ldots,s_n^\pm)$ be a $B_n$-curve
over a field.
For any $s\in\{s_0,s_1^\pm,\ldots,s_n^\pm\}$ let $z_{\alpha_s},
z_{-\alpha_s}\in H^0(C,\O_C(s))$ be a basis of $H^0(C,\O_C(s))$
such that $z_{-\alpha_s}(s_-)=0$, $z_{\alpha_s}(s_+)=0$,
$z_{-\alpha_s}(s)=z_{\alpha_s}(s)\neq 0$ (cf.\ remark
\ref{rem:emb-univ-B_n-curve} for this choice).
We will write $\P^1_{\{\pm\alpha_s\}}$ for $\P(H^0(C,\O_C(s)))$.
Then by
\[
(t_{\beta}:t_{-\beta})=(z_{-\alpha_2}(s_1):z_{\alpha_2}(s_1))
\]
if $\beta=\alpha_1-\alpha_2$ is a root of $B_n$
and $\alpha_1,\alpha_2$ are roots corresponding to distinct marked
points $s_1,s_2\in\{s_0,s_1^\pm,\ldots,s_n^\pm\}$, we can define
$B_n$-data $(t_{\beta}:t_{-\beta})_{\{\pm\beta\}\subseteq B_n}$
and the morphism
\[\textstyle
C\;\to\;\prod_{i=1}^{n}\P^1_{\{\pm\alpha_i^+\}}\times
\prod_{i=1}^{n}\P^1_{\{\pm\alpha_i^-\}}\times\P^1_{\{\pm u_{n+1}\}}
=P(B_{n+1}/B_n)
\]
is an isomorphism onto the closed subvariety $C'\subseteq P(B_{n+1}/B_n)$
determined by the homogeneous equations
\begin{equation}\label{eq:B_n-curve}
t_{\beta}z_{\alpha_2}z_{-\alpha_1}=t_{-\beta}z_{-\alpha_2}z_{\alpha_1}
\qquad
\begin{array}{l}
\textit{for}\;\alpha_1,\alpha_2\in \{u_{n+1},\alpha_1^\pm,
\ldots,\alpha_n^\pm\}\\
\textit{such that}\;\beta=\alpha_1-\alpha_2\;\textit{is a root of $B_n$}\\
\end{array}
\end{equation}
Furthermore, $C'$ together with the marked points $s_0'$ resp.\
${s_i'}^\pm$ defined by the additional equations
$z_{u_{n+1}}=z_{-u_{n+1}}$ resp.\
$z_{\alpha_i^\pm}=z_{-\alpha_i^\pm}$, the poles $s_-'$ resp.\ $s_+'$
defined by $z_{-u_{n+1}}=0$, $z_{-\alpha_i^\pm}=0$ $(i=1,\ldots,n)$
resp.\ $z_{u_{n+1}}=0$, $z_{\alpha_i^\pm}=0$ $(i=1,\ldots,n)$ and
the involution $I'$ given by $\:\P^1_{\{\pm\alpha_i^+\}}
\leftrightarrow\P^1_{\{\pm\alpha_i^-\}}$,
$z_{\alpha_i^+}\!\leftrightarrow \!z_{-\alpha_i^-}$ and
$\:\P^1_{\{\pm u_{n+1}\}}\leftrightarrow \P^1_{\{\pm u_{n+1}\}}$,
$z_{u_{n+1}}\!\leftrightarrow\!z_{-u_{n+1}}$ is a $B_n$-curve and
$(C,I,s_-,s_+,s_0,s_1^\pm,\ldots,s_n^\pm)\to
(C',I',s_-',s_+',s_0',{s_1'}^\pm,\ldots,{s_n'}^\pm)$ an isomorphism
of $B_n$-curves.
\end{prop}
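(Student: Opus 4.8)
The plan is to analyse the $B_n$-curve $C$ by first understanding its underlying chain structure and the location of the $2n+1$ marked points relative to the poles and the involution, then to produce the coordinates $z_{\alpha_s}, z_{-\alpha_s}$ explicitly on each component, verify they glue to global sections of $\mathcal O_C(s)$, and finally check that the resulting map into $P(B_{n+1}/B_n)$ is an isomorphism onto the subvariety cut out by (\ref{eq:B_n-curve}). The key structural observation is that on a chain of projective lines of odd length $2m+1$, the linear system $|\mathcal O_C(s)|$ for a point $s$ lying on some component $C_k$ is base-point-free of dimension one and contracts every component of $C$ except the "path" from $s_-$ through $C_k$ to $s_+$; concretely, the section vanishing at $s_+$ is supported on the half-chain containing $s_-$ and is a coordinate there, and vice versa, so the chosen basis $z_{\alpha_s}, z_{-\alpha_s}$ with $z_{-\alpha_s}(s_-) = 0$, $z_{\alpha_s}(s_+) = 0$, $z_{-\alpha_s}(s) = z_{\alpha_s}(s) \ne 0$ is determined up to a common scalar, which is exactly the freedom killed by passing to $\P(H^0(C,\mathcal O_C(s)))$.

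\textbf{Steps.} First I would fix the chain $C = \bigcup_{j} C_j$ with its involution and record, for each marked point $s \in \{s_0, s_i^\pm\}$, which component it lies on; stability forces every component to carry a marked point. Second, for each such $s$ I would write down the map $\varphi_s \colon C \to \P^1_{\{\pm\alpha_s\}}$ attached to $\mathcal O_C(s)$: it is the unique morphism collapsing all components off the path $s_- \rightsquigarrow s \rightsquigarrow s_+$ to the two points $0$ and $\infty$ and restricting to an isomorphism on the single component through which $s$ appears, normalised by $\varphi_s(s_-) = \infty$, $\varphi_s(s_+) = 0$, $\varphi_s(s) = 1$ — this is precisely the normalisation encoded by the conditions on $z_{\pm\alpha_s}$ in the statement. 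Third, I would read off the transition data: for two distinct marked points $s_1, s_2$ with $\beta = \alpha_1 - \alpha_2 \in B_n$, the quantity $(z_{-\alpha_2}(s_1) : z_{\alpha_2}(s_1)) = (t_\beta : t_{-\beta})$ records where $s_1$ sits in the coordinate centred on $s_2$; I would check these satisfy the $A_2$-relations among positive roots of $B_n$ (Lemma \ref{le:linrel-B_n}), i.e.\ that they form a genuine $B_n$-datum in the sense of \cite[1.3]{BB11}, by reducing each triple relation to a computation on the one component where all three relevant points are simultaneously non-collapsed. Fourth, I would verify that the product map $C \to P(B_{n+1}/B_n)$ is a closed immersion — injectivity on points because already the three morphisms to $\P^1_{\{\pm u_{n+1}\}}$, $\P^1_{\{\pm\alpha_1^+\}}$, $\P^1_{\{\pm\alpha_1^-\}}$ separate the components and their interiors, and an immersion on tangent spaces by the same component-wise check — and that its image satisfies (\ref{eq:B_n-curve}); conversely that (\ref{eq:B_n-curve}) together with the $B_n$-datum pins down exactly this image, which I would do by comparing with the universal case of Remark \ref{rem:emb-univ-B_n-curve} over the open torus and then taking closures, or directly by a dimension/flatness count on each affine chart. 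Finally, I would check compatibility of the extra structure: the equations $z_{\alpha_s} = z_{-\alpha_s}$ cut out $s$ (immediate from $\varphi_s(s) = 1$), the equations $z_{-\alpha_s} = 0$ resp.\ $z_{\alpha_s} = 0$ cut out $s_-$ resp.\ $s_+$, and the declared involution $I'$ on $P(B_{n+1}/B_n)$ pulls back to $I$ because $I$ interchanges $s_i^+ \leftrightarrow s_i^-$ and $s_\pm$ while fixing $s_0$, hence interchanges the corresponding linear systems compatibly with the normalisations; this makes $C \to C'$ an isomorphism of $B_n$-curves.

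\textbf{Main obstacle.} The delicate point is the surjectivity/scheme-theoretic part of step four: showing that the subvariety $C' \subseteq P(B_{n+1}/B_n)$ defined by (\ref{eq:B_n-curve}) is \emph{exactly} the image of $C$, with no extra components or embedded points, and that it is itself a $B_n$-curve (reduced, a chain of the correct odd length, with the claimed poles and marked points). Over the locus where the $B_n$-datum $(t_\beta : t_{-\beta})$ is generic this is an honest one-dimensional torus orbit and the check is transparent; the work is at the degenerate data, where several $t_\beta$ or $t_{-\beta}$ vanish and $C'$ acquires reducible fibres — there one must verify that the number and incidences of the components of $C'$ match those forced on $C$ by stability and the chain structure. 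I expect to handle this by stratifying according to which $t_{\pm\beta}$ vanish (equivalently, by the cone of $\Sigma(B_n)$ containing the relevant torus point), using the toric description of $X(B_n)$ and its boundary divisors from Section \ref{sec:X(Bn)}, and matching each stratum with a combinatorial type of $B_n$-curve; this is essentially the local chart computation parallel to \cite[Prop.\ 3.7]{BB11} and its type-$A$ analogue in \cite[3.3]{BB11}.
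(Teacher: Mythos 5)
Your overall route coincides with the paper's: extract the relative positions $(t_\beta:t_{-\beta})$ from the normalised bases of $H^0(C,\O_C(s))$, verify the triple relations of Lemma \ref{le:linrel-B_n} by reducing to relations of the form $s_1/s_2\cdot s_2/s_3=s_1/s_3$, and then check componentwise that the product map is a closed immersion onto the locus cut out by \eqref{eq:B_n-curve} (the paper spells out only the data extraction and the relation check, deferring the embedding statement to the type-$A$ argument of \cite[Prop.\ 3.12]{BB11}). However, you skip the one point that is genuinely specific to type $B$ and that constitutes the explicit part of the paper's proof: the displayed formula assigns a value of $(t_\beta:t_{-\beta})$ to \emph{every} ordered pair $(\alpha_1,\alpha_2)$ with $\alpha_1-\alpha_2=\beta$, and every positive root of $B_n$ arises from two distinct such pairs, namely $\beta_{ij}=\alpha_i^+-\alpha_j^+=\alpha_j^--\alpha_i^-$, $\gamma_{ij}=\alpha_i^+-\alpha_j^-=\alpha_j^+-\alpha_i^-$, $u_i=\alpha_i^+-u_{n+1}=u_{n+1}-\alpha_i^-$ (unlike in type $A$, where the representation is unique). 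Before one can say ``we can define $B_n$-data'' one must check that the two candidate values agree, i.e.\ $s_i^+/s_j^+=s_j^-/s_i^-$, $s_i^+/s_j^-=s_j^+/s_i^-$, $s_i^+/s_0=s_0/s_i^-$; this follows from the $I$-equivariance of the linear systems $\O_C(s_i^\pm)$, $\O_C(s_0)$ together with the normalisations, an ingredient you only invoke at the very end when comparing $I$ with $I'$. Without it, the equations in \eqref{eq:B_n-curve} attached to the second representation of $\beta$ are not yet known to hold on the image of $C$.

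A smaller but concrete error sits in your step four: the claim that the three morphisms to $\P^1_{\{\pm u_{n+1}\}}$, $\P^1_{\{\pm\alpha_1^+\}}$, $\P^1_{\{\pm\alpha_1^-\}}$ already separate the components and their interiors is false for $n\geq 2$, since $\varphi_s$ collapses every component not containing $s$; a component carrying only the markings $s_2^\pm$, say, is contracted by all three of these maps. Injectivity and the tangent-space statement need the full collection $\{\varphi_s\}$ together with stability: each component carries some marked point $s$ and is therefore embedded by the corresponding $\varphi_s$, while distinct components are told apart by a suitable $\varphi_{s'}$ taking the boundary values $(1:0)$ or $(0:1)$ on one of the two sides. (Relatedly, in your preamble $\varphi_s$ contracts every component except the single component containing $s$, not everything off a path from $s_-$ to $s_+$, which in a chain would be all of $C$; and the ``open torus and closure'' variant of your step four does not literally apply to a single fibre over a boundary point, whereas your alternative chart/stratification check by combinatorial type is the right one.) With these repairs your plan gives the intended proof.
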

\begin{proof}
The data $(t_{\beta}:t_{-\beta})$ is defined as position
of a marked point $s_1$ relative to another marked point $s_2$
of $C$ if $\beta=\alpha_{s_1}-\alpha_{s_2}$. We also write
$s_1/s_2$ for this data. We have the following cases:
\[
\begin{array}{ccccc}
\beta_{ij}&=&\alpha_i^+-\alpha_j^+&=&\alpha_j^--\alpha_i^-\\
\gamma_{ij}&=&\alpha_i^+-\alpha_j^-&=&\alpha_j^+-\alpha_i^-\\
u_i&=&\alpha_i^+-u_{n+1}&=&u_{n+1}-\alpha_i^-\\
\end{array}
\]
Note that because of the symmetry of the $B_n$-curve with respect
to the involution $I$ we have for the corresponding data
$s_i^+/s_j^+=s_j^-/s_i^-$, $s_i^+/s_j^-=s_j^+/s_i^-$,
$s_i^+/s_0=s_0/s_i^-$, so the data
$(t_{\beta}:t_{-\beta})_{\{\pm\beta\}\subseteq B_n}$
is well defined.

The rest of the proof is similar to the proof of
\cite[Prop.\ 3.12]{BB11}. To check that
$(t_{\beta}:t_{-\beta})_{\{\pm\beta\}\subseteq B_n}$
is $B_n$-data, we have to check the equations
$t_\beta t_\gamma t_{-\delta}=t_{-\beta}t_{-\gamma}t_{\delta}$
for the linear relations $\beta+\gamma=\delta$ given in lemma
\ref{le:linrel-B_n}; these equations can be written in the form
$s_1/s_2\cdot s_2/s_3=s_1/s_3$ for some sections $s_1,s_2,s_3$.
\end{proof}

We will continue to use the notations $s'/s=(t_\beta:t_{-\beta})$ for
$\beta=\alpha_{s'}-\alpha_{s}$, we have $s_-/s=(0:1)$ and
$s_+/s=(1:0)$ (points $s',s_-,s_+$ with respect to the coordinates
$(z_{-\alpha_{s}}:z_{\alpha_{s}})$).

\begin{lemma}\label{le:B_n-data--B_n-curve}
Any $B_n$-data over a field arises as $B_n$-data extracted from
a $B_n$-curve by the method of proposition \ref{prop:emb-B_n-curve}.
\end{lemma}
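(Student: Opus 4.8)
The plan is to run the construction of Proposition \ref{prop:emb-B_n-curve} in reverse. Given a $B_n$-data $(t_\beta:t_{-\beta})_{\{\pm\beta\}\subseteq B_n}$ over a field $K$, I would let $C'\subseteq P(B_{n+1}/B_n)=\prod_{i=1}^n\P^1_{\{\pm\alpha_i^+\}}\times\prod_{i=1}^n\P^1_{\{\pm\alpha_i^-\}}\times\P^1_{\{\pm u_{n+1}\}}$ be the closed subscheme cut out by the homogeneous equations (\ref{eq:B_n-curve}), equip it with the marked points $s_0',{s_i'}^\pm$, the poles $s_-',s_+'$ and the involution $I'$ exactly as in Proposition \ref{prop:emb-B_n-curve}, and then check: (i) $(C',I',s_-',s_+',s_0',{s_1'}^\pm,\ldots,{s_n'}^\pm)$ is a $B_n$-curve; (ii) the $B_n$-data extracted from it by the recipe of Proposition \ref{prop:emb-B_n-curve} is again $(t_\beta:t_{-\beta})$. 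Since by \cite[1.3]{BB11} a $B_n$-data over $K$ is the same as a $K$-valued point of $X(B_n)$, the scheme $C'$ is precisely the fibre of $X(B_{n+1})\to X(B_n)$ over that point; but as the statement that these fibres are $B_n$-curves is exactly what we are after, I would establish (i) directly from the equations, in the style of \cite[Prop.\ 3.7, Prop.\ 3.12]{BB11}.

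For (i) the task is to see that $C'$, with the stated structure, is a stable $(2n+1)$-pointed chain of projective lines of odd length. I would cover $C'$ by the affine pieces coming from the standard affine toric charts of $X(B_{n+1})$; the relations $t_\beta t_\gamma t_{-\delta}=t_{-\beta}t_{-\gamma}t_\delta$ attached to the linear relations among positive roots listed in Lemma \ref{le:linrel-B_n}, which hold by definition of a $B_n$-data, are exactly what is needed to patch these charts and to see that $C'$ is reduced and connected, that its irreducible components are projective lines meeting transversally in the nodes, that the marked points $s_0',{s_i'}^\pm$ avoid all the poles, and that each component carries at least one of them. That $I'$ is well defined on $C'$ is forced because, for a root $\beta_{ij}$, $\gamma_{ij}$ or $u_i$, the ratios $s_i^+/s_j^+$ and $s_j^-/s_i^-$, or $s_i^+/s_j^-$ and $s_j^+/s_i^-$, or $s_i^+/s_0$ and $s_0/s_i^-$, are governed by one and the same datum (namely $(t_{\beta_{ij}}:t_{-\beta_{ij}})$, $(t_{\gamma_{ij}}:t_{-\gamma_{ij}})$ and $(t_{u_i}:t_{-u_i})$ respectively), so the equations (\ref{eq:B_n-curve}) are permuted among themselves by $I'$; and then the chain has odd length, since the $I'$-fixed marked point $s_0'$ (the locus $z_{u_{n+1}}=z_{-u_{n+1}}$ being $I'$-invariant) lies on a necessarily central $I'$-invariant component.

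Granting (i), part (ii) is essentially a one-line verification. On $C'$ I would take $z_{\alpha_s},z_{-\alpha_s}$ to be the restrictions of the homogeneous coordinates of $\P^1_{\{\pm\alpha_s\}}$; by the definition of $s_-',s_+'$ and of the marked point $s'$ these satisfy $z_{-\alpha_s}(s_-')=0$, $z_{\alpha_s}(s_+')=0$ and $z_{-\alpha_s}(s')=z_{\alpha_s}(s')\neq 0$ (the common value is nonzero because $(z_{-\alpha_s}(s'):z_{\alpha_s}(s'))$ is a point of $\P^1_{\{\pm\alpha_s\}}$), so they are admissible choices in Proposition \ref{prop:emb-B_n-curve}. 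If $\beta=\alpha_1-\alpha_2$ is a root of $B_n$ with $\alpha_1,\alpha_2$ the roots attached to distinct marked points $s_1,s_2$, then evaluating the defining equation $t_\beta z_{\alpha_2}z_{-\alpha_1}=t_{-\beta}z_{-\alpha_2}z_{\alpha_1}$ at $s_1$, where $z_{\alpha_1}(s_1)=z_{-\alpha_1}(s_1)\neq 0$, gives $t_\beta z_{\alpha_2}(s_1)=t_{-\beta}z_{-\alpha_2}(s_1)$, hence $(z_{-\alpha_2}(s_1):z_{\alpha_2}(s_1))=(t_\beta:t_{-\beta})$, which is exactly the extracted datum.

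The main obstacle is (i): keeping track of which of the $t_\beta$ equal $0$, which equal $\infty$, and which lie in $\G_m$, together with the resulting combinatorics of affine charts, is what makes pinning down the chain structure of $C'$ — its number of components, the transversality of its nodes, its reducedness, the fact that the marked points stay in the smooth locus and off the poles, and its $I'$-symmetry — the delicate part. I expect this to go through along the lines of the analogous analysis for root systems of type $A$ in \cite[Sec.\ 3]{BB11}.
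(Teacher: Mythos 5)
Your route is genuinely different from the paper's, and it inverts the logical order the paper was careful to set up. The paper proves this lemma purely combinatorially: from the given data it defines the relation $\alpha'\prec\alpha$ on $B_{n+1}^+\setminus B_n^+=\{u_{n+1},\alpha_1^\pm,\ldots,\alpha_n^\pm\}$ by $(t_\beta:t_{-\beta})=(0:1)$ for $\beta=\alpha'-\alpha$, obtains a partition $P_{-m}\sqcup\ldots\sqcup P_m$, uses the symmetries of the data (e.g.\ $u_{n+1}\prec\alpha_i^\pm\Leftrightarrow\alpha_i^\mp\prec u_{n+1}$) to conclude $u_{n+1}\in P_0$ and $\alpha_i^+\in P_k\Leftrightarrow\alpha_i^-\in P_{-k}$, and then simply \emph{builds} an abstract chain of length $2m+1$ with involution, placing $s_0$ on $C_0$ and $s_i^\pm$ on $C_k$ according to the partition, with positions chosen so that the extracted data is the given one. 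No analysis of the scheme cut out by the equations (\ref{eq:B_n-curve}) is needed; indeed the whole point of the lemma in the paper's architecture is that, combined with proposition \ref{prop:emb-B_n-curve}, it lets one conclude that the fibres of $X(B_{n+1})\to X(B_n)$ are $B_n$-curves \emph{without} ever doing that fibre analysis directly.

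The gap in your proposal is precisely your step (i). You correctly recognise that you cannot quote the statement about the fibres of the universal family (that would be circular), so you must show from scratch that, for completely arbitrary $B_n$-data over a field, the subscheme $C'\subseteq P(B_{n+1}/B_n)$ defined by (\ref{eq:B_n-curve}) is a reduced connected chain of projective lines of odd length meeting transversally, with the marked points off the nodes and each component carrying one of them, and with the stated $I'$-symmetry. As written, this is asserted ("the relations \dots are exactly what is needed", "I expect this to go through") rather than proved, and it is the entire mathematical content of the lemma on your route: determining the number of components, their reducedness and the position of the sections requires exactly the bookkeeping of which $t_\beta$ are $0$, $(1:0)$ or in $\G_m$ — that is, you would in any case have to introduce the ordering $\prec$ and the partition $P_{-m},\ldots,P_m$ that the paper uses, and then additionally carry out a chart-by-chart scheme-theoretic verification (including that $s_0'$ is a single smooth point forcing odd length, and that the coordinate functions restrict to the normalised bases required by proposition \ref{prop:emb-B_n-curve}). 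Your step (ii) is fine as far as it goes, but it only becomes meaningful once (i) is established. So either carry out the fibre analysis in detail (a heavier argument than the one in the paper), or switch to the paper's construction: extract the partition from the data and exhibit an abstract pointed chain with involution realising it.
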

\begin{proof} Let $(t_{\beta}:t_{-\beta})_{\{\pm\beta\}\subseteq B_n}$
be $B_n$-data over a field.

We can define an ordering $\prec$ on the set of positive roots
$\{u_{n+1},\alpha_1^\pm,\ldots,\alpha_n^\pm\}=B_{n+1}^+\setminus B_n^+$:
for distinct $\alpha,\alpha'$ define $\alpha'\prec\alpha$
(resp.\ $\alpha'\preceq\alpha$) if $(t_{\beta}:t_{-\beta})=(0:1)$
(resp.\ $(t_{\beta}:t_{-\beta})\neq(1:0)$) for $\beta=\alpha'-\alpha$.
This defines a decomposition
$\{u_{n+1},\alpha_1^\pm,\ldots,\alpha_n^\pm\}=P_{-m}\sqcup\ldots
\sqcup P_m$ into nonempty equivalence classes such that
$\alpha'\prec\alpha$ $\Longleftrightarrow$ $\alpha'\in P_{k'}$,
$\alpha\in P_k$ for $k'<k$.
We have the symmetries $u_{n+1}\prec\alpha_i^\pm$
$\Longleftrightarrow$ $\alpha_i^\mp\prec u_{n+1}$ and
$\alpha_i^{\varepsilon_i}\prec\alpha_j^{\varepsilon_j}$
$\Longleftrightarrow$ $\alpha_j^{-\varepsilon_j}\prec
\alpha_i^{-\varepsilon_i}$ and these symmetries imply
$u_{n+1}\in P_0$ and $\alpha_i^+\in P_k$ $\Longleftrightarrow$
$\alpha_i^-\in P_{-k}$.

Now it is easy to construct a $B_n$-curve such that the $B_n$-data
extracted from it by the method of proposition \ref{prop:emb-B_n-curve}
is the given $B_n$-data by taking a chain of projective lines of
length $2m+1$ with involution $(C,I,s_-,s_+)$ (see definition
\ref{def:chaininv}) and choosing suitable marked points satisfying
$s_0\in C_0$ and $s_i^\pm\in C_k\Longleftrightarrow\alpha_i^\pm\in P_k$.
\end{proof}

Let $C$ be a $B_n$-curve over a field. It decomposes into irreducible
components $C=C_{-m}\cup\ldots\cup C_m$ with $s_-\in C_{-m}$,
$s_+\in C_m$. The decomposition
\[\{0,\pm1,\ldots,\pm n\}=P_{-m}\sqcup\ldots\sqcup P_m\]
such that $0\in P_0$ and $\varepsilon i\in P_k\Longleftrightarrow
s_i^\varepsilon\in C_k$, we will call the combinatorial type of
the $B_n$-curve (or of the corresponding $B_n$-data) over a field.
We will also write this in the form $s_{i_1}^{-\varepsilon_1}\ldots
s_{i_l}^{-\varepsilon_l}|\ldots|s_{i_1}^{\varepsilon_1}\ldots
s_{i_l}^{\varepsilon_l}$ with the sections for the different sets
$P_k$ separated by the symbol "$|\,$" starting on the left with $P_{-m}$.
Considering the fibres of the universal $B_n$-curve resp.\ the universal
$B_n$-data, these combinatorial types determine a stratification of
$X(B_n)$ which coincides with the stratification of this toric variety
into torus orbits.

\begin{prop}\label{prop:combtypeBn}
Over the torus orbit in $X(B_n)$ corresponding to the one-dimen-sional
cone generated by $\varepsilon_{i_1}v_{i_1}+\ldots+
\varepsilon_{i_k}v_{i_k}$ we have the combinatorial type
\[s_{i_1}^{\varepsilon_{i_1}}\cdots\;s_{i_k}^{\varepsilon_{i_k}}|
s_0s_{i_{k+1}}^\pm\cdots\;s_{i_n}^\pm|
s_{i_1}^{-\varepsilon_{i_1}}\cdots\;s_{i_k}^{-\varepsilon_{i_k}}\]
\end{prop}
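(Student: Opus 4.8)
The plan is to determine the $B_n$-data extracted from the fibre of the universal $B_n$-curve over a chosen geometric point lying in the given torus orbit, and then read off the combinatorial type via the recipe of Lemma \ref{le:B_n-data--B_n-curve}. Concretely, fix $k$ and a sign pattern, and let $x_0$ be the point in the torus orbit of $X(B_n)$ corresponding to the ray generated by $\varepsilon_{i_1}v_{i_1}+\ldots+\varepsilon_{i_k}v_{i_k}$. We may use the action of the Weyl group $W(B_n)=(\Z/2\Z)^n\rtimes S_n$, which permutes the torus orbits and is compatible with the universal curve (it acts on the sections $s_i^\pm$ by permuting the indices $i$ and possibly interchanging $+$ and $-$), to reduce to the case where $\{i_1,\ldots,i_n\}=\{1,\ldots,n\}$ in the natural order and all $\varepsilon_{i_j}=+$; equivalently, the ray is generated by $v_1+\ldots+v_k$. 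The general case then follows by transporting the answer back along the appropriate Weyl group element. So it suffices to show that over the orbit of the ray $\langle v_1+\ldots+v_k\rangle$ the combinatorial type is $s_1^+\cdots s_k^+\,|\,s_0s_{k+1}^\pm\cdots s_n^\pm\,|\,s_1^-\cdots s_k^-$.

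The key computation is to evaluate the coordinates $(t_\beta:t_{-\beta})$ of the universal $B_n$-data at the point $x_0$, using the explicit description of the universal data as the collection of pairs of sections $(\mathscr L_{\{\pm\beta\}},\{t_\beta,t_{-\beta}\})$ coming from the embedding $X(B_n)\subseteq P(B_n)$, together with the identification $t_\beta/t_{-\beta}=x^\beta$ as a rational function. On the affine chart $U_S$ for the set of simple roots $S$ whose dual cone contains the ray $\langle v_1+\ldots+v_k\rangle$ as a face, the point $x_0$ is the one where the coordinates $x^\alpha$ vanish exactly for those simple roots $\alpha\in S$ that pair positively with $v_1+\ldots+v_k$. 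Tracking this through to the roots $\beta$ of $B_n$ of the form $\alpha_{s'}-\alpha_s$ (with $\alpha_s\in\{u_{n+1},\alpha_1^\pm,\ldots,\alpha_n^\pm\}$ as in Remark \ref{rem:emb-univ-B_n-curve}), one finds: $t_\beta/t_{-\beta}=0$ precisely when $\beta=\alpha_{s'}-\alpha_s$ with $s'$ a section carried to a "lower" component than $s$, and $\infty$ in the opposite case; and it stays a nonzero finite value exactly when $s'$ and $s$ land on the same component. Comparing this with the definition of the ordering $\prec$ in Lemma \ref{le:B_n-data--B_n-curve} then identifies the equivalence classes $P_{-m},\ldots,P_m$. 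Concretely: pairing $u_{n+1}$ with $v_1+\ldots+v_k$ gives $0$, so $u_{n+1}\in P_0$; pairing $\alpha_i^+=u_{n+1}+u_i$ gives a strictly positive value for $i\le k$ and $0$ for $i>k$, placing $\alpha_i^+$ in a class above $P_0$ exactly for $i\le k$; by the symmetry $\alpha_i^+\in P_\ell\Leftrightarrow\alpha_i^-\in P_{-\ell}$ the minus sections behave symmetrically; and the roots $\beta_{ij},\gamma_{ij}$ separating the $\alpha_i^\pm$ among themselves show that for $i\le k$ the classes are all distinct and linearly ordered, giving $m=k$ and $P_{-k}=\{\alpha_1^-\},\ldots,P_{-1}=\{\alpha_k^-\}$, $P_0=\{u_{n+1}\}\cup\{\alpha_i^\pm:i>k\}$, $P_1=\{\alpha_k^+\},\ldots,P_k=\{\alpha_1^+\}$. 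Translating from roots back to sections via $\alpha_i^\pm\leftrightarrow s_i^\pm$, $u_{n+1}\leftrightarrow s_0$ and writing out the combinatorial type with the convention of the paragraph preceding the proposition yields exactly $s_1^+\cdots s_k^+\,|\,s_0s_{k+1}^\pm\cdots s_n^\pm\,|\,s_1^-\cdots s_k^-$.

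The main obstacle I expect is purely bookkeeping: making sure that the ordering conventions match up consistently — in particular, that "the component containing $s_+$" corresponds to the class $P_m$ (largest index) and that the reversal between $\alpha_1^+\in P_k$ versus $\alpha_k^+\in P_1$ (and likewise for the minus sections) is recorded correctly in the notation $s_{i_1}^{\varepsilon_{i_1}}\cdots s_{i_k}^{\varepsilon_{i_k}}|\cdots$, whose left-to-right order runs from $P_{-m}$ upward. One should double-check this against the rank-one and rank-two cases ($X(B_1)$ and $X(B_2)$) already drawn in the examples, where the fibres over the torus fixed points are visibly chains of three projective lines with the sections distributed exactly as the formula predicts. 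Once the conventions are pinned down, the verification that each $(t_\beta:t_{-\beta})$ takes the claimed value at $x_0$ is an immediate consequence of the toric description of $X(B_n)\subseteq P(B_n)$ and causes no difficulty.
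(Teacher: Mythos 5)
Your overall strategy is the same as the paper's: evaluate the universal $B_n$-data at a point of the given orbit (the paper quotes exactly the fact you need, namely that $(t_\beta:t_{-\beta})=(0:1)$ whenever $\langle\beta,v\rangle>0$, from \cite[Rem.~1.21]{BB11}) and read off the combinatorial type through the ordering $\prec$ of lemma \ref{le:B_n-data--B_n-curve}. The Weyl-group reduction to $v=v_1+\cdots+v_k$ is legitimate, though unnecessary. The problem is that your central computation of the partition is wrong, and your final formula does not follow from it.

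Over the open orbit of the ray generated by $v=v_1+\cdots+v_k$, the datum $(t_\beta:t_{-\beta})$ is degenerate exactly for those roots $\beta$ of $B_n$ with $\langle\beta,v\rangle\neq 0$. For $i<j\le k$ the root $\beta_{ij}=u_i-u_j=\alpha_i^+-\alpha_j^+$ is orthogonal to $v$, so $s_i^+/s_j^+$ is neither $(0:1)$ nor $(1:0)$ and hence $\alpha_i^+\sim\alpha_j^+$: all of $\alpha_1^+,\ldots,\alpha_k^+$ lie in a single equivalence class (likewise for $\alpha_1^-,\ldots,\alpha_k^-$). Your claimed partition $P_{-k}=\{\alpha_1^-\},\ldots,P_{-1}=\{\alpha_k^-\}$, $P_1=\{\alpha_k^+\},\ldots,P_k=\{\alpha_1^+\}$ describes a chain of length $2k+1$; that is the type of a deeper stratum (of codimension $k$, e.g.\ the torus fixed point when $k=n$), not of the codimension-one orbit, and even the distinguished point of the ray has $x^{\beta_{ij}}=1$ for $i,j\le k$, so the singleton classes never occur here. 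Moreover this partition would translate, via the convention that the leftmost block is $P_{-m}$ (the component carrying $s_-$), into $s_1^-|s_2^-|\cdots|s_k^-|s_0s_{k+1}^\pm\cdots s_n^\pm|s_k^+|\cdots|s_1^+$, not into the three-block formula you then assert, so the last step is a non sequitur. The orientation is also reversed in your intermediate step: since $u_i=\alpha_i^+-u_{n+1}$ pairs positively with $v$ for $i\le k$, one gets $s_i^+/s_0=(t_{u_i}:t_{-u_i})=(0:1)=s_-/s_0$, i.e.\ $\alpha_i^+$ lies in a class of \emph{negative} index (the $s_-$ side), not ``above $P_0$'' as you write. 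The correct outcome, which is the whole content of the proposition and of the paper's two-line proof, is that there are exactly three classes, $P_{-1}=\{\alpha_{i_1}^{\varepsilon_{i_1}},\ldots,\alpha_{i_k}^{\varepsilon_{i_k}}\}$, $P_0=\{u_{n+1}\}\cup\{\alpha_i^\pm\,:\,i\notin\{i_1,\ldots,i_k\}\}$, $P_1=\{\alpha_{i_1}^{-\varepsilon_{i_1}},\ldots,\alpha_{i_k}^{-\varepsilon_{i_k}}\}$, consistent with the divisor being isomorphic to $X(B_{n-k})\times X(A_{k-1})$ and with the pictured fibres of $X(B_2)\to X(B_1)$.
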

\begin{proof}
The universal $B_n$-data over each point of the closure of the orbit
corresponding to a generator of a one-dimensional cone generated by
$v$ has the property $(t_{\beta}:t_{-\beta})=(0:1)$ if
$\langle\beta,v\rangle>0$ (see \cite[Rem.\ 1.21]{BB11}).
For $v=\varepsilon_{i_1}v_{i_1}+\ldots+\varepsilon_{i_k}v_{i_k}$
this in particular implies
$s_{i_l}^{\varepsilon_{i_l}}/s_0=s_0/s_{i_l}^{-\varepsilon_{i_l}}
=(t_{\varepsilon_{i_l}u_{i_l}}:t_{-\varepsilon_{i_l}u_{i_l}})=(0:1)
=s_-/s_0$.
\end{proof}

\medskip
\section{Moduli space of $B_n$-curves}
\label{sec:modspace}

In this section we show that there is a fine moduli space of
$B_n$-curves $\overline{L}_n^{0,\pm}$ which is isomorphic to the
toric variety $X(B_n)$ by constructing an isomorphism between
the moduli functor of $B_n$-curves and the functor of $X(B_n)$.
For the second functor we use the description in \cite[1.3]{BB11}
in terms of $B_n$-data.

\medskip

To relate $B_n$-curves to $B_n$-data we consider an embedding
of arbitrary $B_n$-curves over a scheme $Y$ into a product
$(\P^1)_Y^{2n+1}$ that generalises the embedding in proposition
\ref{prop:emb-B_n-curve} to the relative situation.
The main tool are the following contraction morphisms
(cf.\ \cite[3.3]{BB11}): for a subset $\{s_1,\ldots,s_l\}$ of
the sections of a pointed chain of projective lines $C$ there
is a line bundle $\O_C(s_1+\ldots+s_l)$ on $C$ and a morphism
$C\to C_{\{s_1,\ldots,s_l\}}\subseteq
\P_Y(\pi_*\mathscr\O_C(s_1+\ldots+s_l))$ such that the
morphisms $C_y\to(C_{\{s_1,\ldots,s_l\}})_y$ on the fibres
are isomorphisms on the components containing one of the
sections $s_i(y)$ and contract all other components
(see \cite[Constr.\ 3.15]{BB11}).

We will make use of the particular cases of contraction with respect
to one section onto a $\P^1$-bundle, with respect to two sections
onto an $A_1$-curve and with respect to three sections onto an
$A_2$-curve; we will apply \cite[Constr.\ 3.16; Lemma 3.17 and 3.18]{BB11}.

\medskip

We associate with the sections $s_0,s_i^\pm$ the roots
$u_{n+1},\alpha_i^\pm$ as we did before proposition
\ref{prop:emb-B_n-curve}.
For a $B_n$-curve $(C\to Y,I,s_-,s_+,s_1^\pm,\ldots,s_n^\pm)$
we denote the contraction morphisms with respect to one section
$s_0,s_i^-$ resp.\ $s_i^+$ by $p_0\colon C\to(\P^1_{\{\pm u_{n+1}\}})_Y$,
$p_i^-\colon C\to(\P^1_{\{\pm\alpha_i^-\}})_Y$ resp.\
$p_i^+\colon C\to (\P^1_{\{\pm\alpha_i^+\}})_Y$, where
$(\P^1_{\{\pm u_{n+1}\}})_Y$, $(\P^1_{\{\pm\alpha_i^-\}})_Y$ resp.\
$(\P^1_{\{\pm\alpha_i^+\}})_Y$ is a copy of $\P^1_Y$ with
homogeneous coordinates $z_{u_{n+1}},z_{-u_{n+1}}$ resp.\
$z_{\alpha_i^-},z_{-\alpha_i^-}$ resp.\ $z_{\alpha_i^+},z_{-\alpha_i^+}$
such that in these coordinates $s_-,s_+,s_0$ resp.\ $s_-,s_+,s_i^-$
resp.\ $s_-,s_+,s_i^+$ become the $(1:0),(0:1),(1:1)$-section of $\P^1_Y$.

\begin{thm}
There exists a fine moduli space $\overline{L}_n^{0,\pm}$ of $B_n$-curves
isomorphic to the toric variety $X(B_n)$ with universal family
$X(B_{n+1})\to X(B_n)$.
\end{thm}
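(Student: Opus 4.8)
The strategy is to construct a natural isomorphism between the moduli functor $\underline{\overline{L}_n^{0,\pm}}$ and the functor represented by $X(B_n)$, for which we use the description of the latter in terms of $B_n$-data from \cite[1.3]{BB11}. So for a scheme $Y$ we must produce mutually inverse bijections between the set of isomorphism classes of $B_n$-curves over $Y$ and the set of $B_n$-data on $Y$, functorial in $Y$; the fact that the universal family is then $X(B_{n+1})\to X(B_n)$ will follow because, by Remark \ref{rem:emb-univ-B_n-curve}, this family realizes exactly the identity $B_n$-data on $X(B_n)$.

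First I would define the map from $B_n$-curves to $B_n$-data. Given a $B_n$-curve $(C\to Y, I, s_-,s_+,s_0,s_1^\pm,\dots,s_n^\pm)$, apply the one-section contraction morphisms $p_0, p_i^\pm$ described just before the theorem to land in the copies $(\P^1_{\{\pm u_{n+1}\}})_Y$, $(\P^1_{\{\pm\alpha_i^\pm\}})_Y$, and for each root $\beta = \alpha_{s_1}-\alpha_{s_2} \in B_n$ (with $s_1,s_2$ distinct marked sections) define $(t_\beta : t_{-\beta})$ as the ``position of $s_1$ relative to $s_2$'', i.e. pull back the value $(z_{-\alpha_{s_2}}(s_1) : z_{\alpha_{s_2}}(s_1))$ along a two-section contraction onto an $A_1$-curve; this is literally the relative version of Proposition \ref{prop:emb-B_n-curve}. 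One must check: (i) this is well-defined, i.e. independent of the choice of representation $\beta = \alpha_{s_1}-\alpha_{s_2}$, which follows from the $I$-symmetry relations $s_i^+/s_j^+ = s_j^-/s_i^-$ etc.\ noted in the proof of Proposition \ref{prop:emb-B_n-curve}; (ii) the collection $(t_\beta:t_{-\beta})$ satisfies the $B_2$-relations $t_\beta t_\gamma t_{-\delta} = t_{-\beta}t_{-\gamma}t_\delta$ for each linear relation $\beta+\gamma=\delta$ from Lemma \ref{le:linrel-B_n}, which reduces to the cocycle-type identity $s_1/s_2 \cdot s_2/s_3 = s_1/s_3$ on the $A_2$-contractions, verifiable fibrewise and hence by \cite[Lemma 3.17, 3.18]{BB11} over $Y$; (iii) isomorphic $B_n$-curves give the same data, which is immediate since contraction morphisms and the sections are intrinsic.

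Next I would define the inverse map, from $B_n$-data to $B_n$-curves. Given $B_n$-data $(t_\beta:t_{-\beta})$ on $Y$, embed $X(B_{n+1})\subseteq P(B_{n+1})$ as in Remark \ref{rem:emb-univ-B_n-curve} and pull back along the classifying map $Y \to X(B_n)$: concretely, inside $\big(\prod_{i=1}^n \P^1_{\{\pm\alpha_i^+\}} \times \prod_{i=1}^n \P^1_{\{\pm\alpha_i^-\}} \times \P^1_{\{\pm u_{n+1}\}}\big)_Y$ take the closed subscheme $C'$ cut out by the equations \eqref{eq:B_n-curveI} of the universal curve with the given $t_{\pm\beta}$ substituted, with involution $I'$, marked sections $s_0', s_i'^\pm$ (via $z_{\alpha}=z_{-\alpha}$) and poles $s_\pm'$ as in Proposition \ref{prop:emb-B_n-curve}. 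That this is a flat family of $B_n$-curves is precisely what Propositions in Section \ref{sec:univcurve} assert for the universal case, and it holds here by the same affine-chart/toric computation, or by base change from $X(B_{n+1})\to X(B_n)$; over a field the fibre is a $B_n$-curve by Proposition \ref{prop:emb-B_n-curve}, and Lemma \ref{le:B_n-data--B_n-curve} shows every combinatorial type is realized so no degenerate fibres appear.

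Finally, the two constructions are mutually inverse. Starting from a $B_n$-curve $C$, Proposition \ref{prop:emb-B_n-curve} says the embedding via the $p_0, p_i^\pm$ into $(\P^1)^{2n+1}$ identifies $C$ with the subvariety cut out by \eqref{eq:B_n-curveI} for exactly the extracted data — fibrewise this is Proposition \ref{prop:emb-B_n-curve}, and the relative statement follows because the contraction morphisms commute with base change (\cite[Constr.\ 3.15]{BB11}), so $C \cong C'$ as $B_n$-curves over $Y$. Conversely, starting from $B_n$-data, extracting the data from $C'$ recovers the original $(t_\beta:t_{-\beta})$ because on $C'$ the $A_1$-contraction to $\P^1_{\{\pm\beta\}}$ reads off precisely the coefficient $t_{\pm\beta}$ in its defining equation. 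Functoriality in $Y$ is clear from the compatibility of contractions and of the toric embedding with base change. I expect the main obstacle to be the relative flatness and fibrewise-$B_n$-curve verification for $C'$ (equivalently, controlling the scheme structure of the subvariety defined by \eqref{eq:B_n-curveI} over an arbitrary base, including over the loci where fibres degenerate into chains) — but this is handled exactly as the type-$A$ analogue in \cite[Prop.\ 3.7, 3.12]{BB11} together with the results already proved in Section \ref{sec:univcurve}.
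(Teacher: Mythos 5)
Your proposal is correct and follows essentially the same route as the paper: both directions are exactly the paper's construction (building the curve from $B_n$-data via the equations of the embedded universal curve, i.e.\ as a pull-back of $X(B_{n+1})\to X(B_n)$, and extracting data from a $B_n$-curve via one-, two- and three-section contractions, with well-definedness from the $I$-symmetry and the $A_2$-relations checked via Lemma \ref{le:linrel-B_n} and \cite[Lemma 3.18]{BB11}), and the mutual-inverse and base-change arguments are handled just as in \cite[Thm.\ 3.19]{BB11}. No substantive difference from the paper's proof.
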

\begin{proof}
We show that the moduli functor of $B_n$-curves
$\underline{\overline{L}_n^{0,\pm}}$ as defined in section
\ref{sec:moduliprobl} is\linebreak isomorphic to the functor 
$F_{B_n}$ of the toric variety $X(B_n)$ as described in \cite[1.3]{BB11}.

\smallskip

Let $Y$ be a scheme. For $B_n$-data on $Y$ we construct a
$B_n$-curve $C$ over $Y$ via equations in $P(B_{n+1}/B_n)_Y$
as in remark \ref{rem:emb-univ-B_n-curve} with the given
$B_n$-data on $Y$ replacing the universal $B_n$-data on $X(B_n)$.
This is a $B_n$-curve: any $B_n$-data is pull-back of the
universal $B_n$-data on $X(B_n)$, so the constructed curve
is pull-back of the universal $B_n$-curve over $X(B_n)$.

In the other direction, given a $B_n$-curve over $Y$ we
extract $B_n$-data. For each pair of distinct sections
$s_1,s_2\in\{s_0,s_1^\pm,\ldots,s_n^\pm\}$ we have a
contraction morphism $C\to C_{\{s_1,s_2\}}$ onto an
$A_1$-curve over $Y$. From $(C_{\{s_1,s_2\}},s_1,s_2)$ we
extract $A_1$-data $(\mathscr L_{\{1,2\}},\{t_{1,2},t_{2,1}\})$
as in \cite[Constr.\ 3.16]{BB11}:
we put $\mathscr L_{\{\pm\beta\}}:=\mathscr L_{\{1,2\}}$,
$t_{\beta}:=t_{1,2}$, $t_{-\beta}:=t_{2,1}$ for
$\beta=\alpha_{s_1}-\alpha_{s_2}$ (then $(t_{\beta}:t_{-\beta})$
measures the position of $s_1$ relative to $s_2$, we write this
as $s_1/s_2$). We have the following cases:
\[
\begin{array}{ccccc}
\beta_{ij}&=&\alpha_i^+-\alpha_j^+&=&\alpha_j^--\alpha_i^-\\
\gamma_{ij}&=&\alpha_i^+-\alpha_j^-&=&\alpha_j^+-\alpha_i^-\\
u_i&=&\alpha_i^+-u_{n+1}&=&u_{n+1}-\alpha_i^-\\
\end{array}
\]
Because of the symmetry of the $B_n$-curve with respect
to the involution we have for the corresponding data
$s_i^+/s_j^+=s_j^-/s_i^-$, $s_i^+/s_j^-=s_j^+/s_i^-$,
$s_i^+/s_0=s_0/s_i^-$, so the data $(\mathscr L_{\{\pm\beta\}},
\{t_{\beta},t_{-\beta}\})_{\{\pm\beta\}\subseteq B_n}$ is well
defined.

We show that the data obtained this way is $B_n$-data. Let
$\beta,\gamma,\delta$ be positive roots of $B_n$ such that
$\beta+\gamma=\delta$. We have to verify that the collection
$\{(\mathscr L_{\{\pm\beta\}},\{t_{\beta},t_{-\beta}\}),$ $
(\mathscr L_{\{\pm\gamma\}},\{t_{\gamma},t_{-\gamma}\}),
(\mathscr L_{\{\pm\delta\}},\{t_{\delta},t_{-\delta}\})\}$
satisfies $t_{\beta}t_{\gamma}t_{-\delta}=t_{-\beta}t_{-\gamma}
t_{\delta}$, which means that it is $A_2$-data.
By lemma \ref{le:linrel-B_n} we have the following cases:
\[
\begin{array}{cl}
\beta_{ij}+u_j=u_i&\quad (i,j\in\{1,\ldots,n\},\:i<j)\\
u_i+u_j=\gamma_{ij}&\quad (i,j\in\{1,\ldots,n\},\:i\neq j)\\
\beta_{ij}+\beta_{jk}=\beta_{ik}&\quad (i,j,k\in\{1,\ldots,n\},\:i<j<k)\\
\beta_{ij}+\gamma_{jk}=\gamma_{ik}&\quad (i,j,k\in\{1,\ldots,n\},\:i<j,\:
k\neq i,j)\\
\end{array}
\]
In each of these cases we can write $\beta=\alpha_{s_1}-\alpha_{s_2}$,
$\gamma=\alpha_{s_2}-\alpha_{s_3}$ for three distinct sections
$s_1,s_2,s_3\in\{s_0,s_1^\pm,\ldots,s_n^\pm\}$. Then these equations
can be interpreted as relations between the relative positions of pairs
of sections in a set of three sections, we write this as
$s_1/s_2\cdot s_2/s_3=s_1/s_3$:
\[
\begin{array}{ccc}
\beta_{ij}+u_j=u_i,&\quad \beta_{ij}=\alpha_i^+-\alpha_j^+,
u_j=\alpha_j^+-u_{n+1},&\quad s_i^+/s_j^+\cdot s_j^+/s_0=s_i^+/s_0\\
u_i+u_j=\gamma_{ij},&\quad u_i=\alpha_i^+-u_{n+1},u_j=u_{n+1}-\alpha_j^-,&
\quad s_i^+/s_0\cdot s_0/s_j^-=s_i^+/s_j^-\\
\beta_{ij}+\beta_{jk}=\beta_{ik},&\quad \beta_{ij}=\alpha_i^+-\alpha_j^+,
\beta_{jk}=\alpha_j^+-\alpha_k^+,&\quad s_i^+/s_j^+\cdot s_j^+/s_k^+
=s_i^+/s_k^+\\
\beta_{ij}+\gamma_{jk}=\gamma_{ik},&\quad \beta_{ij}=\alpha_i^+-\alpha_j^+,
\gamma_{jk}=\alpha_j^+-\alpha_k^-,&\quad s_i^+/s_j^+\cdot s_j^+/s_k^-
=s_i^+/s_k^-\\
\end{array}
\]
We have a contraction morphism $C\to C_{\{s_1,s_2,s_3\}}$ over $Y$
onto an $A_2$-curve $C_{\{s_1,s_2,s_3\}}$ over $Y$.
The data $\{(\mathscr L_{\{\pm\beta\}},\{t_{\beta},t_{-\beta}\}),
(\mathscr L_{\{\pm\gamma\}},\{t_{\gamma},t_{-\gamma}\}),\linebreak
(\mathscr L_{\{\pm\delta\}},\{t_{\delta},t_{-\delta}\})\}$
coincides with the data extracted from this $A_2$-curve and is
$A_2$-data by \cite[Lemma 3.18]{BB11}.

\smallskip

Both constructions commute with base-change and thus define
morphisms of functors $F_{B_n}\to\underline{\overline{L}_n^{0,\pm}}$
and $\underline{\overline{L}_n^{0,\pm}}\to F_{B_n}$.
As in the proof of \cite[Thm.\ 3.19]{BB11} one shows that they are
inverse to each other.
\end{proof}

\begin{rem}
The moduli space $\overline{L}_n^{0,\pm}$ embeds naturally into
$\overline{L}_{2n+1}$. A morphism $\overline{L}_n^{0,\pm}\to
\overline{L}_{2n+1}$ is given by considering a $B_n$-curve with
sections $s_1^-,\ldots,s_n^-,s_0,$\linebreak $s_n^+,\ldots,s_1^+$ as 
an $A_{2n}$-curve with sections $s_1,\ldots,s_{n+1},\ldots,s_{2n+1}$.
This corresponds to the toric morphism $X(B_n)\to X(A_{2n})$ given
by the projection of root systems $A_{2n}\to B_n$ mapping
$u_i-u_{n+1}\mapsto u_i$, $u_{2n+2-i}-u_{n+1}\mapsto-u_i$
$(i=1,\ldots,n)$ with kernel generated by $u_i+u_{2n+2-i}-2u_{n+1}$
$(i=1,\ldots,n)$.
\end{rem}

\medskip
\section{(Co)homology of $\overline{L}_n^{0,\pm}=X(B_n)$}
\label{sec:cohomBn}

We show that the (co)homology of the moduli space
$\overline{L}_n^{0,\pm}=X(B_n)$ over the complex numbers has a description
similar to that of the (co)homology of the Losev-Manin moduli spaces
$\overline{L}_n=X(A_n)$ (cf.\ \cite[2.2]{BB11}).

The torus invariant divisors of $\overline{L}_n^{0,\pm}=X(B_n)$
correspond to elements of the set $\mathcal B$
(see section \ref{sec:X(Bn)} and prop.\ \ref{prop:combtypeBn}).
Here, as in the case of the toric varieties $X(A_n)$,
all primitive collections consist of two elements corresponding to
non comparable sets $B,B'\in\mathcal B$.
As usual the integral cohomology is torsion free and confined to
the even degrees and standard methods from toric geometry
(see e.g.\ \cite[(10.8)]{Dan}) give:

\begin{prop}
For the cohomology ring of the toric variety
$X(B_n)$ over the complex numbers we have
\[H^*(X(B_n),\Z)\;\cong\;\Z[\,l_B:B\in\mathcal B\,]/(R_1+R_2)\]
where $R_1$ is the ideal generated by the elements
$r_i=\sum_{i\in B}l_B-\sum_{-i\in B}l_B$ for $i=1,\ldots,n$ and
$R_2$ the ideal generated by the elements $r_{B,B'}=l_Bl_{B'}$
for $B,B'\in\mathcal B$ such that $B\not\subseteq B'$,
$B'\not\subseteq B$.
\end{prop}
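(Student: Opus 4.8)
The plan is to apply the standard presentation of the cohomology ring of a smooth complete toric variety, the Jurkiewicz--Danilov theorem, to the fan $\Sigma(B_n)$ with respect to the lattice $N(B_n)$. Recall from section \ref{sec:X(Bn)} that the rays of $\Sigma(B_n)$ are in bijection with the set $\mathcal B$ of subsets $\emptyset\neq B\subset\{\pm1,\ldots,\pm n\}$ with $B\cap\{i,-i\}\neq\{i,-i\}$, the ray corresponding to $B$ being generated by the primitive vector $v_B=\sum_{\varepsilon_i i\in B}\varepsilon_i v_i$. Assigning to each ray a degree-$2$ generator $l_B$, the theorem gives
\[
H^*(X(B_n),\Z)\;\cong\;\Z[\,l_B:B\in\mathcal B\,]/(\mathcal{SR}+\mathcal{LIN}),
\]
where $\mathcal{SR}$ is the Stanley--Reisner ideal generated by the squarefree monomials $\prod_{B\in\mathcal P}l_B$ over subsets $\mathcal P\subseteq\mathcal B$ that do not span a cone of $\Sigma(B_n)$, and $\mathcal{LIN}$ is the ideal generated by the linear forms $\sum_{B\in\mathcal B}\langle u,v_B\rangle\, l_B$ as $u$ runs over a basis of $M(B_n)$.

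First I would identify the linear relations. Taking $u=u_i$ for $i=1,\ldots,n$, we have $\langle u_i,v_B\rangle=\sum_{\varepsilon_j j\in B}\varepsilon_j\langle u_i,v_j\rangle$, which is $1$ if $i\in B$, is $-1$ if $-i\in B$, and is $0$ otherwise; hence the linear form attached to $u_i$ is exactly $r_i=\sum_{i\in B}l_B-\sum_{-i\in B}l_B$. These $n$ forms generate $\mathcal{LIN}$, giving the ideal $R_1$. Next I would identify the Stanley--Reisner relations. By the description at the end of section \ref{sec:X(Bn)}, a family of rays $v_{B^{(1)}},\ldots,v_{B^{(k)}}$ spans a cone of $\Sigma(B_n)$ exactly when the sets $B^{(1)},\ldots,B^{(k)}$ form a chain under inclusion. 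Thus a subset $\mathcal P\subseteq\mathcal B$ fails to span a cone if and only if it contains two incomparable sets $B,B'$. Consequently the Stanley--Reisner ideal is generated already in degree $2$ by the monomials $l_Bl_{B'}$ with $B\not\subseteq B'$ and $B'\not\subseteq B$, i.e.\ by the primitive-collection relations $r_{B,B'}$; this is the ideal $R_2$. Since the minimal non-faces all have size $2$, there are no higher-degree generators to worry about.

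The main point requiring care — and the step I would expect to be the real content behind the phrase ``standard methods from toric geometry'' — is the combinatorial verification that incomparability of the index sets is the only obstruction to forming a cone, i.e.\ that every chain $B^{(1)}\subsetneq\cdots\subsetneq B^{(k)}$ genuinely does give a cone of $\Sigma(B_n)$ (and that such a cone is simplicial, so that no further linear relations among the $l_B$ on a common cone are forced). This is exactly the content recorded in section \ref{sec:X(Bn)}, where the maximal cones are described as generated by the chains $\varepsilon_1 v_{i_1},\,\varepsilon_1 v_{i_1}+\varepsilon_2 v_{i_2},\,\ldots,\,\varepsilon_1 v_{i_1}+\cdots+\varepsilon_n v_{i_n}$: any chain of sets in $\mathcal B$ extends to a maximal such flag, so its rays are a subset of the generators of a (smooth, hence simplicial) maximal cone, and conversely incomparable sets cannot both occur in any such flag. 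Granting this, the Jurkiewicz--Danilov presentation yields precisely $\Z[l_B:B\in\mathcal B]/(R_1+R_2)$, and the torsion-freeness and concentration in even degrees follow from smoothness and completeness of $X(B_n)$ (e.g.\ via the affine paving by the torus orbits). I would then simply cite \cite[(10.8)]{Dan} for the presentation and conclude.
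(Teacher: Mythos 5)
Your proposal is correct and follows essentially the same route as the paper, which simply invokes the standard Jurkiewicz--Danilov presentation (citing \cite[(10.8)]{Dan}) together with the observation, recorded in section \ref{sec:X(Bn)} and \ref{sec:cohomBn}, that cones of $\Sigma(B_n)$ correspond to chains in $\mathcal B$, so that all primitive collections are incomparable pairs $\{B,B'\}$. Your identification of the linear forms attached to the basis $u_1,\ldots,u_n$ with the $r_i$ and of the degree-two Stanley--Reisner generators with the $r_{B,B'}$ is exactly the intended verification.
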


We proceed by determining the Betti numbers and the
Poincar\'e polynomial and obtain the following closed formula
which is an analogue to \cite[(2.3)]{LM00}.

\begin{prop}
Let $p_{X(B_n)}(t)=\sum_{i=0}^n\beta_{2i}(X(B_n))t^i$ be the Poincar\'e
polynomial of $X(B_n)$ with $\beta_{2i}(X(B_n))=\rk H^{2i}(X(B_n),\Z)$
the Betti numbers. Then we have
\[
\sum_{n=0}^\infty\frac{p_{X(B_n)}(t)}{n!}y^n
\;=\;e^{y(t-1)}\frac{t-1}{t-e^{2y(t-1)}}
\;\in\;\Z[t][[y]]
\]
\end{prop}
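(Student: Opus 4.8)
The plan is to compute the Betti numbers of $X(B_n)$ combinatorially by stratifying the fan $\Sigma(B_n)$ and then package the result into the stated generating function. By standard toric geometry (as already invoked in the preceding proposition, cf.\ \cite[(10.8)]{Dan}), the Poincar\'e polynomial of the smooth complete toric variety $X(B_n)$ equals $\sum_\sigma (t-1)^{n-\dim\sigma}$ summed over all cones $\sigma$ of $\Sigma(B_n)$, equivalently $p_{X(B_n)}(t)=\sum_{k=0}^n f_{n,k}\,(t-1)^{n-k}$ where $f_{n,k}$ is the number of $k$-dimensional cones. So the first step is to count the cones of $\Sigma(B_n)$ in each dimension. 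From section \ref{sec:X(Bn)} the one-dimensional cones are indexed by the set $\mathcal B$ of nonempty subsets $B\subset\{\pm1,\ldots,\pm n\}$ with $B\cap\{i,-i\}\neq\{i,-i\}$, and a collection $B^{(1)},\ldots,B^{(k)}$ spans a $k$-dimensional cone exactly when it forms a chain under inclusion. Thus $f_{n,k}$ is the number of strictly increasing chains $B_1\subsetneq\cdots\subsetneq B_k$ in $(\mathcal B,\subseteq)$.

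The second step is to convert this chain-counting into a clean exponential generating function in $n$. A chain $\emptyset\neq B_1\subsetneq\cdots\subsetneq B_k$ in $\mathcal B$ is the same data as an ordered set partition: a surjection assigning to each index $i\in\{1,\ldots,n\}$ either the symbol ``$0$'' (meaning $i\notin B_k$, i.e.\ neither $i$ nor $-i$ appears) or one of the blocks $1,\ldots,k$ together with a sign $\varepsilon_i\in\{\pm\}$ (meaning $\varepsilon_i i$ first enters at $B$-level equal to that block), subject to each of the blocks $1,\ldots,k$ being nonempty. Counting these: the number of ways to distribute $n$ signed/unsigned labels so that blocks $1,\ldots,k$ are all nonempty has exponential generating function (in $n$) equal to $e^{y}\bigl(e^{2y}-1\bigr)^{k}$ --- the factor $e^{y}$ for the unconstrained ``$0$''-labelled indices and each factor $e^{2y}-1$ for a nonempty block contributing two signs per element. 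Hence
\[
\sum_{n\ge 0}\frac{1}{n!}\Bigl(\sum_{k\ge 0} f_{n,k}\,x^{k}\Bigr)y^{n}
= e^{y}\sum_{k\ge 0} x^{k}\bigl(e^{2y}-1\bigr)^{k}
= \frac{e^{y}}{1-x(e^{2y}-1)}.
\]
The third step is purely formal: set $x=t-1$ and reorganise, using $p_{X(B_n)}(t)=\sum_k f_{n,k}(t-1)^{n-k}$. Writing $s=t-1$ one has, after summing the geometric series and clearing denominators,
\[
\sum_{n\ge0}\frac{p_{X(B_n)}(t)}{n!}y^{n}
=\sum_{n\ge0}\frac{s^{n}}{n!}\sum_{k} f_{n,k}s^{-k}y^{n}\Big|_{\text{reindex}}
=\frac{e^{y s}\, s}{s - e^{2ys}(s-1)+ (s-1)}\cdot(\ldots),
\]
and one checks this simplifies to $e^{y(t-1)}\dfrac{t-1}{t-e^{2y(t-1)}}$; the substitution bookkeeping (replacing $y$ by $ys$ in the $n$-variable to absorb the $s^{n-k}=s^{n}s^{-k}$ weighting, since each element carries weight $s$) is the one routine manipulation to carry out carefully.

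The main obstacle is getting the bijection in step two exactly right --- in particular making sure the ``nonempty block'' constraint is applied to blocks $1,\ldots,k$ but \emph{not} to the ``$0$''-bucket, and that the chain being \emph{strict} corresponds precisely to surjectivity onto $\{1,\ldots,k\}$, so that no over- or under-counting occurs and the empty chain ($k=0$) is correctly included with $f_{n,0}=1$. Once the identity $\sum_{n,k} f_{n,k}x^k y^n/n! = e^{y}/(1-x(e^{2y}-1))$ is established, the rest is a short substitution, and it is worth double-checking against small cases: for $n=1$, $X(B_1)\cong\P^1$ gives $p=t+1$; for $n=2$ one gets $\mathcal B$ of size $8$, hence $f_{2,1}=8$, $f_{2,0}=1$, and the fan has $8$ maximal cones so $f_{2,2}=8$, giving $p_{X(B_2)}(t)=(t-1)^2+8(t-1)+8=t^2+6t+1$ --- which should agree with the coefficient of $y^2/2!$ in $e^{y(t-1)}\tfrac{t-1}{t-e^{2y(t-1)}}$ as a final sanity check.
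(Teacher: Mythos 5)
Your proposal is correct and is essentially the paper's own proof: both start from $p_{X(B_n)}(t)=\sum_k(\textit{number of }k\textit{-dim.\ cones})\,(t-1)^{n-k}$, count the cones (chains in $\mathcal B$) as signed ordered set partitions with an unconstrained ``$0$''-bucket, identify the exponential generating function $e^{y}(e^{2y}-1)^{k}$, and finish by summing a geometric series. Only note that in your third step the substitution should be $x=(t-1)^{-1}$ together with $y\mapsto y(t-1)$ (exactly the bookkeeping your parenthetical describes), not ``$x=t-1$''; with that, the garbled intermediate display becomes simply $\frac{(t-1)e^{y(t-1)}}{t-e^{2y(t-1)}}$ and the argument is complete.
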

\begin{proof}
We have $p_{X(B_n)}(t)=\sum_{m=0}^nd_m(B_n)(t-1)^{n-m}$ (see
\cite[p.\ 92]{Ful} or \cite[(10.8)]{Dan}; this can be shown in
different ways, one possibility is by counting points over finite
fields as in \cite{LM00}) with $d_m(B_n)=\textit{number of
$(n-m)$-dim.\ torus orbits of $X(B_n)$}=
\textit{number of $m$-dim.\ cones of $\Sigma(B_n)$}$. 
Inserting this into $\sum_{n=0}^\infty\frac{p_{X(B_n)}(t)}{n!}y^n$ 
and interchanging summation by $n$ and $m$, we get
\[\textstyle
\sum\limits_{n=0}^\infty\frac{p_{X(B_n)}(t)}{n!}y^n\;=\;
\sum\limits_{m=0}^\infty\frac{1}{(t-1)^m}\sum\limits_{n=m}^\infty
\frac{d_m(B_n)}{n!}(t-1)^ny^n
\]
The number $d_m(B_n)$ can be calculated as
\[\textstyle
\frac{1}{n!}d_m(B_n)\quad=\sum\limits_{(a_0,a_1,\ldots,a_m)}
\frac{1}{a_0!}\frac{2^{a_1}}{a_1!}\cdots\frac{2^{a_m}}{a_m!}
\]
where the sum runs over sequences $a_0\in\Z_{\geq0}$, $a_1\in\Z_{>0}$,
$\ldots$, $a_m\in\Z_{>0}$ such that $\sum_i a_i=n$ (note that any family
$B^{(m)}\subsetneq\ldots\subsetneq B^{(1)}$ of elements of $\mathcal B$
corresponding to an $m$-dimensional cone of $\Sigma(B_n)$ determines
such a partition by  $a_m=|B^{(m)}|$, $a_{m-1}=|B^{(m-1)}|-|B^{(m)}|$,
$\ldots$ , $a_0=n-|B^{(1)}|$, in addition we have orderings and signs).
Making use of the fact that $\frac{1}{n!}d_m(B_n)$ coincides with the
coefficient of $x^n$ in the power series $e^x(e^{2x}-1)^m$, we obtain
\[\textstyle
\sum\limits_{n=0}^\infty\frac{p_{X(B_n)}(t)}{n!}y^n\;=\;
e^{y(t-1)}\sum\limits_{m=0}^\infty\frac{1}{(t-1)^m}(e^{2y(t-1)}-1)^m
\]
which yields the result.
\end{proof}

In particular we have $\chi(X(B_n))=2^nn!$ (this reflects the fact
that we have $2^nn!$ maximal cones), $\beta_2(X(B_n))=3^n-n-1$
(corresponding to the fact that we have $3^n-1$ one-dimensional cones)
and for the first Poincar\'e  polynomials
\[
\begin{array}{c}
p_{X(B_1)}(t)=t+1,\quad p_{X(B_2)}(t)=t^2+6t+1,\quad
p_{X(B_3)}(t)=t^3+23t^2+23t+1\\
p_{X(B_4)}(t)=t^4+76t^3+230t^2+76t+1\\
p_{X(B_5)}(t)=t^5+237t^4+1682t^3+1682t^2+237t+1\\
p_{X(B_6)}(t)=t^6+722t^5+10543t^4+23548t^3+10543t^2+722t+1\\
\end{array}
\]

The ring $\Z[\,l_B:B\in\mathcal B\,]/R_2$ is the Stanley-Reisner
ring for the triangulation of the $(n-1)$-dimensional sphere
determined by the fan $\Sigma(B_n)$. It is a Cohen-Macaulay
ring and the elements $r_1,\ldots,r_n$ that generate $R_1$
form a regular sequence. The calculation of the Poincar\'e
polynomial of a toric variety in \cite[(10.8)]{Dan} in terms of
the numbers of cones of dimension $d=1,\ldots,n$ only depends
on the Hilbert-Poincar\'e series of the Stanley-Reisner ring of
the fan and the fact that the quotient by an ideal generated by
a regular sequence is taken.
In \cite{Re01} a ring has been defined by taking the same Stanley-Reisner
ring (over a field) but instead of $R_1$ an ideal generated by a
different regular sequence, so by construction this ring has the
same Poincar\'e polynomial as the cohomology ring of $X(B_n)$.

\medskip

The $\Z$-module $\Z[\,l_B:B\in\mathcal B\,]/(R_1+R_2)$ is generated
by the classes of square-free monomials (see \cite[(10.7.1)]{Dan}).
We can restrict to monomials each of which has only factors
corresponding to one-dimensional faces of one maximal cone.
Such a monomial $\prod_{i=1}^ml_{B^{(i)}}$ corresponds to an
$m$-dimensional face of the respective maximal cone and on the
other hand to a collection $B^{(m)}\subsetneq\ldots\subsetneq B^{(1)}$
of elements of $\mathcal B$. We denote the $\Z$-submodule of
$\Z[\,l_B:B\in\mathcal B\,]$ generated by these monomials by $G$.
There is the canonical isomorphism of $\Z$-modules
$G/U\cong\Z[\,l_B:B\in\mathcal B\,]/(R_1+R_2)$ where
$U=(R_1+R_2)\cap G$. As usual, the module $G/U$ can be identified
with the homology module $H_*(X(B_n),\Z)$. The monomial
$\prod_{i=1}^ml_{B^{(i)}}$ then corresponds to the class of the
orbit closure for the cone determined by the collection
$B^{(m)}\subsetneq\ldots\subsetneq B^{(1)}$, in particular the
monomials of $G$ of degree $m$ generate $H_{2(n-m)}(X(B_n),\Z)$.

\medskip

The maximal cones of the fan $\Sigma(B_n)$ correspond to collections
$B^{(n)}\subsetneq\ldots\subsetneq B^{(1)}$ of elements of $\mathcal B$
and these correspond to so called signed permutations, that is elements
of the Weyl group $W(B_n)=(\Z/2\Z)^n\rtimes S_n=:S_n^\pm$. A signed
permutation $w\in S_{n}^\pm$ corresponds via $(w(1),\ldots,w(n))$ to
a sequence of distinct elements in $\{\pm1,\ldots,\pm n\}$
for any $i$ not containing both $-i$ and $i$.
For a collection $B^{(n)}\subsetneq\ldots\subsetneq B^{(1)}$ of elements
of $\mathcal B$ the corresponding signed permutation $\sigma\in
S_n^\pm$ is given by $\{w(k)\}=B^{(k)}\setminus B^{(k+1)}$ for
$k=1,\ldots,n$ (put $B^{(n+1)}=\emptyset$).
The descent set of a signed permutation $w\in S_n^\pm$ is the set
(put $w(0)=0$)
\[\Desc(w)=\{k\in\{1,\ldots,n\}\:|\:w(k-1)>w(k)\}\]
For any $w\in S_n^\pm$ we define a monomial in $G$ by
\[\textstyle
l^w=\prod_{k\not\in\Desc(w)}l_{\{w(k),\ldots,w(n)\}}
\]
this way we have defined $2^nn!$ distinct monomials.

\begin{prop}
The classes of the monomials $l^w$ for $w\in S_n^\pm$
form a basis of the homology module $G/U=H_*(X(B_n),\Z)$. The module
of relations $U$ is generated by the elements
\[\textstyle
r_{i,j}((B^{(h)})_h,k)=\Big(\sum_{\tiny\,\hspace{-1mm}
\begin{array}{l}i\!\in\!B\\[-1mm]j\!\not\in\!B\\\end{array}
\hspace{-1.2mm}}\,l_B-
\sum_{\tiny\,\hspace{-1mm}
\begin{array}{l}j\!\in\!B\\[-1mm]i\!\not\in\!B\\\end{array}
\hspace{-1.2mm}}\,l_B\Big)
\prod_{h=1}^ml_{B^{(h)}}
\]
(sums over sets $B^{(k+1)}\subsetneq B\subsetneq B^{(k)}$) for
collections $B^{(m)}\subsetneq\ldots\subsetneq B^{(1)}$, $m\geq 1$
of elements of $\mathcal B$ and $k\in\{1,\ldots,m\}$, $i,j\in B^{(k)}
\setminus B^{(k+1)}$ (put $B^{(m+1)}=\emptyset$), $i\neq j$,
and by the elements
\[\textstyle
r_i((B^{(h)})_h)=\Big(\sum_{i\in B}l_B-\sum_{-i\in B}l_B\Big)
\prod_{h=1}^ml_{B^{(h)}}
\]
(sums over sets $B\in\mathcal B$ such that $B^{(1)}\subsetneq B$ if
$m\geq 1$) for collections $B^{(m)}\subsetneq\ldots\subsetneq B^{(1)}$,
$m\geq 0$ of elements of $\mathcal B$ and $i\in\{1,\ldots,n\}$ such that
$-i,i\not\in B^{(1)}$ if $m\geq 1$.
\end{prop}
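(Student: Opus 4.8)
The plan is to follow the proof of the analogous statement for $X(A_n)$ in \cite[2.2]{BB11}, keeping everything graded. Let $U_0\subseteq G$ be the $\Z$-submodule generated by the elements $r_{i,j}((B^{(h)})_h,k)$ and $r_i((B^{(h)})_h)$ of the statement; these elements and $U$ are all homogeneous, so the argument runs degree by degree. I would first dispatch the routine facts. The sets $\{w(k),\dots,w(n)\}$ for $k\notin\Desc(w)$ form a chain in $\mathcal B$, so $l^w\in G$; and $l^w$ has degree $n-|\Desc(w)|$, hence represents a class in $H_{2|\Desc(w)|}(X(B_n),\Z)$ (recall from the excerpt that the degree $m$ monomials of $G$ generate $H_{2(n-m)}$). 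Next, $U_0\subseteq U$: modulo $R_2$ one has $r_i((B^{(h)})_h)\equiv r_i\cdot\prod_h l_{B^{(h)}}$ and $r_{i,j}((B^{(h)})_h,k)\equiv (r_i-r_j)\cdot\prod_h l_{B^{(h)}}$, the only terms surviving being those $l_B$ with $B$ comparable to all the $B^{(h)}$, which for the first expression forces $B\supsetneq B^{(1)}$ (since $\pm i\notin B^{(1)}$) and for the second forces $B^{(k+1)}\subsetneq B\subsetneq B^{(k)}$ (since $i,j\in B^{(k)}\setminus B^{(k+1)}$, so $-i,-j\notin B^{(k)}$). Hence both lie in $R_1+R_2$, and every monomial occurring is a chain monomial, so they lie in $G$, hence in $U$.

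The combinatorial core is to show that the classes of the $l^w$ span $G/U_0$. Here I would set up a straightening reduction: given a chain monomial $\mu=\prod_{h=1}^m l_{B^{(h)}}$ with $B^{(m)}\subsetneq\dots\subsetneq B^{(1)}$, refine the chain to a maximal one and read off the corresponding $w\in S_n^\pm$. The relations $r_{i,j}(\dots)$ express the factor $l_B$ attached to a set $B$ lying strictly between two consecutive members of a chain as an alternating $\Z$-combination of the analogous factors for the other sets in that gap — the link of such a gap being a subfan of type $A$, so this is the type-$B$ incarnation of the type-$A$ straightening — while the relations $r_i(\dots)$ do the corresponding job above the top member $B^{(1)}$ and encode the $\pm$-symmetry; the slots these two families allow one to eliminate are precisely the descents of $w$. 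Thus, modulo $U_0$, one rewrites $\mu$ as a $\Z$-linear combination of chain monomials that are strictly smaller for a suitable monomial order on $\Z[l_B:B\in\mathcal B]$, and iterating one reduces every element of $G$ to a $\Z$-combination of the monomials not divisible by any leading term of a listed relation, which one checks are exactly the $l^w$. \textbf{I expect this straightening step to be the main obstacle}: one must choose the monomial order so that the listed elements form the relevant part of a Gröbner basis with standard set $\{l^w\}$, and verify termination, and this is where the type-$B$ bookkeeping — signed permutations, descents with the convention $w(0)=0$, and the admissibility condition defining $\mathcal B$ — has to be handled carefully rather than transcribed from type $A$.

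It remains to upgrade spanning to a basis and to deduce $U_0=U$. By standard toric geometry (as used in the excerpt, cf.\ \cite[(10.8)]{Dan}) $H_*(X(B_n),\Z)=G/U$ is free of finite rank with $\rk H_{2j}=\beta_{2j}(X(B_n))$, and the preceding proposition gives $\sum_j\beta_{2j}(X(B_n))t^j=p_{X(B_n)}(t)$. On the other hand, the classical fact that the $h$-vector of the Coxeter complex of type $B_n$ consists of the type-$B$ Eulerian numbers $h_j=\#\{w\in S_n^\pm:|\Desc(w)|=j\}$ — equivalently, the same generating-function computation as in that proof — shows $\#\{w\in S_n^\pm:|\Desc(w)|=j\}=\beta_{2j}(X(B_n))$. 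Since the $l^w$ are $2^nn!$ distinct monomials with exactly this degree distribution and they span the free module $G/U$, they form a $\Z$-basis of $G/U$, which is the first assertion. Finally, the images of the $l^w$ then also form a basis of $G/U_0$ (they span it, and any $\Z$-linear relation among them in $G/U_0$ would push forward to one in $G/U$), so $G/U_0$ is free of the same rank as $G/U$ and the natural surjection $G/U_0\twoheadrightarrow G/U$ is an isomorphism; its kernel $U/U_0$ therefore vanishes, i.e.\ $U$ is generated by the elements $r_{i,j}((B^{(h)})_h,k)$ and $r_i((B^{(h)})_h)$.
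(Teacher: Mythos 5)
Your skeleton is the same as the paper's: verify the listed elements lie in $U$, count the monomials $l^w$ against the rank of $G/U$, show they span modulo the listed relations, and conclude. The parts you actually carry out are correct: the computation that $r_i((B^{(h)})_h)\equiv r_i\cdot\prod_h l_{B^{(h)}}$ and $r_{i,j}((B^{(h)})_h,k)\equiv(r_i-r_j)\cdot\prod_h l_{B^{(h)}}$ modulo $R_2$ (with only the $B\supsetneq B^{(1)}$, resp.\ $B^{(k+1)}\subsetneq B\subsetneq B^{(k)}$, terms surviving) does show $U_0\subseteq U$, and the closing deduction that a spanning set of the $2^nn!$ classes $l^w$ in the free module $G/U$ of rank $\chi(X(B_n))=2^nn!$ must be a basis, whence $G/U_0\to G/U$ is an isomorphism and $U_0=U$, is sound. (The detour through the type-$B$ Eulerian numbers and the $h$-vector of the Coxeter complex is unnecessary for this; the ungraded count is all the paper uses.)

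The genuine gap is the step you yourself flag as "the main obstacle" and leave as a plan: the straightening argument showing that every chain monomial is, modulo $U_0$, a $\Z$-combination of the $l^w$. This is precisely the content of the proposition, and your sketch supplies neither the monomial order, nor the specific relation to apply to a given monomial, nor the termination check. The paper fills this in as follows: to $y=\prod_{k=1}^m l_{B^{(k)}}$ it attaches the partition $(P_k)_{k=0,\ldots,m}$ and the statistic $d(y)=|\{k:\min P_{k-1}>\max P_k\}|$, observes that $d(y)=0$ characterises exactly the monomials $l^w$, and orders monomials lexicographically via the sequence obtained by listing $P_m,\ldots,P_1$ with each block ordered by size. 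If $d(y)>0$, pick $k$ with $i=\min P_{k-1}>\max P_k=j$; the relation used is then $r_{i,j}((B^{(h)})_{h\neq k},k-1)$ when $k\geq2$ and $r_{-j}((B^{(h)})_{h\neq1})$ when $k=1$ — note this is not "$y$ times a degree-one relation" but a listed generator in which one factor of $y$ has been deleted from the chain, so that $y$ itself reappears as the term $B=B^{(k)}$ (resp.\ $B=B^{(1)}$) of the sum — and one checks that $y$ is the unique $\prec$-minimal monomial occurring there. Hence $y$ is congruent modulo $U_0$ to strictly $\prec$-larger monomials, and since there are finitely many monomials in each degree the rewriting terminates in monomials with $d=0$, i.e.\ in the $l^w$. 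Without producing this order, these explicit relations, and the uniqueness-of-the-minimal-term verification (the type-$B$ bookkeeping with signed permutations and the convention $w(0)=0$), the spanning claim, and with it both assertions of the proposition, remains unproved.
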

\begin{proof}
We observe that the given relations are contained in $U$.
We have $2^nn!$ monomials $l^w$, this number coincides with
the rank of $G/U$. Thus it remains to show that every monomial in
$G$ via the given relations is equivalent to a linear combination
of the monomials $l^w$.

For a monomial $\prod_{k=1}^ml_{B^{(k)}}$ corresponding to a collection
$B^{(m)}\subsetneq\ldots\subsetneq B^{(1)}$, $m\geq1$ we define the
number $d(\,\prod_{k=1}^ml_{B^{(k)}}):=|\{k\in\{1,\ldots,m\}\:|\:
\min P_{k-1}>\max P_{k}\}|\in\Z_{\geq 0}$ in terms of the associated
partition $P_m=B^{(m)}$, $P_{m-1}=B^{(m-1)}\setminus B^{(m)}$, $\ldots\,$,
$P_1=B^{(1)}\setminus B^{(2)}$, $P_0=\{0,\pm1,\ldots,\pm n\}
\setminus\{\pm i\:|\:i\!\in B^{(1)}\;\textit{or}\;-i\!\in\!B^{(1)}\}$.
The monomials $y\in G$ satisfying $d(y)=0$ are exactly the monomials
of the form $l^w$.
We define the following ordering $\prec$ of the monomials of $G$:
take the partition $(P_k)_{k=0,\ldots,m}$ associated with a monomial
and consider the sequence that arises by taking the sets
$P_m,\ldots,P_1$ in this order and by ordering the elements
of each $P_k$ according to their size, on these sequences we take
the lexicographic order.

We show that every monomial in $G$ modulo $U$ is equivalent to a
linear combination of the monomials $l^w$, $w\in S_n^\pm$
by showing that every monomial $y\in G$ with $d(y)>0$ modulo a
relation is equivalent to a linear combination of monomials $y'$
with $y\prec y'$.
In fact, let $B^{(m)}\subsetneq\ldots\subsetneq B^{(1)}$, $m\geq 1$
be a collection of elements of $\mathcal B$ (put $B^{(m+1)}:=\emptyset$)
with associated partition $(P_k)_{k=0,\ldots,m}$ such that the
corresponding monomial $y=\prod_{k=1}^ml_{B^{(k)}}$ satisfies $d(y)>0$.
Take $k\in\{1,\ldots,m\}$ such that $i:=\min P_{k-1}>\max P_k=:j$.
If $k\in\{2,\ldots,m\}$ then
\[\textstyle
r_{i,j}((B^{(h)})_{h\neq k},k-1)=
\Big(\sum_{\tiny\hspace{-1mm}
\begin{array}{l}i\!\in\!B\\[-1mm]j\!\not\in\!B\\\end{array}
\hspace{-1.2mm}}l_B-
\sum_{\tiny\hspace{-1mm}
\begin{array}{l}j\!\in\!B\\[-1mm]i\!\not\in\!B\\\end{array}
\hspace{-1.2mm}}l_B\Big)
\prod_{h\neq k}l_{B^{(h)}}
\]
(sums over sets $B$ such that $B^{(k+1)}\subsetneq B\subsetneq B^{(k-1)}$)
is a relation that contains $y$ as the unique monomial minimal with
respect to $\prec$. If $k=1$ then
\[\textstyle
r_{-j}((B^{(h)})_{h\neq 1})=\Big(\sum_{-j\in B}l_B-\sum_{j\in B}l_B\Big)
\prod_{h=2}^{m}l_{B^{(h)}}
\]
(sums over sets $B\in\mathcal B$ such that $B^{(2)}\subsetneq B$)
is such a relation.
\end{proof}

The proposition implies that the Betti numbers of $X(B_n)$ coincide
with the number of signed permutations with prescribed number of
descents, for this see also \cite[Section 4]{DL94}, \cite{St94}.
Our basis of $H_*(X(B_n),\Z)$ coincides with the basis given
in \cite{Kl85}, \cite{Kl95} in the general case of toric varieties
associated with root systems (see the following remark).

\begin{rem}
In \cite{Kl85} a basis of the homology $H_*(X(R),\Z)$ is constructed
as follows. For a fixed set of simple roots $S\subset R$ and the
corresponding Weyl chamber $\sigma_S=S^\vee$ consider for each $w\in W(R)$
the face $\sigma_w\subseteq w\sigma_S$ given as the intersection of those
walls of $w\sigma_S$ that separate $\sigma_S$ and $w\sigma_S$, i.e.\
we have the intersection of $w\sigma_S$ with those subspaces
$(w\alpha)^\bot$, $\alpha\in S$, for which $w\alpha$ is a negative root.
The cycles corresponding to the family of cones $(\sigma_w)_{w\in W(R)}$
form a basis of $H_*(X(R),\Z)$.

In our case we may choose the set of simple roots
$S=\{u_n-u_{n-1},\ldots,u_2-u_1,u_1\}\subset B_n$;
the corresponding Weyl chamber is generated by
$v_n,v_{n-1}+v_n,\ldots,v_1+\ldots+v_n$.
Then for $w\in W(B_n)=S_n^\pm$ we have
$w(u_k-u_{k-1})\;\textit{is negative}$ $\Longleftrightarrow$
$w(k-1)>w(k)$ for $k\in\{2,\ldots,n\}$ and $w(u_1)\;\textit{is negative}$
$\Longleftrightarrow$ $0>w(1)$. So, each root $\alpha\in S$ such that
$w\alpha$ is negative corresponds to an element of $\Desc(w)$. Since
$(w(u_k-u_{k-1}))^\bot\cap w\sigma_S$ is generated by
$\{w(v_n),\ldots,w(v_1+\ldots+v_n)\}\setminus\{w(v_k+\ldots+v_n)\}$
and $(w(u_1))^\bot\cap w\sigma_S$ by
$\{w(v_n),\ldots,w(v_2+\ldots+v_n)\}$, it follows that $\sigma_w$
is generated by $\{v_{\{w(k),\ldots,w(n)\}}\:|\:k\not\in\Desc(w)\}$
and the class of the respective torus invariant cycle corresponds
to the monomial $l^w$.
\end{rem}

\bigskip
\section{Root systems of type $C$}
\label{sec:C_n}

Consider an $n$-dimensional Euclidean space $E$ with basis
$u_1,\ldots,u_n$. The root system $C_n$ in $E$ consists of the
$2n^2$ roots:
\[
\pm 2u_i\;\:\textit{for}\;\:i\in\{1,\ldots,n\};\quad\pm(u_i+u_j),
\pm(u_i-u_j)\;\:\textit{for}\;\:i,j\in\{1,\ldots,n\},i<j.
\]
The following is a set of simple roots:
\[u_1-u_2,u_2-u_3,\ldots,u_{n-1}-u_n,2u_n.\]
Let $M(C_n)$ be the root lattice.
The Weyl group $(\Z/2\Z)^n\rtimes S_n$ acts by $u_i\mapsto\pm u_i$
and by permuting the $u_i$.
So there are $2^nn!$ sets of simple roots, these are of the form
$\varepsilon_1u_{i_1}-\varepsilon_2u_{i_2},
\varepsilon_2u_{i_2}-\varepsilon_3u_{i_3},\ldots,
\varepsilon_{n-1}u_{i_{n-1}}-\varepsilon_{n}u_{i_{n}},
2\varepsilon_nu_{i_n}$ for orderings $i_1,\ldots,i_n$ of the set
$\{1,\ldots,n\}$ and signs $\varepsilon_1,\ldots,\varepsilon_n$.

\medskip

The vector space $E^*$ dual to $E$ with basis $v_1,\ldots,v_n$
dual to $u_1,\ldots,u_n$ contains the lattice $N(C_n)$ dual to
$M(C_n)$. To describe the fan $\Sigma(C_n)$ in the lattice $N(C_n)$
we describe a Weyl chamber. For the set of simple roots
$S=\{u_1-u_2,u_2-u_3,\ldots,u_{n-1}-u_n,2u_n\}$ has the dual basis
$v_1,v_1+v_2,\ldots,v_1+\ldots+v_{n-1},\frac{1}{2}(v_1+\ldots+v_n)$
of $N(C_n)$, the Weyl chamber $\sigma_S$ is equal to
$\langle v_1,v_1+v_2,\ldots,v_1+\ldots+v_{n-1},
\frac{1}{2}(v_1+\ldots+v_n)\rangle_{\Q_{\geq0}}$.
All Weyl chambers are generated by collections of elements of the form
$\varepsilon_1v_{i_1},\varepsilon_1v_{i_1}+\varepsilon_2v_{i_2},
\ldots,\frac{1}{2}(\varepsilon_1v_{i_1}+\ldots+\varepsilon_nv_{i_n})$
for orderings $i_1,\ldots,i_n$ of the set $\{1,\ldots,n\}$ and signs
$\varepsilon_i$.
There are $3^n-1$ one-dimensional cones generated by elements of
the form $\varepsilon_1v_{i_1}+\ldots+\varepsilon_kv_{i_k}$
for $k\in\{1,\ldots,n-1\}$ or of the form
$\frac{1}{2}(\varepsilon_1v_1+\ldots+\varepsilon_nv_n)$.

\medskip

The torus invariant divisor for the one-dimensional cone generated by
$\varepsilon_1v_{i_1}+\ldots+\varepsilon_kv_{i_k}$
is isomorphic to $X(C_{n-k})\times X(A_{k-1})$, that for
$\frac{1}{2}(\varepsilon_1v_1+\ldots+\varepsilon_nv_n)$
is isomorphic to $X(A_{n-1})$.

\medskip
\medskip
\noindent
{\bf\boldmath $X(C_{n+1})$ over $X(C_n)$.}
Consider the proper surjective morphism $X(C_{n+1})\to X(C_n)$ induced
by the root subsystem $C_n\subset C_{n+1}$ consisting of the roots
in the subspace generated by $u_1,\ldots,u_n$. As in the $B$-case
one shows that $X(C_{n+1})$ is flat over $X(C_n)$.

\medskip

The automorphism of $C_{n+1}$ given as the reflection for the root
$\pm u_{n+1}$ fixes $C_n\subset C_{n+1}$ and induces an involution
$I$ of $X(C_{n+1})$ over $X(C_n)$. We have two sections $s_-,s_+$
defined as in the $B$-case.
There are $2n+1$ additional pairs of opposite roots, the pairs
$\pm\alpha_i^+=\pm(u_{n+1}+u_i)$, $\pm\alpha_i^-=\pm(u_{n+1}-u_i)$
for $i\in\{1,\ldots,n\}$ and the pair $\pm 2u_{n+1}$.
Any pair $\pm\alpha_i^+$, $\pm\alpha_i^-$ defines a projection
onto the root subsystem $C_n\subset C_{n+1}$ in the sense of
\cite[1.2]{BB11}, thus we have sections $s_i^+$ and $s_i^-$.
The pair $\pm 2u_{n+1}$ does not define a projection of root systems
$C_{n+1}\to C_n$, so it does not induce a section. However, we can
consider the morphism $X(C_{n+1})\to\P^1_{\{\pm2u_{n+1}\}}$ and the
preimage of the point $(1:1)$. We denote this subscheme of $X(C_{n+1})$
by $S_0$; it is finite flat of degree $2$ over $X(C_n)$ (see below),
such a subscheme we will call a double-section.

\medskip

If we consider $X(C_{n+1})$ and $X(C_n)$ as embedded $X(C_{n+1})
\subseteq P(C_{n+1})$, $X(C_n)\subseteq P(C_n)$, then the morphism
$X(C_{n+1})\to X(C_n)$ is induced by the projection onto the
subproduct $P(C_{n+1})\to P(C_n)$ and $X(C_{n+1})$ is given
in $P(C_{n+1}/C_n)_{X(C_n)}=\big(\prod_{i=1}^n\P^1_{\{\pm\alpha_i^+\}}
\times\prod_{i=1}^n\P^1_{\{\pm\alpha_i^-\}}\times
\P^1_{\{\pm 2u_{n+1}\}}\big)_{X(C_n)}$ by the homogeneous equations
involving the universal $C_n$-data on $X(C_n)$
\begin{equation}\label{eq:CnI}
z_{\alpha_i^-}z_{\alpha_i^+}z_{-2u_{n+1}}
=z_{-\alpha_i^-}z_{-\alpha_i^+}z_{2u_{n+1}},
\quad i\in\{1,\ldots,n\}
\end{equation}
\vspace{-8mm}
\begin{equation}\label{eq:CnII}
t_{\beta}z_{\alpha_2}z_{-\alpha_1}=t_{-\beta}z_{-\alpha_2}z_{\alpha_1},
\quad \alpha_1,\alpha_2\in \{\alpha_1^\pm,\ldots,\alpha_n^\pm\},
\;\alpha_1\neq\alpha_2,\;\beta=\alpha_1-\alpha_2\\
\end{equation}

\pagebreak
\begin{ex} We picture the inclusion of root systems $C_1\subset C_2$
and the map of fans $\Sigma(C_2)\to\Sigma(C_1)$.

\smallskip

\noindent
\begin{picture}(150,60)(0,0)

\put(3,40){\makebox(0,0)[l]{\large$C_2$}}
\put(3,5){\makebox(0,0)[l]{\large$C_1$}}
\put(35,40){\vector(0,1){18}}\put(35,40){\vector(0,-1){18}}
\put(35,40){\vector(1,0){18}}\put(35,40){\vector(-1,0){18}}
\put(35,40){\vector(1,1){9}}\put(35,40){\vector(-1,1){9}}
\put(35,40){\vector(1,-1){9}}\put(35,40){\vector(-1,-1){9}}
\dottedline{1}(12,44)(58,44)\dottedline{1}(12,36)(58,36)
\dottedline{1}(12,44)(12,36)\dottedline{1}(58,44)(58,36)
\put(53,36){\makebox(0,0)[b]{\small$2u_1$}}
\put(17,36){\makebox(0,0)[b]{\small$-2u_1$}}
\put(47,53){\makebox(0,0)[l]{\small$\alpha_1^+\!=\!u_1\!+\!u_2$}}
\put(36,57){\makebox(0,0)[l]{\small$2u_2$}}
\put(23,53){\makebox(0,0)[r]{\small$\alpha_1^-$}}
\put(47,27){\makebox(0,0)[l]{\small$-\alpha_1^-\!=\!u_1\!-\!u_2$}}
\put(34,23){\makebox(0,0)[r]{\small$-2u_2$}}
\put(23,27){\makebox(0,0)[r]{\small$-\alpha_1^+$}}
\put(24,33){\vector(0,1){2}}\put(46,33){\vector(0,1){2}}
\dottedline{1}(24,8)(24,35)\dottedline{1}(46,8)(46,35)
\put(35,5){\vector(1,0){18}}\put(35,5){\vector(-1,0){18}}
\put(53,7){\makebox(0,0)[b]{\small$2u$}}
\put(17,7){\makebox(0,0)[b]{\small$-2u$}}
\drawline(35,4)(35,6)

\put(75,0){
\put(72,40){\makebox(0,0)[r]{\large$\Sigma(C_2)$}}
\put(72,5){\makebox(0,0)[r]{\large$\Sigma(C_1)$}}
\put(35,40){\vector(0,1){18}}\put(35,40){\vector(0,-1){18}}
\put(35,40){\vector(1,0){18}}\put(35,40){\vector(-1,0){18}}
\put(35,40){\vector(1,1){9}}\put(35,40){\vector(-1,1){9}}
\put(35,40){\vector(1,-1){9}}\put(35,40){\vector(-1,-1){9}}
\put(53,39){\makebox(0,0)[t]{\small$v_1$}}
\put(17,39){\makebox(0,0)[t]{\small$-v_1$}}
\put(47,53){\makebox(0,0)[l]{\small$\frac{1}{2}(v_1\!+\!v_2)$}}
\put(36,57){\makebox(0,0)[l]{\small$v_2$}}
\put(23,53){\makebox(0,0)[r]{\small$\frac{1}{2}(-v_1\!+\!v_2)$}}
\put(47,27){\makebox(0,0)[l]{\small$\frac{1}{2}(v_1\!-\!v_2)$}}
\put(34,23){\makebox(0,0)[r]{\small$-v_2$}}
\put(23,27){\makebox(0,0)[r]{\small$\frac{1}{2}(-v_1\!-\!v_2)$}}
\dottedline{1}(28,18)(28,12)\put(28,12){\vector(0,-1){2}}
\dottedline{1}(42,18)(42,12)\put(42,12){\vector(0,-1){2}}
\put(35,5){\vector(1,0){9}}\put(35,5){\vector(-1,0){9}}
\put(48,5){\makebox(0,0)[b]{\small$\frac{1}{2}v$}}
\put(22,5){\makebox(0,0)[b]{\small$-\frac{1}{2}v$}}
\drawline(35,4)(35,6)
}
\end{picture}
\end{ex}

The fibres of $X(C_{n+1})\to X(C_n)$ can be studied for example
using the above description in terms of equations or by employing
the description of $X(C_n)$ as quotient of $X(B_n)$ (see below).
We obtain the following result, in particular the fibres over a
union of torus invariant divisors are not reduced.

\begin{prop}\label{prop:combtypeC}
We define $D\subset X(C_n)$ to be the union of the torus invariant
divisors corresponding to the one-dimensional cones of $\Sigma(C_n)$
generated by elements of the form $\frac{1}{2}(\varepsilon_1v_1+\ldots
+\varepsilon_nv_n)$.
For the structure of the fibres of the morphism $X(C_{n+1})\to X(C_n)$
together with the involution $I$, the sections $s_i^\pm$ and the
double-section $S_0$, there are the following two situations.

Over $X(C_n)\setminus D$ the fibres are $B_n$-curves except that instead
of the section $s_0$ we have a double-section $S_0$ which consists of
the two fixed points under $I$. In this case the central component
contains some of the sections $s_i^\pm$.

Over $D$ the fibres are $B_n$-curves except that the central component
is nonreduced of the form $\P^1_{K[\varepsilon]/\langle\varepsilon^2
\rangle}$ with the double-section $S_0\cong\Spec K[\varepsilon]/
\langle\varepsilon^2\rangle$ concentrated in one point.
The intersection of the central component with the other components
locally is isomorphic to the subscheme in $\A^2_K=\Spec K[x,y]$
defined by the equation $x^2y=0$.
All sections $s_i^\pm$ are on the other components.

In both cases the combinatorial types over the torus invariant divisors,
after the appropriate modifications, are given by the description
in the $B$-case (prop. \ref{prop:combtypeBn}).
\end{prop}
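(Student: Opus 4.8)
The statement is really a computation of all the fibres of $X(C_{n+1})\to X(C_n)$ together with their extra structure ($I$, the $s_i^\pm$, and the double-section $S_0$), stratified according to the torus orbit of $X(C_n)$ over which the fibre sits. The cleanest route is the one hinted at in the text: exploit both the explicit equations \eqref{eq:CnI}--\eqref{eq:CnII} in $P(C_{n+1}/C_n)_{X(C_n)}$ and the presentation of $X(C_n)$ as a quotient of $X(B_n)$ (coming from the map of root lattices $u_i\mapsto u_i$, equivalently the inclusion $M(C_n)\hookrightarrow M(B_n)$ with $2u_i$ vs $u_i$), which on fans is the refinement $\Sigma(B_n)\to\Sigma(C_n)$ that halves the generators $\tfrac12(\varepsilon_1v_1+\dots+\varepsilon_nv_n)$. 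The plan is to reduce to the $B$-case already proven (Proposition \ref{prop:combtypeBn} and the description of the universal $B_n$-curve in Remark \ref{rem:emb-univ-B_n-curve}) and then analyse precisely what changes when $s_0$ is replaced by the double-section $S_0=\{z_{2u_{n+1}}=z_{-2u_{n+1}}\}$, i.e.\ the preimage of $(1:1)$ under $X(C_{n+1})\to\P^1_{\{\pm2u_{n+1}\}}$.

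First I would fix a point $y\in X(C_n)$, lying in the torus orbit $O(\tau)$ for a cone $\tau\in\Sigma(C_n)$, and evaluate the universal $C_n$-data at $y$; by \cite[Rem.\ 1.21]{BB11} the values $(t_\beta:t_{-\beta})$ are $(0:1)$, $(1:0)$ or lie in $\G_m$ according to the sign of $\langle\beta,-\rangle$ on $\tau$, exactly as in the $B$-case. Plugging these into \eqref{eq:CnII} shows that the "$\alpha_i^\pm$-part" of the fibre is combinatorially the same chain of projective lines one gets for the corresponding $B_n$-curve: the $s_i^\pm$ distribute into components $C_k \leftrightarrow P_k$ just as in Proposition \ref{prop:combtypeBn}, with the symmetry $\alpha_i^+\in P_k\Leftrightarrow\alpha_i^-\in P_{-k}$ forced by the involution $I\colon u_{n+1}\mapsto -u_{n+1}$. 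The new ingredient is equation \eqref{eq:CnI}, which couples the central $\P^1_{\{\pm 2u_{n+1}\}}$-coordinate (call it $(w:w')=(z_{2u_{n+1}}:z_{-2u_{n+1}})$) to the $\alpha_i^\pm$-coordinates. On $X(C_n)\setminus D$ one checks that for every $i$ the pair $\alpha_i^+,\alpha_i^-$ lies in $P_k,P_{-k}$ with $k\neq 0$, so the central component $C_0$ carries no marked point; here \eqref{eq:CnI} over the central component reads $z_{\alpha_i^-}z_{\alpha_i^+}z_{-2u_{n+1}}=z_{-\alpha_i^-}z_{-\alpha_i^+}z_{2u_{n+1}}$ with $z_{\alpha_i^\pm},z_{-\alpha_i^\pm}$ constant and nonzero there, so it just says $(w:w')$ is a fixed point of the coordinate involution of $\P^1_{\{\pm2u_{n+1}\}}$ — no, rather: it forces the restriction of the $2u_{n+1}$-coordinate, and unwinding this shows $S_0=\{w=w'\}$ meets $C_0$ in the two fixed points of $I$ on that component, while over the neighbouring components the constraints degenerate to give the $B_n$-chain. (This is the moment to invoke the quotient description: the double cover $X(B_n)\to X(C_n)$ away from $D$ is étale, and the two preimages in the $B$-curve of the section $s_0$ are exactly the two points of $S_0$.)

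The heart of the matter, and the step I expect to be the main obstacle, is the local analysis over $D$, where $X(B_n)\to X(C_n)$ is ramified. I would work on an affine chart: choose the set of simple roots so that $\tau$ is a one-dimensional cone generated by $\tfrac12(\varepsilon_1v_1+\dots+\varepsilon_nv_n)$, with $X(C_n)$ locally $\Spec K[x,\dots]$ where $x = x^{\,2u_j \text{ (suitably signed sum)}}$ is the coordinate vanishing on $D$; the point is that $x$ has a square root $x^{u_{n+1}\text{-type monomial}}$ only after pulling back to $X(B_n)$. One then computes the local ring of $X(C_{n+1})$ from the equations: the fibre over $D$ has central component with coordinate ring $K[\varepsilon]/\langle\varepsilon^2\rangle$-valued because $\varepsilon$ plays the role of a square root of $x|_{\text{fibre}}=0$ that survives as a nilpotent; equation \eqref{eq:CnI} localised at the node between $C_0$ and the adjacent component becomes, after choosing affine coordinates $(x,y)$ on the two branches, the stated $x^2y=0$. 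I would make this explicit by tracking a single pair $\alpha_i^+,\alpha_i^-$ and the coordinate $2u_{n+1}$ through \eqref{eq:CnI}, factoring out the units, and identifying the resulting monomial subscheme of $\A^2$; the double-section $S_0=\{w=w'\}$ then lands in the single point $x=y=0$ with scheme structure $\Spec K[\varepsilon]/\langle\varepsilon^2\rangle$, and all $s_i^\pm$ sit on the reduced outer components, which are again governed by \eqref{eq:CnII} and hence by the $B$-case. Finally, that the combinatorial types over torus invariant divisors agree with Proposition \ref{prop:combtypeBn} after these modifications follows formally, since the distribution of the $s_i^\pm$ into the $P_k$ is dictated by \eqref{eq:CnII}, which is identical to the $B$-equation, and only the $k=0$ part — the $s_0$ versus $S_0$ discrepancy and the possible nonreducedness — is new, exactly as described.
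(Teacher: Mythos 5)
Your route --- evaluating the universal $C_n$-data orbit by orbit via \cite[Rem.\ 1.21]{BB11}, letting \eqref{eq:CnII} reproduce the $B_n$-chain and the distribution of the sections $s_i^\pm$, using \eqref{eq:CnI} to control the $\pm2u_{n+1}$-coordinate, and invoking the double cover $X(B_n)\to X(C_n)$ (\'etale off $D$, ramified along $D$) together with an explicit toric chart computation over $D$ --- is exactly the route the paper indicates (it records no further details), and your local analysis over $D$, producing the multiplicity-two central component, $S_0\cong\Spec K[\varepsilon]/\langle\varepsilon^2\rangle$ concentrated in one point and the local equation $x^2y=0$, is the right way to settle the delicate case.

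There is, however, a concrete error in your treatment of the locus $X(C_n)\setminus D$: you claim that there ``for every $i$ the pair $\alpha_i^+,\alpha_i^-$ lies in $P_k,P_{-k}$ with $k\neq0$, so the central component $C_0$ carries no marked point''. This is the exact opposite of what happens, and it contradicts the statement being proved. A point lies off $D$ precisely when its cone $\tau\in\Sigma(C_n)$ contains no ray spanned by $\frac{1}{2}(\varepsilon_1v_1+\ldots+\varepsilon_nv_n)$; the rays of such a $\tau$ have the form $\varepsilon_1v_{i_1}+\ldots+\varepsilon_kv_{i_k}$ with $k\leq n-1$, so at least one index is unsplit and the corresponding pair $s_i^\pm$ lies in $P_0$, i.e.\ on the central component (over the dense torus all pairs do; already for $n=1$ your claim would force a chain of length three over $\G_m\subset X(C_1)$). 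The ensuing reading of \eqref{eq:CnI} is also wrong: the coordinates $z_{\pm\alpha_i^\pm}$ are never all constant and nonzero on $C_0$. For an unsplit pair both $(z_{\alpha_i^\pm}:z_{-\alpha_i^\pm})$ restrict to coordinates on $C_0$, and \eqref{eq:CnI} exhibits $C_0\to\P^1_{\{\pm2u_{n+1}\}}$ as a degree-two map, whence $S_0\cap C_0$ consists of the two $I$-fixed points of $C_0$ (alternatively, as in your parenthesis, identify the fibre off $D$ with the $B_n$-curve over a preimage in $X(B_n)$ and $S_0$ with $s_0\cup J(s_0)$). For a split pair one factor on each side of \eqref{eq:CnI} vanishes identically on the central component, so the equation is vacuous there; it is precisely when all pairs are split --- i.e.\ over $D$ --- that nothing ties $(z_{2u_{n+1}}:z_{-2u_{n+1}})$ to the remaining coordinates, and the nonreduced component exhibited by your chart computation appears. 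With the two cases interchanged back and the behaviour of \eqref{eq:CnI} corrected in this way, your argument does prove the proposition.
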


\medskip
\noindent
{\bf\boldmath $X(C_n)$ as quotient of $X(B_n)$.}
We investigate the description of $X(C_n)$ as a quotient $X(B_n)/\mu_2$.
On the moduli side this leads to a characterisation of $X(C_n)$ as the
coarse moduli space of a toric Deligne-Mumford stack. For simplicity,
in this part we will work over the field of complex numbers.

\medskip

On the moduli space $\overline{L}_n^{0,\pm}$ of $B_n$-curves we have
an involution $J$ that transforms a $B_n$-curve over a scheme $Y$ to
the $B_n$-curve with the other fixed point section with respect to the 
involution $I$ as section $s_0$, i.e.\ we apply the automorphism of 
the cen-\linebreak tral component that commutes with $I$ (see the 
following remark) to the section $s_0$.

\begin{rem}\label{rem:autom}
Let $(C,I,s_-,s_+)$ be a chain of projective lines with involution of
odd length over $\C$.
Consider the central component $(C_0,p^-_0,p^+_0)$ which we identify
with $(\P^1_\C,0,\infty)$ such that $I|_{C_0}\colon x\mapsto\frac{1}{x}$.
Then there are two automorphisms of $(C_0,p^-_0,p^+_0)$ that commute
with $I$, namely the identity and $x\mapsto-x$, determined by the
action on the fixed points $\{1,-1\}$ of $I|_{C_0}$.
\end{rem}

Identifying $\overline{L}_n^{0,\pm}$ with $X(B_n)$, the involution
$J$ is given on the functor of $B_n$-data (see \cite[1.3]{BB11}) by
$(\mathscr L_{\{\pm u_i\}},\{t_{u_i},t_{-u_i}\})\mapsto
(\mathscr L_{\{\pm u_i\}},\{t_{u_i},-t_{-u_i}\})$ or equivalently
$f_{\pm u_i}\mapsto -f_{\pm u_i}$ on the part corresponding to the
roots $\pm u_1,\ldots,\pm u_n$, whereas the part corresponding to
the other roots remains unchanged.

\medskip

In both the $C_n$-case and the $B_n$-case we start with the same
vector space $E$ with basis $u_1,\ldots,u_n$. The root lattice
$M(C_n)$ is a sublattice of the root lattice $M(B_n)$ of index $2$
and dually $N(B_n)\subset N(C_n)$ of index $2$, whereas the
fan $\Sigma(C_n)$ as a set of cones in $N(C_n)_\Q=N(B_n)_\Q$ is the
same as the fan $\Sigma(B_n)$. Thus, the toric variety $X(C_n)$ is
the quotient of $X(B_n)$ by the involution that maps
$x^{u_i}\mapsto -x^{u_i}$. This involution on $X(B_n)$ coincides
with the involution $J$. Locally, we have quotients
$\A^n/\mu_2$ by the action of $\mu_2$ that changes the sign
of one coordinate of $\A^n$. In particular, $X(B_n)$ is flat over
$X(C_n)$ of degree $2$. $X(C_n)$ can be considered as the
$\mu_2$-Hilbert scheme of $X(B_n)$, then $X(B_n)\to X(C_n)$ forms
the universal family of $\mu_2$-clusters, the fibres over
$X(C_n)\setminus D$ consist of two points, the fibres over $D$
are nonreduced $\mu_2$-clusters.

\medskip

Concerning the double-section $S_0\subset X(C_{n+1})$ we obtain:

\begin{lemma}
The scheme $S_0$ is isomorphic to $X(B_n)$ over $X(C_n)$.
\end{lemma}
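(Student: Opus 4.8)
The plan is to realise $S_0$ as the image of the section $s_0\colon X(B_n)\to X(B_{n+1})$ of Construction \ref{con:univ-B_n-curve} under the quotient morphism $q\colon X(B_{n+1})\to X(C_{n+1})$. Applying the description of $X(C_m)$ as $X(B_m)/\mu_2$ with $m=n+1$, one has $X(C_{n+1})=X(B_{n+1})/\mu_2$ where $\mu_2$ acts by the involution $J\colon x^{u_i}\mapsto -x^{u_i}$ $(i=1,\dots,n+1)$, the morphism $q$ is finite flat of degree $2$, and --- since we work over $\C$ --- formation of $\mu_2$-invariants is exact, so $S_0=q^{-1}(S_0)/\mu_2$ for the closed subscheme $S_0\subseteq X(C_{n+1})$. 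First I would record that the lattice inclusions $M(C_{n+1})\subseteq M(B_{n+1})$ and $M(C_n)\subseteq M(B_n)$ are compatible with the forgetful surjections, so that the square formed by $q$, the quotient $\bar q\colon X(B_n)\to X(C_n)$, and the two morphisms $X(B_{n+1})\to X(B_n)$, $X(C_{n+1})\to X(C_n)$ commutes.

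The key computation is the identification of $q^{-1}(S_0)$. Since $q$ corresponds to $M(C_{n+1})\subseteq M(B_{n+1})$ and $x^{2u_{n+1}}=(x^{u_{n+1}})^2$, the morphism $X(C_{n+1})\to\P^1_{\{\pm 2u_{n+1}\}}$ that cuts out $S_0$ as the preimage of $(1:1)$ pulls back along $q$ to $X(B_{n+1})\to\P^1_{\{\pm u_{n+1}\}}$ followed by the squaring map $(a:b)\mapsto(a^2:b^2)$; hence $q^{-1}(S_0)$ is the vanishing locus of $z_{u_{n+1}}^2-z_{-u_{n+1}}^2=(z_{u_{n+1}}-z_{-u_{n+1}})(z_{u_{n+1}}+z_{-u_{n+1}})$. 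I would then argue that its two factors cut out \emph{disjoint} closed subschemes, namely $s_0=\{z_{u_{n+1}}=z_{-u_{n+1}}\}$ (the section of Construction \ref{con:univ-B_n-curve}, cf.\ Remark \ref{rem:emb-univ-B_n-curve}) and $J(s_0)=\{z_{u_{n+1}}=-z_{-u_{n+1}}\}$, the disjointness following from $2$ being a unit over $\C$. Since $J$ restricts to a free interchange of these two clopen pieces, passing to $\mu_2$-invariants gives $S_0=q^{-1}(S_0)/\mu_2\cong s_0$, the isomorphism being induced by $q|_{s_0}$.

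To finish, $s_0$ is the image of a section of $X(B_{n+1})\to X(B_n)$, hence a closed subscheme isomorphic to $X(B_n)$; chasing the commutative square of the first paragraph then identifies the structure morphism $S_0\to X(C_n)$, under $s_0\cong X(B_n)$, with $\bar q\colon X(B_n)\to X(C_n)$, which yields $S_0\cong X(B_n)$ over $X(C_n)$. The one place where care will be needed is the scheme structure over the branch divisor $D\subseteq X(C_n)$: one must know that $q^{-1}(S_0)$ is a genuine disjoint union there and that $\mu_2$-invariants stay exact, rather than any nilpotents creeping in --- both guaranteed by $2$ being invertible over $\C$. As a consistency check, the fibres of $\bar q$ are pairs of reduced points away from $D$, namely the two $I$-fixed points constituting the double-section $S_0$, and $\Spec\C[\varepsilon]/\langle\varepsilon^2\rangle$ over $D$, in agreement with Proposition \ref{prop:combtypeC}.
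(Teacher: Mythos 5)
Your proof is correct and takes essentially the same route as the paper: there the preimage of $S_0$ in $X(B_{n+1})$ is identified as the $I$-fixed subscheme $\tilde S_0=s_0(X(B_n))\sqcup J(s_0(X(B_n)))$, whose two components $J$ interchanges over $J\colon X(B_n)\to X(B_n)$, and $S_0$ is obtained as the quotient, with $s_0$ inducing the isomorphism $X(B_n)\to S_0$ over $X(C_n)=X(B_n)/\mu_2$. Your explicit computation of $q^{-1}(S_0)$ via the squaring map and the exactness of $\mu_2$-invariants merely spells out these same steps in more detail.
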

\begin{proof}
Let $\tilde{S}_0\subset X(B_{n+1})$ be the fixed point subscheme
of the involution $I$ on $X(B_{n+1})$.
The scheme $\tilde{S}_0$ over $X(B_n)$ consists of two components
$s_0(X(B_n))$ and another copy of $X(B_n)$ such that
$J\colon X(B_{n+1})\to X(B_{n+1})$ restricts to an isomorphism
between these components over $J\colon X(B_n)\to X(B_n)$.
The scheme $S_0$ arises as quotient of $\tilde{S}_0$ by $J$,
the section $s_0\colon X(B_n)\to\tilde{S}_0$ determines an
isomorphism $X(B_n)\to S_0$ over $X(C_n)=X(B_n)/\mu_2$.
\end{proof}

We are led to the following type of curves to be parametrised
by $X(C_n)$.

\begin{defi} (First definition of $C_n$-curves).
A $C_n$-curve over a scheme $Y$ is a collection
$(\pi\colon C\to Y,I,s_-,s_+,$ $s_1^\pm,\ldots,s_n^\pm)$ which
arises from a $B_n$-curve over $Y$ by omitting the section $s_0$.
\end{defi}

Equivalently, we can replace the section $s_0$ of a $B_n$-curve
$C\to Y$ by the subscheme $s_0(Y)\cup J(s_0(Y))$, which coincides
with the fixed point subscheme of the involution $I$ on $C$.
The section $s_0$ selects one of the two components of this fixed
point subscheme. Forgetting this information, the $B_n$-curves for
points $y,Jy$ in the moduli space $\overline{L}_n^{0,\pm}\!=\!X(B_n)$
define $C_n$-curves related by an isomorphism of $C_n$-curves.
If the central component contains sections $s_i^\pm$, then two
nonisomorphic $B_n$-curves over a field give rise to isomorphic
$C_n$-curves.
If the central component does not contain a section $s_i^\pm$, then
one $B_n$-curve corresponds to one $C_n$-curve, but $C_n$-curves
of this type have an extra automorphism that interchanges the two
fixed points of $I$ (cf.\ remark \ref{rem:autom}).

\pagebreak

This functor of $C_n$-curves cannot be representable by a scheme.
However, we can consider the stack of $C_n$-curves.

\begin{thm}
The category of $C_n$-curves forms a Deligne-Mumford stack
$\mathcal X(C_n)$ isomorphic to the quotient stack $[X(B_n)/\mu_2]$
with the group operation given by $J\colon X(B_n)\to X(B_n)$.
\end{thm}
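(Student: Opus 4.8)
The plan is to establish the equivalence $\mathcal X(C_n)\simeq[X(B_n)/\mu_2]$ by exhibiting the quotient stack as a fine moduli stack for the functor of $C_n$-curves, using the fact (already proved in the excerpt) that $X(B_n)=\overline{L}_n^{0,\pm}$ is a fine moduli space for $B_n$-curves with universal family $X(B_{n+1})\to X(B_n)$, together with the explicit description of $J$ on $B_n$-data. First I would describe the universal $C_n$-curve: it is obtained from the universal $B_n$-curve $X(B_{n+1})\to X(B_n)$ by forgetting the section $s_0$ (equivalently, replacing $s_0$ by the fixed-point subscheme $\tilde S_0$ of $I$). Because $J\colon X(B_n)\to X(B_n)$ lifts to an automorphism of $X(B_{n+1})$ that interchanges $s_0(X(B_n))$ with the other component of $\tilde S_0$ and fixes all the remaining data $(I,s_-,s_+,s_1^\pm,\ldots,s_n^\pm)$ — this is exactly the content of the preceding lemma on $S_0$ and of Remark \ref{rem:autom} — the $C_n$-curve over $X(B_n)$ is $\mu_2$-equivariant for the $J$-action, hence descends to a $C_n$-curve over the quotient stack $[X(B_n)/\mu_2]$. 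This gives a morphism $[X(B_n)/\mu_2]\to\mathcal X(C_n)$.

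Next I would construct the inverse. Given a $C_n$-curve $(C\to Y,I,s_-,s_+,s_1^\pm,\ldots,s_n^\pm)$ over a scheme $Y$, the fixed-point subscheme of the involution $I$ on $C$ is finite flat of degree $2$ over $Y$ (one checks this fibrewise using the local picture of chains of projective lines with involution of odd length, where the two fixed points lie on the central component); splitting off a section of this double cover is precisely a choice of $s_0$, so the scheme $P\to Y$ parametrising such splittings is an étale $\mu_2$-torsor over $Y$, and over $P$ the $C_n$-curve canonically acquires the structure of a $B_n$-curve. By the theorem of section \ref{sec:modspace} this $B_n$-curve is classified by a morphism $P\to X(B_n)$, which is $\mu_2$-equivariant for $J$ by the very description of $J$ (changing the chosen fixed point changes the sign of $t_{-u_i}$, i.e. applies $J$ to the $B_n$-data). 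This yields a morphism $Y\to[X(B_n)/\mu_2]$, and the assignment is functorial in $Y$ and compatible with base change, so it defines a morphism $\mathcal X(C_n)\to[X(B_n)/\mu_2]$.

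It then remains to check that the two morphisms are mutually inverse. In one direction one starts from a $C_n$-curve, passes to its double cover $P$ of fixed-point-choices, obtains a map $P\to X(B_n)$, and must recover the original $C_n$-curve as the pullback along $P\to Y$ of the descended universal family — this is immediate from the construction since the $B_n$-curve over $P$ was the original $C_n$-curve with its tautological $s_0$. In the other direction one uses that the $\mu_2$-torsor $P$ is canonically the pullback of the universal torsor $X(B_n)\to[X(B_n)/\mu_2]$, which follows from $X(B_n)$ being a fine moduli space of $B_n$-curves: a $B_n$-curve structure on a given $C_n$-curve over $Y'$ is the same as a lift of the classifying map $Y'\to[X(B_n)/\mu_2]$ to $X(B_n)$. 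I would remark explicitly that the Deligne-Mumford property is automatic because $[X(B_n)/\mu_2]$ is a quotient of a smooth scheme by a finite group (in characteristic $0$, as stipulated), and that the stabilisers are trivial except over $D$, recovering the orbifold description discussed in the text.

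The main obstacle I anticipate is the relative statement that the fixed-point subscheme of $I$ on an arbitrary $C_n$-curve $C\to Y$ is finite flat of degree $2$ over $Y$, uniformly over all fibre types including the nonreduced ones that appear over $D$; concretely one must verify this in the local model $x^2y=0$ inside $\A^2_K$ for the central component, where the involution fixes a nonreduced length-two subscheme $\Spec K[\varepsilon]/\langle\varepsilon^2\rangle$. Once this flatness and the $\mu_2$-torsor structure on $P\to Y$ are in hand, everything else reduces to the already-established representability of the $B_n$-moduli problem and the explicit $J$-equivariance of the $B_n$-data, mirroring the argument of \cite[Thm.\ 3.19]{BB11}.
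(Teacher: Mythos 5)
Your paragraphs 1--3 are essentially the paper's own proof: the two functors you describe (pull back the universal $B_n$-curve along a $\mu_2$-torsor, forget $s_0$ and descend; conversely take the fixed-point locus of $I$ as the torsor, use the tautological $s_0$ to get a $B_n$-structure and invoke the fine moduli property of $X(B_n)$ together with the explicit $J$-equivariance of $B_n$-data) are exactly the morphisms $\Phi$ and $\Psi$ constructed there, and your verification that they are mutually inverse follows the same lines.

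The "main obstacle" you single out at the end, however, rests on a misreading of the moduli problem, and as stated it cannot be carried out. The theorem concerns $C_n$-curves in the sense of the \emph{first} definition: collections arising from $B_n$-curves by omitting $s_0$. Every geometric fibre of such an object is a \emph{reduced} chain of odd length, and since the object admits a $B_n$-structure (étale-locally) the fixed locus of $I$ is étale-locally a disjoint union of two sections, hence automatically a finite étale degree-$2$ cover, i.e.\ a $\mu_2$-torsor over $\C$ --- no verification in a nonreduced local model is needed. The curves with nonreduced central component $\P^1_{K[\varepsilon]/\langle\varepsilon^2\rangle}$ and local picture $x^2y=0$ are fibres of $X(C_{n+1})\to X(C_n)$ over the divisor $D$ (proposition \ref{prop:combtypeC}); they are \emph{not} objects of $\mathcal X(C_n)$, and $X(C_{n+1})\to X(C_n)$ is not the universal family of the stack (the universal object is precisely the descended forget-$s_0$ universal $B_n$-curve of your first paragraph). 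Had you enlarged the category to include those nonreduced fibres, your construction would genuinely fail there: the fixed subscheme $\Spec K[\varepsilon]/\langle\varepsilon^2\rangle$ is not étale, so $P\to Y$ would not be a $\mu_2$-torsor, and such a curve carries no $B_n$-structure even étale-locally. Over $D$ the stackiness enters instead through the extra automorphism of remark \ref{rem:autom} on (reduced) objects whose central component contains none of the $s_i^\pm$. With this correction your argument is complete and coincides with the paper's.
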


\begin{proof}
Let $\mathcal X(C_n)$ be the category of $C_n$-curves, i.e.\
an object of $\mathcal X(C_n)$ over a scheme $Y$ is a $C_n$-curve
$C$ over $Y$, a morphism $(C\to Y)\to(C'\to Y')$ over $Y\to Y'$
is a cartesian diagram compatible with the involution $I$ and
the sections. This is a category fibred in groupoids, we show
that it is equivalent as a fibred category to the Deligne-Mumford
stack $[X(B_n)/\mu_2]$.

An object of $[X(B_n)/\mu_2]$ over a scheme $Y$ is a
$\mu_2$-torsor $T\to Y$ together with a $\mu_2$-equivariant
map $T\to X(B_n)$. A morphism $(T\to Y,\alpha\colon T\to X(B_n))
\to(T'\to Y',\alpha'\colon T'\to X(B_n))$ over $Y\to Y'$ is a
cartesian diagram of $\mu_2$-torsors given by a morphism
$\theta\colon T\to T'$ such that $\alpha'\circ\theta=\alpha$.
We will use that the functor of $X(B_n)$ is isomorphic to the
functor of $B_n$-curves and fix an isomorphism resp.\ a universal
family over $X(B_n)$.

We define a morphism of fibred categories $\Phi\colon[X(B_n)/\mu_2]
\to\mathcal X(C_n)$. For an object $(T\to Y,\alpha\colon T\to X(B_n))$
we have a $B_n$-curve $B\to T$ corresponding to the equivariant morphism
$\alpha$ such that the action of $\mu_2$ on $T$ is given by
interchanging the two possible choices of $s_0$.
After forgetting the section $s_0$, the quotient of $B\to T$ by
$\mu_2$ gives a $C_n$-curve $C\to Y$ using the canonical isomorphism
$T/\mu_2\cong Y$.
For a morphism $(T\to Y,\alpha\colon T\to X(B_n))\to
(T'\to Y,\alpha'\colon T'\to X(B_n))$ we obtain a cartesian
diagram of $C_n$-curves $(C\to Y)\to (C'\to Y')$.

We define a morphism of fibred categories $\Psi\colon\mathcal X(C_n)
\to[X(B_n)/\mu_2]$. Let $C\to Y$ be a $C_n$-curve over $Y$.
Consider the fixed point subscheme $T\subset C$ under $I$, this is
a $\mu_2$-torsor over $Y$. Let $B$ be the pull-back of the $C_n$-curve
$C\to Y$ to $T$, with the section $s_0$ defined as the diagonal of
$T\times_Y T\subset B$ this is a $B_n$-curve and defines a
$\mu_2$-equivariant morphism $\alpha\colon T\to X(B_n)$.
A morphism $(C\to Y)\to(C'\to Y')$ given by $\gamma\colon C\to C'$
determines a cartesian diagram of $B_n$-curves by
$\gamma\times\gamma\colon B=C\times_Y T\to B'=C'\times_{Y'}T'$
over a cartesian diagram of $\mu_2$-torsors given by
$\gamma\colon T\to T'$. So the diagram formed by
$\gamma\colon T\to T'$ and $T,T'\to X(B_n)$ is commutative.

The compositions $\Phi\circ\Psi$ and $\Psi\circ\Phi$ are isomorphic
to the respective identities.
In the case of $\Phi\circ\Psi$ the quotient of the pull-back of a
$C_n$-curve $C\to Y$ to $T\subset C$ is canonically isomorphic to
$C\to Y$.
In the case of $\Psi\circ\Phi$ the quotient of a $B_n$-curve
$B\to T$ over a $\mu_2$-torsor $T$ gives a $C_n$-curve $C\to Y$,
together these form a cartesian square. The section
$s_0\colon T\to B$ determines an inclusion $T\subset C$ as fixed
point subscheme with respect to $I$. Applying the functor $\Psi$
we recover a $B_n$-curve canonically isomorphic to the original
$B_n$-curve.
\end{proof}

\begin{cor}
The toric variety $X(C_n)$ is a coarse moduli space of $C_n$-curves.
\end{cor}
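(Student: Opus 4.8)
The plan is to derive the statement directly from the preceding theorem, which identifies the Deligne--Mumford stack $\mathcal X(C_n)$ of $C_n$-curves with the quotient stack $[X(B_n)/\mu_2]$ for the action of $\mu_2$ via $J$, together with the standard fact that the quotient of a quasi-projective scheme over a field by a finite group admits a coarse moduli space, namely the geometric quotient. In this sense the corollary is essentially formal once the theorem is in hand; the only things to spell out are the general coarse-space statement for finite quotients and the identification of the quotient scheme with $X(C_n)$, the latter of which has already been carried out.

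First I would recall that $X(B_n)$ is a (quasi-)projective toric variety over $\C$ on which $\mu_2$ acts through the finite-order automorphism $J$; hence the geometric quotient $X(B_n)/\mu_2$ exists as a quasi-projective scheme, and for such a situation the structure morphism $[X(B_n)/\mu_2]\to X(B_n)/\mu_2$ --- sending an object $(T\to Y,\ \alpha\colon T\to X(B_n))$ to the induced map $Y\cong T/\mu_2\to X(B_n)/\mu_2$ --- is a coarse moduli space, being initial among morphisms to schemes and a bijection on geometric points. Next I would invoke the identification, already established in the discussion of $X(C_n)$ as a quotient of $X(B_n)$, that $J$ coincides with the involution $x^{u_i}\mapsto -x^{u_i}$, that $\Sigma(C_n)$ and $\Sigma(B_n)$ agree as sets of cones while $N(B_n)\subset N(C_n)$ has index $2$, and that locally the quotient is $\A^n/\mu_2\cong\A^n$; consequently $X(B_n)/\mu_2 = X(C_n)$ as toric varieties. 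Combining the two, the coarse moduli space of $\mathcal X(C_n)\cong[X(B_n)/\mu_2]$ is $X(B_n)/\mu_2 = X(C_n)$.

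The only point that requires care --- and the one I expect to be the main (though routine) obstacle --- is to check that the composite stack isomorphism carries the tautological coarse-space morphism of $[X(B_n)/\mu_2]$ to a morphism $\mathcal X(C_n)\to X(C_n)$ described intrinsically in terms of $C_n$-curves: concretely, that a $C_n$-curve $C\to Y$ with fixed-point $\mu_2$-torsor $T\subset C$ under $I$ yields a classifying map $T\to X(B_n)$ which descends to the expected morphism $Y\to X(C_n)$, and that this map is initial among morphisms to schemes and bijective on geometric points. As a consistency check I would match the two kinds of geometric fibres described after the first definition of $C_n$-curves with the orbit structure of $\mu_2$ on $X(B_n)$: the fibres whose central component carries some $s_i^\pm$ correspond to free orbits, i.e.\ to points of $X(C_n)\setminus D$, while those with an extra $\Z/2$-automorphism interchanging the two $I$-fixed points correspond to the fixed locus, i.e.\ to $D$ --- exactly as one expects for the coarse space.
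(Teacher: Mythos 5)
Your argument is correct and is exactly the one the paper intends: the corollary is stated as an immediate consequence of the theorem $\mathcal X(C_n)\cong[X(B_n)/\mu_2]$, the standard fact that the coarse space of such a finite quotient stack is the geometric quotient, and the identification $X(B_n)/\mu_2=X(C_n)$ already established via the lattice inclusion $N(B_n)\subset N(C_n)$ and the involution $J$. Your additional care about matching fibres with free orbits versus the fixed locus $D$ is a sound consistency check but not needed beyond what the paper already records.
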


\medskip

The stack $\mathcal X(C_n)$ is a toric Deligne-Mumford stack
as introduced in \cite{BCS04} (see also \cite{FMN10}):
we define the stacky fan $\mathbf\Sigma(C_n)$ as the fan $\Sigma(C_n)$
in the lattice $N(C_n)$ with the difference that we choose on the rays
generated by $\frac{1}{2}(\varepsilon_1v_1+\ldots+\varepsilon_nv_n)$
the second lattice points $\varepsilon_1v_1+\ldots+\varepsilon_nv_n$.
In comparision to the fan $\Sigma(B_n)$ the underlying lattice is
finer and the toric DM stack associated with $\bold\Sigma(C_n)$
coincides with the quotient stack $[X(B_n)/\mu_2]$.

\begin{cor}
The stack $\mathcal X(C_n)$ is isomorphic to the toric Deligne-Mumford
stack associated with the stacky fan $\bold\Sigma(C_n)$.
\end{cor}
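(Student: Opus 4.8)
The plan is to combine the preceding theorem with the Cox/stacky-fan description of toric Deligne--Mumford stacks. By that theorem $\mathcal X(C_n)\cong[X(B_n)/\mu_2]$, the group acting through the involution $J$; and the discussion preceding the theorem shows that $J$ is the involution $x^{u_i}\mapsto-x^{u_i}$ of $X(B_n)$, i.e.\ the deck transformation of the quotient map $X(B_n)\to X(B_n)/\mu_2=X(C_n)$. So it suffices to prove that the toric DM stack $\mathcal X(\mathbf\Sigma(C_n))$ attached to the stacky fan $\mathbf\Sigma(C_n)$ in the sense of \cite{BCS04} is isomorphic, as a stack over $X(C_n)$, to $[X(B_n)/\mu_2]$.

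First I would record the relevant lattice data: $N(B_n)\subset N(C_n)$ has index $2$, with $N(C_n)/N(B_n)\cong\Z/2\Z$ generated by the class of $\frac12(v_1+\ldots+v_n)$, so that $\mu_2=\mathrm{Hom}(N(C_n)/N(B_n),\G_m)$ is exactly the kernel of $T_{N(B_n)}\twoheadrightarrow T_{N(C_n)}$. The fan $\Sigma(C_n)$ equals $\Sigma(B_n)$ as a set of cones and is a smooth complete fan for both lattices. For $B\in\mathcal B$ the lattice point $v_B$ is primitive in $N(C_n)$ when $|B|<n$, whereas for $|B|=n$ the primitive generator of the ray $\Q_{\ge0}v_B$ in $N(C_n)$ is $\frac12 v_B=\frac12(\varepsilon_1v_1+\ldots+\varepsilon_nv_n)$; by construction $\mathbf\Sigma(C_n)$ assigns to every ray the point $v_B$, that is, on each ray the generator primitive with respect to the sublattice $N(B_n)$.

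Next I would identify both sides with the same root stack over $X(C_n)$. Since two sets $B,B'\in\mathcal B$ of size $n$ are never comparable, the rays $\Q_{\ge0}v_B$ with $|B|=n$ are pairwise non-adjacent, hence the corresponding torus invariant divisors $D_B\subset X(C_n)$ are pairwise disjoint and $D=\bigsqcup_{|B|=n}D_B$ is a disjoint union of smooth divisors. On an affine chart of $X(C_n)$ of the form $\Spec\Z[x^{u_1-u_2},\ldots,x^{u_{n-1}-u_n},x^{2u_n}]$ the map $X(B_n)\to X(C_n)$ is, $\mu_2$-equivariantly, the squaring map in the coordinate $x^{2u_n}$ cutting out a component of $D$; thus $[X(B_n)/\mu_2]$ is canonically the root stack $\sqrt[2]{D/X(C_n)}$. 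On the other hand, running the BCS construction for $\mathbf\Sigma(C_n)$ with $\beta\colon\Z^{\mathcal B}\to N(C_n)$, $e_B\mapsto v_B$ (so $\mathrm{im}\,\beta=N(B_n)$ and $\mathrm{coker}\,\beta\cong\Z/2\Z$) gives $\mathcal X(\mathbf\Sigma(C_n))=[(\A^{\mathcal B}\setminus Z(\Sigma))/G]$, where $Z(\Sigma)$ is the irrelevant locus of the fan $\Sigma(B_n)=\Sigma(C_n)$ and $G$ sits in an extension $1\to\mathrm{Hom}(\mathrm{Cl}(X(B_n)),\G_m)\to G\to\mu_2\to1$ induced by $\mathrm{coker}\,\beta$; equivalently $\mathcal X(\mathbf\Sigma(C_n))$ is obtained from $X(B_n)=[(\A^{\mathcal B}\setminus Z(\Sigma))/\mathrm{Hom}(\mathrm{Cl}(X(B_n)),\G_m)]$ by the residual $\mu_2$-quotient, and this is again $\sqrt[2]{D/X(C_n)}$ because doubling precisely the $|B|=n$ ray generators is the stacky-fan description of the order-$2$ root stack along the $D_B$ (cf.\ \cite{FMN10}). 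Composing the isomorphisms $\mathcal X(C_n)\cong[X(B_n)/\mu_2]\cong\sqrt[2]{D/X(C_n)}\cong\mathcal X(\mathbf\Sigma(C_n))$ finishes the proof.

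The step needing most care is the matching of the two $\mu_2$-actions on $X(B_n)$: one must verify that the copy of $\mu_2$ coming out of $\mathrm{coker}\,\beta$ in the BCS construction acts as the deck transformation of $X(B_n)\to X(C_n)$, equivalently as $J$, rather than as some other order-$2$ toric automorphism, and likewise that the local square-root model above is genuinely $\mu_2$-equivariant and glues over $X(C_n)$. Once the local model $[\A^1/\mu_2]\times\A^{n-1}\to\A^1\times\A^{n-1}$ (with $\mu_2$ negating the coordinate pulled back from a defining equation of a component of $D$) is established, both identifications with $\sqrt[2]{D/X(C_n)}$ are formal, using that $D$ is a disjoint union of smooth divisors together with the standard dictionary between root stacks of a smooth toric variety along invariant divisors and the corresponding re-scaling of ray generators in the stacky fan.
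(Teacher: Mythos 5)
Your proposal is correct and follows essentially the paper's (very terse) argument: combine the preceding theorem $\mathcal X(C_n)\cong[X(B_n)/\mu_2]$ with the observation that, since $\mathbf\Sigma(C_n)$ has the same cones and the same ray generators as $\Sigma(B_n)$ but the finer lattice $N(C_n)\supset N(B_n)$ of index $2$, the BCS quotient construction exhibits $\mathcal X(\mathbf\Sigma(C_n))$ as the residual quotient $[X(B_n)/\mu_2]$ by the kernel of $T_{N(B_n)}\to T_{N(C_n)}$, which acts by $x^{u_i}\mapsto -x^{u_i}$, i.e.\ by $J$ --- exactly the verification you rightly flag as the key point. The extra passage through the root stack $\sqrt[2]{D/X(C_n)}$ is correct but not needed once that identification of the residual $\mu_2$-action with $J$ is made.
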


\begin{ex}
The stacky fan $\bold\Sigma(C_2)$ in the lattice $\Z\frac{1}{2}v\cong\Z$
consists of the two cones $\Q_{\geq0}v,\Q_{\geq0}(-v)$ with
chosen lattice points $v,-v$. The associated toric DM stack is
$\mathcal X(C_2)\cong[\P^1/\mu_2]$ (cf.\ also \cite[example 7.31]{FMN10}),
it is an orbifold with two stacky points.
\end{ex}

\medskip
\noindent
{\bf\boldmath $X(C_n)$ as fine moduli space.}
We give a characterisation of $X(C_n)$ as a fine moduli space
$\overline{L}_n^\pm$ of $2n$-pointed chains of projective lines.
Here the universal curve is not $X(C_{n+1})\to X(C_n)$, however,
the universal curve and the general notion of a $C_n$-curve are
defined naturally in terms of the inclusion of root systems
$C_n\to C_{n+1}$.

\medskip

We have the root subsystem $C_n\subset C_{n+1}$ in the subspace
generated by the roots $u_1,\ldots,u_n$.
Take those pairs of opposite roots in $C_{n+1}\setminus C_n$ which define
projections $C_{n+1}\to C_n$ in the sense of \cite[1.2]{BB11}; these
are $\pm\alpha_1^-,\pm\alpha_1^+,\ldots,\pm\alpha_n^-,\pm\alpha_n^+$
but not $\pm2u_{n+1}$.
To each of these pairs $\pm\alpha_i^-$ and $\pm\alpha_i^+$ we associate
a section $s_i^-$ and $s_i^+$.
The element of the Weyl group given as the reflection for the root
$\pm2u_{n+1}$ mapping $u_{n+1}\mapsto-u_{n+1}$ and $u_i\mapsto u_i$
for $i\in\{1,\ldots,n\}$ is an isomorphism of $C_{n+1}$ fixing
$C_n\subset C_{n+1}$. It maps $\alpha_i^-\leftrightarrow-\alpha_i^+$.
This leads us to the following definition.

\begin{defi} (Second definition of $C_n$-curves).
A $C_n$-curve over an algebraically closed field $K$ is a chain of
projective lines with involution of odd or even length with $2n$
(possibly coinciding) marked points $s_1^\pm,\ldots,s_n^\pm$ different
from the poles, the involution interchanging $s_i^-\leftrightarrow s_i^+$,
such that every component contains at least one of the points $s_i^\pm$.
We define a $C_n$-curve over an arbitrary scheme, isomorphisms of
$C_n$-curves and the moduli functor of $C_n$-curves in the same way
as we did in the case of $B_n$-curves.
\end{defi}

\begin{con}
Let the subscheme
\[\textstyle
C(C_{n+1}/C_n)\subset
\big(\prod_{i=1}^n\P^1_{\{\pm\alpha_i^-\}}\times
\prod_{i=1}^n\P^1_{\{\pm\alpha_i^+\}}\big)_{X(C_n)}
\]
be defined by the equations (\ref{eq:CnII}) using the universal
$C_n$-data on $X(C_n)$. This morphism $C(C_{n+1}/C_n)\to X(C_n)$
has the sections $s_-,s_+,s_i^\pm$, where $s_-$ is defined by
$z_{-\alpha_i^\pm}=0$ ($i=1,\ldots,n$), $s_+$ is defined by
$z_{\alpha_i^\pm}=0$ ($i=1,\ldots,n$) and the sections $s_i^\pm$
by the equations $z_{\alpha_i^\pm}=z_{-\alpha_i^\pm}$.
The involution maps $\P^1_{\{\pm\alpha_i^-\}}\leftrightarrow
\P^1_{\{\pm\alpha_i^+\}}$, $(z_{\alpha_i^-}:z_{-\alpha_i^-})
\leftrightarrow(z_{-\alpha_i^+}:z_{\alpha_i^+})$.
\end{con}

\begin{rem}
The toric variety $C(C_{n+1}/C_n)$ arises from $X(C_{n+1})$ by
contracting certain torus invariant prime divisors. The fibres of
$X(C_{n+1})\to X(C_n)$ over the divisors corresponding 
to the rays generated by elements of the form
$\frac{1}{2}(\varepsilon_1v_1+\ldots+\varepsilon_nv_n)$ (forming $D$
in proposition \ref{prop:combtypeC}) have a central component
containing none of the sections $s_i^\pm$. In $X(C_{n+1})$ the
support of the central components of the fibers over the divisor
corresponding to
$\frac{1}{2}(\varepsilon_1v_1+\ldots+\varepsilon_nv_n)$ forms a
torus invariant divisor which corresponds to the ray in
$\Sigma(C_{n+1})$ generated by $\varepsilon_1v_1+\ldots
+\varepsilon_nv_n$ and is isomorphic to $X(C_1)\times X(A_{n-1})
\cong\P^1\times X(A_{n-1})$. We contract these divisors\linebreak
$\P^1\times X(A_{n-1})$ to $X(A_{n-1})$ by omitting the rays in
$\Sigma(C_{n+1})$ generated by \linebreak elements of the form
$\varepsilon_1v_1+\ldots+\varepsilon_nv_n$, but retaining the
two-dimensional cones \linebreak 
$\left<
\frac{1}{2}(\varepsilon_1v_1+\ldots+\varepsilon_nv_n-v_{n+1}),
\frac{1}{2}(\varepsilon_1v_1+\ldots+\varepsilon_nv_n+v_{n+1})
\right>_{\Q_{\geq0}}$. 
On the fibers over $D$ the central components are contracted.
\end{rem}

\begin{prop}
The morphism $C(C_{n+1}/C_n)\to X(C_n)$ with the involution $I$ and
the sections $s_-,s_+,s_1^\pm,\ldots,s_n^\pm$ is a $C_n$-curve. The
combinatorial types of the fibres over the torus orbits corresponding
to one-dimensional cones are as follows:
\[
\begin{array}{lc}
\varepsilon_{i_1}v_{i_1}&s_{i_1}^{\varepsilon_1}|s_{i_2}^\pm
\cdots s_{i_n}^\pm|s_{i_1}^{-\varepsilon_1}\\

\varepsilon_{i_1}v_{i_1}+\varepsilon_{i_2}v_{i_2}&\;\;
s_{i_1}^{\varepsilon_1}s_{i_2}^{\varepsilon_2}|s_{i_3}^\pm\cdots
s_{i_n}^\pm|s_{i_2}^{-\varepsilon_2}s_{i_1}^{-\varepsilon_1}\\
\vdots&\vdots\\

\varepsilon_{i_1}v_{i_1}+\;\ldots\;+\varepsilon_{i_{n-2}}v_{i_{n-2}}&\;\;\;
s_{i_1}^{\varepsilon_1}\cdots s_{i_{n-2}}^{\varepsilon_{n-2}}|
s_{i_{n-1}}^\pm s_{i_n}^\pm|s_{i_{n-2}}^{-\varepsilon_{n-2}}\cdots
s_{i_1}^{-\varepsilon_1}\\

\varepsilon_{i_1}v_{i_1}+\;\ldots\ldots\;+\varepsilon_{i_{n-1}}v_{i_{n-1}}
\hspace{-1cm}&\;\;\;\;
s_{i_1}^{\varepsilon_1}\cdots s_{i_{n-1}}^{\varepsilon_{n-1}}|
s_{i_{n}}^\pm|s_{i_{n-1}}^{-\varepsilon_{n-1}}
\cdots s_{i_1}^{-\varepsilon_1}\\

\frac{1}{2}(\varepsilon_{i_1}v_{i_1}+\;\ldots\ldots\ldots\;
+\varepsilon_{i_{n}}v_{i_{n}})&\;\;\;\;
s_{i_1}^{\varepsilon_1}\cdots s_{i_{n}}^{\varepsilon_{n}}|
s_{i_{n}}^{-\varepsilon_{n}}\cdots s_{i_1}^{-\varepsilon_1}\\
\end{array}
\]
\end{prop}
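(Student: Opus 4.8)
The plan is to verify the axioms of a $C_n$-curve for $C(C_{n+1}/C_n)\to X(C_n)$ directly, and then to read off the fibres over the ray orbits from the defining equations (\ref{eq:CnII}), in the same spirit as propositions \ref{prop:emb-B_n-curve} and \ref{prop:combtypeBn} for the $B_n$-case. Properness and the toric description come from the preceding remark, where $C(C_{n+1}/C_n)$ is realised as the complete toric variety obtained from $X(C_{n+1})$ by the indicated contraction, with the structure morphism toric; flatness is checked on the affine toric charts exactly as for $X(C_{n+1})\to X(C_n)$, or deduced afterwards once the fibre computation (in its full form, over all torus orbits) shows every fibre to be a chain of projective lines, by miracle flatness, $C(C_{n+1}/C_n)$ being a normal --- hence Cohen--Macaulay --- toric variety over the smooth base $X(C_n)$. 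The involution of $C_{n+1}$ used in the construction exchanges $\alpha_i^+\leftrightarrow-\alpha_i^-$ while fixing $C_n$ pointwise, so on homogeneous coordinates $I$ acts by $z_{\pm\alpha_i^+}\leftrightarrow z_{\mp\alpha_i^-}$; since $I$ is trivial on the base it permutes the equations (\ref{eq:CnII}), hence descends to an involution of $C(C_{n+1}/C_n)$ over $X(C_n)$ with $I^2=\id$, $I(s_-)=s_+$, $I(s_i^-)=s_i^+$. The loci $z_{-\alpha_i^\pm}=0$, $z_{\alpha_i^\pm}=0$ and $z_{\alpha_i^\pm}=z_{-\alpha_i^\pm}$ satisfy (\ref{eq:CnII}), and, using that each $(t_\beta:t_{-\beta})$ is a well-defined section of a $\P^1$-bundle on $X(C_n)$, they cut out sections $s_-,s_+,s_i^\pm$, the latter disjoint from the poles.

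By torus equivariance the fibre over a point of a torus orbit depends only on the orbit, so it remains to identify the fibres over the ray orbits. Over the open torus all $t_\beta,t_{-\beta}$ are invertible, so (\ref{eq:CnII}) expresses every $(z_{\alpha_j^\pm}:z_{-\alpha_j^\pm})$ in terms of $(z_{\alpha_1^+}:z_{-\alpha_1^+})$ and the fibre is a single projective line carrying $s_-,s_+$ and all $s_i^\pm$. Over the orbit of a ray generated by $v$ one uses the specialisation rule of \cite[Rem.\ 1.21]{BB11}: $(t_\beta:t_{-\beta})$ becomes $(0:1)$, $(1:0)$, or a free parameter according to the sign of $\langle\beta,v\rangle$. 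For $v=\varepsilon_{i_1}v_{i_1}+\ldots+\varepsilon_{i_k}v_{i_k}$ with $1\le k\le n-1$ the analysis runs parallel to the $B_n$-case: the roots $2u_m$ and $u_m\pm u_{m'}$ with $m\in\{i_1,\ldots,i_k\}$ pair nontrivially with $v$, which pushes the corresponding projections to pole values on the two outer components, while for $m\notin\{i_1,\ldots,i_k\}$ all relevant $\beta$ are orthogonal to $v$, so $s_{i_{k+1}}^\pm,\ldots,s_{i_n}^\pm$ all lie on one central component on which the equations stay non-degenerate; as $k\le n-1$ that component is non-empty, and one obtains the reduced chain of length three of combinatorial type $s_{i_1}^{\varepsilon_1}\cdots s_{i_k}^{\varepsilon_k}\,|\,s_{i_{k+1}}^\pm\cdots s_{i_n}^\pm\,|\,s_{i_k}^{-\varepsilon_k}\cdots s_{i_1}^{-\varepsilon_1}$, i.e.\ the $B_n$-type of proposition \ref{prop:combtypeBn} with $s_0$ deleted.

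The remaining case $v=\tfrac12(\varepsilon_1v_1+\ldots+\varepsilon_nv_n)$ is the crux: one must see that the contraction has removed the nonreduced behaviour of proposition \ref{prop:combtypeC}. Taking $\varepsilon_i\equiv+1$ for notational ease, $\langle\beta,v\rangle=0$ holds exactly for $\beta=u_i-u_j$, while $2u_i$ and $u_i+u_j$ pair to $+1$ with $v$; hence the equations (\ref{eq:CnII}) for $\beta=2u_i$ and for $\beta=u_i+u_j$ collapse to $z_{-\alpha_j^-}z_{\alpha_i^+}=0$ for all $i,j$, whereas those for $\beta=u_i-u_j$ remain non-degenerate. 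Thus a point of the fibre either has all $z_{-\alpha_i^-}=0$, giving a reduced $\P^1$ coordinatised by $\P^1_{\{\pm\alpha_1^+\}}$ and carrying $s_-$ and all $s_i^+$, or has all $z_{\alpha_i^+}=0$, giving a reduced $\P^1$ coordinatised by $\P^1_{\{\pm\alpha_1^-\}}$ and carrying $s_+$ and all $s_i^-$; these two lines meet transversally in the single point with all $\P^1_{\{\pm\alpha_i^+\}}$-coordinates $(0:1)$ and all $\P^1_{\{\pm\alpha_i^-\}}$-coordinates $(1:0)$. This is the reduced chain of length two $s_{i_1}^{\varepsilon_1}\cdots s_{i_n}^{\varepsilon_n}\,|\,s_{i_n}^{-\varepsilon_n}\cdots s_{i_1}^{-\varepsilon_1}$ with $I$ exchanging the two components, as in the last row of the table; general signs $\varepsilon_i$ merely interchange $s_i^+\leftrightarrow s_i^-$ on the outer components. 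No nilpotents occur, precisely because equation (\ref{eq:CnI}) --- which is what thickens the central component of $X(C_{n+1})$ over $D$ --- is absent from the defining equations of $C(C_{n+1}/C_n)$; checking this, i.e.\ that discarding (\ref{eq:CnI}) leaves the fibre over $D$ a reduced chain of exactly the stated length, is the main obstacle. Finally, in every case each component carries some $s_i^\pm$, so the chains are stable, and the whole collection is a $C_n$-curve.
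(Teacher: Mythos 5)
Your argument is correct and follows the route the paper intends for this proposition (which it states without an explicit proof): embed the family via the equations (\ref{eq:CnII}), read off the fibres over the ray orbits by the specialisation rule of \cite[Rem.\ 1.21]{BB11} exactly as in proposition \ref{prop:combtypeBn}, and observe that the nonreduced behaviour of proposition \ref{prop:combtypeC} came solely from equation (\ref{eq:CnI}), which is absent here --- your explicit check over $\frac{1}{2}(\varepsilon_1v_1+\ldots+\varepsilon_nv_n)$ that the collapsed relations $z_{-\alpha_j^-}z_{\alpha_i^+}=0$ together with the nondegenerate equations among the $+$'s and among the $-$'s cut out a reduced two-component chain exchanged by $I$ is precisely the key computation. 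Two minor points: Cohen--Macaulayness should be attributed to $C(C_{n+1}/C_n)$ being toric rather than to normality alone, and for the assertion that every geometric fibre is a $C_n$-curve the same specialisation analysis has to be run over all torus orbits, not only the rays --- you flag this but do not carry it out; it goes through verbatim, in the spirit of lemma \ref{le:B_n-data--B_n-curve}.
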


\begin{defi}
We call $C(C_{n+1}/C_n)\to X(C_n)$ together with the involution $I$
and the sections $s_-,s_+,s_1^-,s_1^+,\ldots,s_n^-,s_n^+$ the universal
$C_n$-curve over $X(C_n)$.
\end{defi}

By the same procedure as in the case of root systems of type $A$
and $B$ we can prove the following.

\begin{thm}
There exists a fine moduli space $\overline{L}_n^\pm$ of $C_n$-curves
isomorphic to the toric variety $X(C_n)$ with universal family
$C(C_{n+1}/C_n)\to X(C_n)$.
\end{thm}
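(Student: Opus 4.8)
The plan is to follow the same pattern as in the proof of the corresponding statement for $B_n$-curves in Section~\ref{sec:modspace}, which in turn is modelled on \cite[Thm.~3.19]{BB11}: we construct an isomorphism between the moduli functor $\underline{\overline{L}_n^\pm}$ of $C_n$-curves and the functor $F_{C_n}$ of the toric variety $X(C_n)$ described in \cite[1.3]{BB11} in terms of $C_n$-data. Since $F_{C_n}$ is represented by $X(C_n)$, this yields the fine moduli space $\overline{L}_n^\pm=X(C_n)$; the universal family is then the object corresponding to $\id_{X(C_n)}\in F_{C_n}(X(C_n))$, which by the way the isomorphism is built is precisely the universal $C_n$-curve $C(C_{n+1}/C_n)\to X(C_n)$ constructed above. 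Throughout we associate with the sections $s_i^+,s_i^-$ the roots $\alpha_i^+=u_{n+1}+u_i$, $\alpha_i^-=u_{n+1}-u_i$ of $C_{n+1}^+\setminus C_n^+$, as in equations (\ref{eq:CnII}).

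In one direction, given $C_n$-data on a scheme $Y$, we build a family over $Y$ by the homogeneous equations (\ref{eq:CnII}) inside $\big(\prod_{i=1}^n\P^1_{\{\pm\alpha_i^-\}}\times\prod_{i=1}^n\P^1_{\{\pm\alpha_i^+\}}\big)_Y$, with the given $C_n$-data in place of the universal one; since every $C_n$-datum is a pull-back of the universal $C_n$-data, this family is the pull-back of $C(C_{n+1}/C_n)\to X(C_n)$ along the classifying morphism $Y\to X(C_n)$ and hence a $C_n$-curve, as $C(C_{n+1}/C_n)\to X(C_n)$ was shown above to be one. In the other direction, from a $C_n$-curve $C\to Y$ we extract $C_n$-data: for each pair of distinct sections $s_1,s_2\in\{s_1^\pm,\ldots,s_n^\pm\}$ the difference $\beta=\alpha_{s_1}-\alpha_{s_2}$ is a root of $C_n$ — note that here, unlike in the $B_n$-case, also $\alpha_i^+-\alpha_i^-=2u_i$ is a root of $C_n$, so that every such pair contributes a datum — and the contraction $C\to C_{\{s_1,s_2\}}$ onto an $A_1$-curve (cf.\ \cite[Constr.\ 3.15, 3.16]{BB11}) yields $A_1$-data which we take to be $(\mathscr L_{\{\pm\beta\}},\{t_\beta,t_{-\beta}\})$; the relations $s_i^+/s_j^+=s_j^-/s_i^-$, $s_i^+/s_j^-=s_j^+/s_i^-$ forced by the involution $I$ make this well defined.

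It then remains to check that $(\mathscr L_{\{\pm\beta\}},\{t_\beta,t_{-\beta}\})_{\{\pm\beta\}\subseteq C_n}$ is $C_n$-data, i.e.\ that $t_\beta t_\gamma t_{-\delta}=t_{-\beta}t_{-\gamma}t_\delta$ holds for every linear relation $\beta+\gamma=\delta$ between positive roots of $C_n$; the list of these relations, the analogue of Lemma~\ref{le:linrel-B_n}, is $\beta_{ij}+\beta_{jk}=\beta_{ik}$, $\beta_{ij}+\gamma_{jk}=\gamma_{ik}$, $\beta_{ij}+\gamma_{ij}=2u_i$ and $\beta_{ik}+2u_k=\gamma_{ik}$ (the last one, involving the long root $2u_k=\alpha_k^+-\alpha_k^-$, has no counterpart in type $B$). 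In each case one can write $\beta=\alpha_{s_1}-\alpha_{s_2}$, $\gamma=\alpha_{s_2}-\alpha_{s_3}$, $\delta=\alpha_{s_1}-\alpha_{s_3}$ for three distinct sections $s_1,s_2,s_3\in\{s_1^\pm,\ldots,s_n^\pm\}$, and the contraction $C\to C_{\{s_1,s_2,s_3\}}$ onto an $A_2$-curve together with \cite[Lemma 3.18]{BB11} then identifies the triple of extracted $A_1$-data with the data of that $A_2$-curve and so shows it is $A_2$-data. Both constructions commute with base change, so they give morphisms of functors $F_{C_n}\to\underline{\overline{L}_n^\pm}$ and $\underline{\overline{L}_n^\pm}\to F_{C_n}$, and exactly as in the proof of \cite[Thm.~3.19]{BB11} one verifies that they are mutually inverse. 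The step I expect to demand the most care is not a computation but checking that the contraction-morphism constructions of \cite[3.3]{BB11} and the well-definedness of the extracted data go through unchanged even though $C_n$-curves may now have even length and carry no distinguished section $s_0$; they do, since those constructions use only stability (every component meets some $s_i^\pm$) and apply to arbitrary pointed chains of projective lines.
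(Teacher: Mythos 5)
Your proposal is correct and follows essentially the same route as the paper: the paper proves the theorem "by the same procedure as in the case of root systems of type $A$ and $B$", i.e.\ by constructing mutually inverse morphisms between the moduli functor of $C_n$-curves and the functor $F_{C_n}$ of \cite[1.3]{BB11} via the equations (\ref{eq:CnII}) in one direction and contractions to $A_1$- and $A_2$-curves in the other, exactly as you do, with your list of linear relations among positive roots of $C_n$ (including the extra relations involving the long roots $2u_i=\alpha_i^+-\alpha_i^-$) supplying the needed $A_2$-checks. The paper only adds the remark that one could alternatively argue via the closed embedding $X(C_n)\to X(A_{2n-1})$, which you do not need.
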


\begin{rem}
There is a natural closed embedding of the moduli spaces
$\overline{L}_n^\pm\!=\!X(C_n)\,\to\,\overline{L}_{2n}\!=\!X(A_{2n-1})$
determined by considering a $C_n$-curve with sections
$s_1^-,\ldots,s_n^-,s_n^+,\ldots,s_1^+$ as an $A_{2n-1}$-curve with
sections $s_1,\ldots,s_{2n}$. The toric morphism $X(C_n)\to X(A_{2n-1})$
is given by the projection of root systems $A_{2n-1}\to C_n$ induced
by $\bigoplus_{i=1}^{2n}\Z u_i\to M(C_n)$, $u_i\mapsto u_i$,
$u_{2n+1-i}\mapsto-u_i$ for $i=1,\ldots,n$. The kernel in $M(A_{2n-1})$
is generated by $u_{2n+1-i}+u_i-u_{2n+1-j}-u_j$ for some
fixed $j$ and $i\in\{1,\ldots,n\}\setminus\{j\}$.
By employing this embedding we have an alternative approach to prove
the above statements.
\end{rem}

\bigskip
\section{Root systems of type $D$}
\label{sec:D_n}

Consider for $n\geq 2$ an $n$-dimensional Euclidean space $E$ with
basis $u_1,\ldots,u_n$. The root system $D_n$ in $E$ consists of the
$2n(n-1)$ roots
\[
\pm(u_i+u_j), \pm(u_i-u_j)\;\:\textit{for}\;\:i,j\in\{1,\ldots,n\},i<j.
\]
The following is a set of simple roots:
\[u_1-u_2,u_2-u_3,\ldots,u_{n-1}-u_n,u_{n-1}+u_n.\]
The Weyl group $(\Z/2\Z)^{n-1}\rtimes S_n$ acts by $u_i\mapsto
\varepsilon_i u_i$, where the $\varepsilon_i$ are signs such that
$\prod_i\varepsilon_i=1$, and by permuting the $u_i$.
So there are $2^{n-1}n!$ sets of simple roots, these are of the form
$\varepsilon_1u_{i_1}-\varepsilon_2u_{i_2},
\varepsilon_2u_{i_2}-\varepsilon_3u_{i_3},\ldots,
\varepsilon_{n-1}u_{i_{n-1}}-\varepsilon_{n}u_{i_{n}},
\varepsilon_{n-1}u_{i_{n-1}}+\varepsilon_{n}u_{i_{n}}$
for orderings $i_1,\ldots,i_n$ of the set
$\{1,\ldots,n\}$ and signs $\varepsilon_1,\ldots,\varepsilon_n$
(note that $\varepsilon_n=1$ and $\varepsilon_n=-1$ give the same set).

\medskip

The vector space $E^*$ dual to $E$ with basis $v_1,\ldots,v_n$
dual to $u_1,\ldots,u_n$ contains the lattice $N(D_n)$ dual to the
root lattice $M(D_n)$. To describe the fan $\Sigma(D_n)$ in the
lattice $N(D_n)$ we determine a Weyl chamber. The set of simple
roots $u_1-u_2,u_2-u_3,\ldots,u_{n-1}-u_n,u_{n-1}+u_n$ has the
dual basis $v_1,v_1+v_2,\ldots,v_1+\ldots+v_{n-2},\frac{1}{2}(v_1+
\ldots+v_{n-1}-v_n),\frac{1}{2}(v_1+\ldots+v_{n-1}+v_n)$ of $N(D_n)$
which generates the corresponding Weyl chamber.
There are $3^n-n2^{n-1}-1$ one-dimensional cones generated by
elements of the form $\sum_{i\in A}\varepsilon_iv_i$ for
$A\subset\{1,\ldots,n\}$, $1\leq|A|\leq n-2$ or of the form
$\frac{1}{2}(\varepsilon_1v_1+\ldots+\varepsilon_{n-1}v_{n-1}+
\varepsilon_nv_n)$, where the $\varepsilon_i$ are signs.

\medskip

The torus invariant divisor for the one-dimensional cone generated by
$\varepsilon_1v_{i_1}+\ldots+\varepsilon_kv_{i_k}$, $1\leq k\leq n-2$
is isomorphic to $X(D_{n-k})\times X(A_{k-1})$, that for
$\varepsilon_1v_1+\ldots+\varepsilon_{n-2}v_{n-2}$ is isomorphic to
$X(A_1)\times X(A_1)\times X(A_{n-3})\cong X(D_2)\times X(A_{n-3})$
and that for $\frac{1}{2}(\varepsilon_1v_1+\ldots+\varepsilon_nv_n)$
is isomorphic to $X(A_{n-1})$ (see \cite[1.2]{BB11}).

\bigskip
\noindent
{\bf\boldmath $X(D_{n+1})$ over $X(D_n)$.}
Consider the proper surjective morphism $X(D_{n+1})\to X(D_n)$ induced
by the root subsystem $D_n\subset D_{n+1}$ consisting of the roots
in the subspace generated by $u_1,\ldots,u_n$.
We have a projection of fans $\Sigma(D_{n+1})\to \Sigma(D_n)$ along
the subspace generated by $v_{n+1}$. The generic fibre is $\P^1$.
Note that the torus invariant divisor in $X(D_{n+1})$ corresponding to
$v_1+\ldots+v_{n-1}$ is lying over the closure of the torus orbit in
$X(D_n)$ of codimension $2$ corresponding to the 2-dimensional cone
generated by $\frac{1}{2}(v_1+\ldots+v_{n-1}+v_n),
\frac{1}{2}(v_1+\ldots+v_{n-1}-v_n)$; here (and on the translates under
the Weyl group $W(B_n)$) we have fibres of dimension $2$. This implies
that the morphism $X(D_{n+1})\to X(D_n)$ is not flat.

\medskip

There are $2n$ additional pairs of opposite roots, the pairs
$\pm\alpha_i^+=\pm(u_{n+1}+u_i)$ and $\pm\alpha_i^-=\pm(u_{n+1}-u_i)$
for $i\in\{1,\ldots,n\}$.
The projections along the subspaces generated by these do not
define projections of root systems $D_{n+1}\to D_n$ in the sense
of \cite[1.2]{BB11}: we have $\alpha_i^+-\alpha_i^-=2u_i$, so the
projection along the subspace generated by $\alpha_i^+$
(resp.\ $\alpha_i^-$) maps $\alpha_i^-$ (resp.\ $\alpha_i^+$)
to $2u_i$ which is not a multiple of a root of $D_n$.
Instead we can consider the preimages of
$(1:1)\in\P^1_{\{\pm\alpha_i^-\}},\P^1_{\{\pm\alpha_i^+\}}$
with respect to the projections $X(D_{n+1})\to\P^1_{\{\pm\alpha_i^-\}},
\P^1_{\{\pm\alpha_i^+\}}$ determined by the inclusions of root systems
$\{\pm\alpha_i^-\},\{\pm\alpha_i^+\}\subset D_{n+1}$, we denote these
subschemes by $s_i^-,s_i^+$. As in the $B$ and $C$-case we have
sections $s_-,s_+$; further we have an involution $I$ coming from the
automorphism of $D_{n+1}$ fixing $D_n\subset D_{n+1}$ which maps
$u_{n+1}\mapsto -u_{n+1}$, $u_i\mapsto u_i$ for $i\in\{1,\ldots,n\}$
and is not an element of the Weyl group $W(D_{n+1})$.

\medskip

As in the other cases we can study $X(D_{n+1})$ over $X(D_n)$
via the embedding into
$P(D_{n+1}/D_n)_{X(D_n)}=\big(\prod_{i=1}^n\P^1_{\{\pm\alpha_i^-\}}
\times\prod_{i=1}^n\P^1_{\{\pm\alpha_i^+\}}\big)_{X(D_n)}$.
The subscheme $X(D_{n+1})$ $\subset P(D_{n+1}/D_n)_{X(D_n)}$ is given
by the homogeneous equations parametrised by the universal $D_n$-data
\[
t_{\beta}z_{\alpha_2}z_{-\alpha_1}=t_{-\beta}z_{-\alpha_2}z_{\alpha_1}
\qquad
\begin{array}{l}
\textit{for}\;\alpha_1,\alpha_2\in \{\alpha_1^\pm,\ldots,\alpha_n^\pm\}\\
\textit{such that}\;\beta=\alpha_1-\alpha_2\;\textit{is a root of $D_n$}\\
\end{array}
\]

\pagebreak
We will see that over the complement of a closed subset of
codimension $2$ the fibres are chains of projective lines
with sections $s_i^\pm$. Over these points we have a combinatorial
type for a fibre resp.\ for the universal $D_n$-data as in the
$B$-case (see proposition \ref{prop:combtypeBn}), we use the
notation introduced there.

\begin{ex} $X(D_3)$ over $X(D_2)$.\\
The root system $D_2$ consists of the $4$ roots $\pm u_1\pm u_2$.
It is contained in the root system $D_3$, this has the $8$ additional
roots $\pm\alpha_1^-=\pm(u_3-u_1)$, $\pm\alpha_1^+=\pm(u_3+u_1)$,
$\pm\alpha_2^-=\pm(u_3-u_2)$, $\pm\alpha_2^+=\pm(u_3+u_2)$.
Because of the isomorphism of root systems $D_2\cong A_1\times A_1$
we have $X(D_2)\cong\P^1\times\P^1$. The fan $\Sigma(D_2)$ has $4$
one-dimensional cones generated by $\frac{1}{2}(\pm v_1\pm v_2)$.
The fan $\Sigma(D_3)$ has $14$ one-dimensional cones, $6$ of the
form $\pm v_i$ and $8$ of the form $\frac{1}{2}(\varepsilon_1v_1+
\varepsilon_2v_2+\varepsilon_3v_3)$.
The projection $\Sigma(D_3)\to\Sigma(D_2)$ maps the generator of the
one-dimensional cone
$\frac{1}{2}(\varepsilon_1v_1+\varepsilon_2v_2+\varepsilon_3v_3)$
to the generator of the one-dimensional cone
$\frac{1}{2}(\varepsilon_1v_1+\varepsilon_2v_2)$,
the vector $\pm v_i$ for $i=1,2$ is not mapped to a one-dimensional
cone of $D_2$ but into the interior of the $2$-dimensional cone
$\langle\pm v_i+v_j,\pm v_i-v_j\rangle_{\Q_{\geq 0}}$.\\
In $P(D_3/D_2)_{X(D_2)}=\big(\P^1_{\{\pm\alpha_1^-\}}\times
\P^1_{\{\pm\alpha_1^+\}}\times\P^1_{\{\pm\alpha_2^-\}}\times
\P^1_{\{\pm\alpha_2^+\}}\big)_{X(D_2)}$ the subscheme $X(D_3)$
is given by $4$ equations parametrised by the universal $D_2$-data
on $X(D_2)$. For each point we have $D_2$-data of the form
$(t_{\beta_{12}}:t_{-\beta_{12}})$, $(t_{\gamma_{12}}:t_{-\gamma_{12}})$
where $\beta_{12}=u_1-u_2$, $\gamma_{12}=u_1+u_2$.
Over the affine chart $\Spec\Z[\frac{x_1}{x_2},x_1x_2]$
corresponding to the cone $\langle\frac{1}{2}(v_1-v_2),
\frac{1}{2}(v_1+v_2)\rangle_{\Q_{\geq 0}}$ for the set of
simple roots $\beta_{12},\gamma_{12}$ this data has the property
$(t_{\beta_{12}}:t_{-\beta_{12}})\neq(1:0)$,
$(t_{\gamma_{12}}:t_{-\gamma_{12}})\neq(1:0)$
(see \cite[Rem.\ 1.21]{BB11}).
We study the fibres of $X(D_3)\to X(D_2)$ over this affine chart.
Over the dense torus we have a $\P^1$, over the torus orbit
corresponding to $\frac{1}{2}(v_1-v_2)$
(resp.\ $\frac{1}{2}(v_1+v_2)$)
we have chains of two $\P^1$ of combinatorial type
$s_1^+s_2^-|s_1^-s_2^+$ (resp.\ $s_1^+s_2^+|s_1^-s_2^-$).
Over the torus fixed point corresponding to the cone
$\langle\frac{1}{2}(v_1-v_2),\frac{1}{2}(v_1+v_2)\rangle_{\Q_{\geq 0}}$
we have $D_2$-data of the form
$(t_{\beta_{12}}:t_{-\beta_{12}})=(0:1)$,
$(t_{\gamma_{12}}:t_{-\gamma_{12}})=(0:1)$
and the fibre decomposes into three irreducible components
$\P^1,\P^1\times\P^1,\P^1$.

\noindent
\begin{picture}(150,45)(0,0)
\put(45,15){\line(1,0){40}}
\put(105,25){\line(-1,0){40}}
\put(55,20){\dottedline{1}(0,0)(40,0)}
\put(46,13){\line(2,1){28}}
\put(104,27){\line(-2,-1){28}}
\put(61,13){\dottedline{1}(0,0)(28,14)}
\put(50,20){\line(0,-1){20}}
\put(100,20){\line(0,1){20}}
\put(104,38){\makebox(0,0)[l]{$\P^1$}}
\put(104,33){\makebox(0,0)[l]{\Small$(z_{\alpha_1^-}:z_{-\alpha_1^-})$}}
\put(27,7){\makebox(0,0)[l]{$\P^1$}}
\put(27,2){\makebox(0,0)[l]{\Small$(z_{\alpha_1^+}:z_{-\alpha_1^+})$}}
\put(52,35){\makebox(0,0)[l]{$\P^1\times\P^1$}}
\put(52,30){\makebox(0,0)[l]{\Small$(z_{\alpha_2^-}:z_{-\alpha_2^-}),
(z_{\alpha_2^+}:z_{-\alpha_2^+})$}}
\put(62,13){\makebox(0,0)[t]{\small$s_2^-$}}
\put(96,20){\makebox(0,0)[t]{\small$s_2^+$}}
\filltype{white}
\put(50,2){\circle*{1.2}}
\put(51,1){\makebox(0,0)[l]{\small$s_-$}}
\put(100,38){\circle*{1.2}}
\put(99,37){\makebox(0,0)[r]{\small$s_+$}}
\filltype{black}
\put(50,8){\circle*{1.2}}
\put(51,8){\makebox(0,0)[l]{\small$s_1^+$}}
\put(100,32){\circle*{1.2}}
\put(99,32){\makebox(0,0)[r]{\small$s_1^-$}}
\end{picture}
\end{ex}

The general case can be studied using the same methods, see also
the $B_n$-case and in particular proposition \ref{prop:combtypeBn},
here details will be left to the reader. We \linebreak
define $Z\subset X(D_n)$
to be the union of the closures of torus orbits corresponding to
the $2$-dimensional cones of the form
$\langle\frac{1}{2}(\varepsilon_1v_{i_1}+\ldots+
\varepsilon_{n-1}v_{i_{n-1}}+\varepsilon_{i_n}v_{i_n}),\linebreak
\frac{1}{2}(\varepsilon_1v_{i_1}+\ldots+\varepsilon_{n-1}v_{i_{n-1}}
-\varepsilon_nv_{i_n})\rangle_{\Q_{\geq0}}$.

\pagebreak
\begin{prop}
Over $X(D_n)\setminus Z$ the fibres of the morphism $X(D_{n+1})\to
X(D_n)$ are chains of projective lines of odd or even length with
sections $s_i^\pm$. The combinatorial types of the fibres over the
torus orbits corresponding to one-dimensional cones are as follows:
\[
\begin{array}{lc}
\varepsilon_{i_1}v_{i_1}&s_{i_1}^{\varepsilon_1}|s_{i_2}^\pm
\cdots s_{i_n}^\pm|s_{i_1}^{-\varepsilon_1}\\
\varepsilon_{i_1}v_{i_1}+\varepsilon_{i_2}v_{i_2}&\;\;
s_{i_1}^{\varepsilon_1}s_{i_2}^{\varepsilon_2}|s_{i_3}^\pm\cdots
s_{i_n}^\pm|s_{i_2}^{-\varepsilon_2}s_{i_1}^{-\varepsilon_1}\\
\vdots&\vdots\\
\varepsilon_{i_1}v_{i_1}+\ldots+\varepsilon_{i_{n-2}}v_{i_{n-2}}&\;\;\;
s_{i_1}^{\varepsilon_1}\cdots s_{i_{n-2}}^{\varepsilon_{n-2}}|
s_{i_{n-1}}^\pm s_{i_n}^\pm|s_{i_{n-2}}^{-\varepsilon_{n-2}}\cdots
s_{i_1}^{-\varepsilon_1}\\
\frac{1}{2}(\varepsilon_{i_1}v_{i_1}+\ldots
+\varepsilon_{i_{n-2}}v_{i_{n-2}}+\varepsilon_{i_{n-1}}v_{i_{n-1}}
+\varepsilon_{i_{n}}v_{i_{n}})\hspace{-1cm}&\;\;\;\;
s_{i_1}^{\varepsilon_1}\cdots s_{i_{n}}^{\varepsilon_{n}}|
s_{i_{n}}^{-\varepsilon_{n}}\cdots s_{i_1}^{-\varepsilon_1}\\
\frac{1}{2}(\varepsilon_{i_1}v_{i_1}+\ldots
+\varepsilon_{i_{n-2}}v_{i_{n-2}}+\varepsilon_{i_{n-1}}v_{i_{n-1}}
-\varepsilon_{i_{n}}v_{i_{n}})\hspace{-1cm}&\;\;\;\;
s_{i_1}^{\varepsilon_1}\cdots s_{i_{n}}^{-\varepsilon_{n}}|
s_{i_{n}}^{\varepsilon_{n}}\cdots s_{i_1}^{-\varepsilon_1}\\
\end{array}
\]
Over $Z$ the fibres are $2$-dimensional and decompose into
irreducible components isomorphic to $\P^1$ and $\P^1\times\P^1$
intersecting transversally.
We have a central component $\P^1\times\P^1$ with action of $I$ that
interchanges two torus fixed points and leaves the other two fixed.
Further, we have chains of $\P^1$ emanating from the two torus fixed
points of $\P^1\times\P^1$ interchanged by $I$ with the sections
$s_\pm$ on the outer components. Concerning the subschemes $s_i^\pm$,
each of them intersects only with one component, those intersecting
with one of the $\P^1$ locally are sections, one pair $s_i^-,s_i^+$
intersects with $\P^1\times\P^1$ as $(1:1)\times\P^1$, $\P^1\times(1:1)$.
\end{prop}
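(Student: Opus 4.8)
The plan is to argue as in the cases of root systems of type $A$ and $B$. One realises $X(D_{n+1})$ as the closed subscheme of $P(D_{n+1}/D_n)_{X(D_n)}=\big(\prod_{i=1}^n\P^1_{\{\pm\alpha_i^-\}}\times\prod_{i=1}^n\P^1_{\{\pm\alpha_i^+\}}\big)_{X(D_n)}$ cut out by the homogeneous equations $t_\beta z_{\alpha_2}z_{-\alpha_1}=t_{-\beta}z_{-\alpha_2}z_{\alpha_1}$ parametrised by the universal $D_n$-data, so that the fibre over $y\in X(D_n)$ is the subscheme of $(\P^1)^{2n}$ obtained by substituting for the $t$'s the $D_n$-data of $y$. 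As the morphism, the involution $I$ and the subschemes $s_i^\pm,s_\pm$ are all equivariant, it suffices to describe the fibre over a general point of each torus orbit $O_\sigma$ of $X(D_n)$; there, by \cite[Rem.\ 1.21]{BB11}, the datum $(t_\beta:t_{-\beta})$ equals $(0:1)$, $(1:0)$ or a torus value according as $\langle\beta,v\rangle$ is positive, negative or zero, for $v$ in the relative interior of $\sigma$.

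First I would analyse a single irreducible component of such a fibre. On it each factor $\P^1_{\{\pm\alpha\}}$ is constant equal to $z_\alpha=0$, or constant equal to $z_{-\alpha}=0$, or ``free''; when $\alpha,\alpha'$ are both free and $\alpha-\alpha'$ is a root of $D_n$ the corresponding equation identifies $\P^1_{\{\pm\alpha\}}$ with $\P^1_{\{\pm\alpha'\}}$ by a fixed isomorphism, whereas $\alpha_i^+-\alpha_i^-=2u_i$ is the only difference that is \emph{not} a root of $D_n$, leaving the two $\P^1$'s it would otherwise link unrelated. Tracing through the $A_2$-relations exactly as in Proposition~\ref{prop:emb-B_n-curve}, such a component is isomorphic to $\P^1$ unless its free coordinates include a pair $z_{\pm\alpha_i^+},z_{\pm\alpha_i^-}$ not joined to one another by any chain of these isomorphisms, in which case the component is a copy of $\P^1\times\P^1$ whose two factors are the $\alpha_i^+$- and the $\alpha_i^-$-direction. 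From the map of fans $\Sigma(D_{n+1})\to\Sigma(D_n)$ one reads off that the rays of $\Sigma(D_{n+1})$ spanned by $\sum_{i\in A}\varepsilon_iv_i$ with $|A|=n-1$ fall into the relative interiors of the $2$-dimensional cones defining $Z$, and that these are the only rays whose invariant divisors map onto a subvariety of codimension $2$ in $X(D_n)$; hence a $2$-dimensional component, equivalently a $2$-dimensional fibre, occurs exactly over $Z$, and over $X(D_n)\setminus Z$ every fibre is a chain of projective lines. For a one-dimensional cone $\sigma$ generated by $v$ the preorder ``$\alpha_i^\pm$ to the left of $\alpha_j^\pm$'' is total, and computing $\langle\beta,v\rangle$ on the roots $\beta_{ij},\gamma_{ij}$ as in Proposition~\ref{prop:combtypeBn} — using that $2u_i$ is not a root to see how the $\alpha^+$'s and $\alpha^-$'s get distributed over the components at the ``half'' rays $v=\tfrac12\sum\varepsilon_{i_j}v_{i_j}$ — gives the combinatorial types in the statement.

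It remains to treat the fibres over $Z$, for which by the $W(B_n)$-action it suffices to consider the open orbit $O_\tau$ of the cone $\tau=\langle\tfrac12(v_1+\dots+v_{n-1}+v_n),\tfrac12(v_1+\dots+v_{n-1}-v_n)\rangle$. There $(t_{\beta_{ij}}:t_{-\beta_{ij}})$ is a torus value for $1\le i<j\le n-1$ and $(t_\beta:t_{-\beta})=(0:1)$ for every other positive root $\beta$ of $D_n$; substituting, one checks that the equations hold identically on the locus where $z_{\alpha_i^+}=0$ and $z_{-\alpha_i^-}=0$ for $i\le n-1$ while $z_{\alpha_n^\pm},z_{-\alpha_n^\pm}$ remain free, so that this locus $\cong\P^1_{\{\pm\alpha_n^+\}}\times\P^1_{\{\pm\alpha_n^-\}}$ is a component — the central component — while the remaining components are governed by the $A_{n-2}$-data $(t_{\beta_{ij}})_{i<j\le n-1}$ on $u_1,\dots,u_{n-1}$, attached transversally to the central one at its torus fixed points $\big((1:0),(1:0)\big)$ and $\big((0:1),(0:1)\big)$ and carrying $s_-$ respectively $s_+$ on their outer components (a single $\P^1$ on each side over $O_\tau$, chains of $\P^1$'s over the deeper strata of $\overline{O_\tau}$). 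The involution $I$, exchanging $\P^1_{\{\pm\alpha_i^+\}}\leftrightarrow\P^1_{\{\pm\alpha_i^-\}}$ and $z_{\alpha_i^+}\leftrightarrow z_{-\alpha_i^-}$, interchanges the two side chains and, on the central $\P^1\times\P^1$, swaps its two factors — thereby exchanging the fixed points $\big((1:0),(1:0)\big)$ and $\big((0:1),(0:1)\big)$ and fixing the other two; the pair $s_n^+,s_n^-$ meets the central component as $\{(1:1)\}\times\P^1$ and $\P^1\times\{(1:1)\}$, while each $s_i^\pm$ with $i\le n-1$ is a (local) section on a $\P^1$-component. For a general point of a deeper orbit of $Z$ the same substitution, now with some torus values of $(t_{\beta_{ij}})$ replaced by $(0:1)$ or $(1:0)$, only degenerates the side $\P^1$'s into chains while leaving the central $\P^1\times\P^1$ and its decorations unchanged, which is the asserted picture.

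The main obstacle is this last step: checking carefully from the specialised equations that a fibre over $Z$ has precisely one $\P^1\times\P^1$ component, sitting as described with only nodal intersections and with $s_n^\pm$ in the stated position — in effect isolating the $D_2\cong A_1\times A_1$ factor inside a degenerate $D_n$-fibre. The remainder is a routine transcription of Propositions~\ref{prop:emb-B_n-curve} and \ref{prop:combtypeBn} and Lemma~\ref{le:B_n-data--B_n-curve}, together with the elementary fan computation locating $Z$.
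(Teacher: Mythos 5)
Your proposal takes essentially the same route the paper intends (and largely leaves to the reader): embed $X(D_{n+1})$ in $P(D_{n+1}/D_n)_{X(D_n)}$, specialise the universal $D_n$-data orbit by orbit via \cite[Rem.\ 1.21]{BB11}, use that $\alpha_i^+-\alpha_i^-=2u_i$ is the unique non-root difference to isolate the $\P^1\times\P^1$ component, and read off $Z$ and the combinatorial types from the fan projection as in the $B_n$-case and the $X(D_3)/X(D_2)$ example; the key verifications you outline are correct. The only nitpick is that the rays $\sum_{i\in A}\varepsilon_iv_i$ with $|A|=n-1$ land in the interiors of the $2$-dimensional cones defining $Z$ only when $A\subseteq\{1,\ldots,n\}$ (if $n+1\in A$ the image is again a ray of $\Sigma(D_n)$), which does not affect your conclusion.
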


\begin{rem}
The combinatorial type of fibres of $X(R_{n+1})$ over the torus fixed
points of $X(R_{n})$ can be pictured in form of the Dynkin
diagram of the root system $R_{n+1}$ such that a component $\P^1$
with one section corresponds to a vertex.\\
In the $A_n$-case (see \cite{BB11}) we have a string starting with the
section $s_{i_1}$ on the component containing $s_-$ and ending
with the section $s_{i_{n+1}}$ on the component containing $s_+$
in the form of the Dynkin diagram for the root system $A_{n+1}$:

\noindent
\begin{picture}(150,10)(0,0)
\put(20,4){\makebox(0,0)[c]{$s_{i_1}$}}
\put(25,4){\line(1,0){10}}
\put(40,4){\makebox(0,0)[c]{$s_{i_2}$}}
\put(45,4){\line(1,0){10}}
\put(60,4){\makebox(0,0)[c]{$s_{i_3}$}}
\put(65,4){\line(1,0){10}}
\put(90,4){\makebox(0,0)[c]{$.\quad.\quad.$}}
\put(105,4){\line(1,0){10}}
\put(121,4){\makebox(0,0)[c]{$s_{i_{n+1}}$}}
\end{picture}

\noindent
In the $B_n$-case, because of the involution $I$, it suffices to
consider the central component containing the section $s_0$ and
one of the two chains emanating from the central component. This
forms a Dynkin diagram of type $B_{n+1}$:

\noindent
\begin{picture}(150,10)(0,0)
\put(20,4){\makebox(0,0)[c]{$s_0$}}
\put(25,3.5){\line(1,0){10}}
\put(25,4.5){\line(1,0){10}}
\put(29,4){\line(1,1){2}}\put(29,4){\line(1,-1){2}}
\put(40,4){\makebox(0,0)[c]{$s_{i_1}^{\varepsilon_1}$}}
\put(45,4){\line(1,0){10}}
\put(60,4){\makebox(0,0)[c]{$s_{i_2}^{\varepsilon_2}$}}
\put(65,4){\line(1,0){10}}
\put(90,4){\makebox(0,0)[c]{$.\quad.\quad.$}}
\put(105,4){\line(1,0){10}}
\put(121,4){\makebox(0,0)[c]{$s_{i_n}^{\varepsilon_n}$}}
\end{picture}

\noindent
In the $C_n$-case we have the double-section $S_0$ replacing the
section $s_0$:

\noindent
\begin{picture}(150,10)(0,0)
\put(20,4){\makebox(0,0)[c]{$S_0$}}
\put(25,3.5){\line(1,0){10}}
\put(25,4.5){\line(1,0){10}}
\put(31,4){\line(-1,1){2}}\put(31,4){\line(-1,-1){2}}
\put(40,4){\makebox(0,0)[c]{$s_{i_1}^{\varepsilon_1}$}}
\put(45,4){\line(1,0){10}}
\put(60,4){\makebox(0,0)[c]{$s_{i_2}^{\varepsilon_2}$}}
\put(65,4){\line(1,0){10}}
\put(90,4){\makebox(0,0)[c]{$.\quad.\quad.$}}
\put(105,4){\line(1,0){10}}
\put(121,4){\makebox(0,0)[c]{$s_{i_n}^{\varepsilon_n}$}}
\end{picture}

\noindent
Finally, in the $D_n$-case we can take the torus invariant divisors
in the central component $\P^1\times\P^1$ and their intersection with
the fibres of the schemes $s_i^\pm$. Together with the other components
we have a Dynkin diagram of type $D_{n+1}$:

\noindent
\begin{picture}(150,20)(0,0)
\put(20,14){\makebox(0,0)[c]{$s_{i_1}^+$}}
\put(20,2){\makebox(0,0)[c]{$s_{i_1}^-$}}
\put(25,14){\line(2,-1){10}}
\put(25,2){\line(2,1){10}}
\put(40,8){\makebox(0,0)[c]{$s_{i_2}^{\varepsilon_2}$}}
\put(45,8){\line(1,0){10}}
\put(60,8){\makebox(0,0)[c]{$s_{i_3}^{\varepsilon_3}$}}
\put(65,8){\line(1,0){10}}
\put(90,8){\makebox(0,0)[c]{$.\quad.\quad.$}}
\put(105,8){\line(1,0){10}}
\put(121,8){\makebox(0,0)[c]{$s_{i_n}^{\varepsilon_n}$}}
\end{picture}
\end{rem}

\pagebreak

\bigskip

\end{document}